\definecolor{red}{rgb}{0.7,0.15,0.15}
\definecolor{green}{rgb}{0,0.5,0}
\definecolor{blue}{rgb}{0,0,0.7}
\newtheorem{theorem}{Theorem}[section]
\newtheorem{definition}[theorem]{Definition}
\newtheorem{proposition}[theorem]{Proposition}
\newtheorem{assumption}[theorem]{Assumption}
\newtheorem{lemma}[theorem]{Lemma}
\newtheorem{remark}[theorem]{Remark}
\newtheorem{example}[theorem]{Example}
\newcommand{\la}{\langle}
\newcommand{\ra}{\rangle}
\def\ch{\textsc{h}}
\def\dis{\displaystyle}
\def\a{\alpha}
\def\b{\beta}
\def\g{\gamma}
\def\e{\varepsilon}
\def\l{\lambda}
\def\si{\sigma}
\def\t{\tau}
\def\f{\varphi}
\def\th{\theta}
\def\o{\omega}
\def\G{\Gamma}
\def\D{\Delta}
\def\Th{\Theta}
\def\F{\Phi}
\def\O{\Omega}
\def\no{\noindent}
\def\ms{\medskip}
\def\q{\quad}
\def\pa{\partial}
\def\cd{\cdot}
\def\cds{\cdots}
\def\cA{{\cal A}}
\def\cF{{\cal F}}
\def\cJ{{\cal J}}
\def\cP{{\cal P}}
\def\cS{{\cal S}}
\def\dbE{\mathbb{E}}
\def\dbF{\mathbb{F}}
\def\dbL{\mathbb{L}}
\def\dbN{\mathbb{N}}
\def\dbP{\mathbb{P}}
\def\dbR{\mathbb{R}}
\def\dbS{\mathbb{S}}
\def \D{{\bf{D}}}
\def \E{\mathbb{E}}
\def \F{\mathbb{F}}
\def \H{\mathbb{H}}
\def \P{\mathbb{P}}
\def \R{\mathbb{R}}
\def \G{\mathbb{G}}
\newcommand{\ol}{\overline}
\newcommand{\ul}{\underline}
\def\Ac{{\cal A}}
\def\Fc{{\cal F}}
\def\Gc{{\cal G}}
\def\Mc{{\cal M}}
\def\Nc{{\cal N}}
\def\Pc{{\cal P}}
\def\no{\noindent}
\def\={\;=\;}
\def\.{\;.}
\def\eps{\varepsilon}
\def\reff#1{{\rm(\ref{#1})}}
\def\1{{\bf 1}}
\def\eps{\epsilon}
\def\b*{\begin{eqnarray*}}
\def\e*{\end{eqnarray*}}
\title{\bf Zero--sum path--dependent stochastic differential games in weak formulation}
\author{Dylan {\sc Possama\"i}\footnote{Columbia University, IEOR, 500W 120th St., 10027 New York, NY, dp2917@columbia.edu. Research supported by the ANR project PACMAN ANR-16-CE05-0027.}   \and
Nizar {\sc Touzi}\footnote{CMAP, Ecole Polytechnique Paris, nizar.touzi@polytechnique.edu. Research supported by ANR the Chair {\it Financial Risks} of the {\it Risk Foundation} sponsored by Soci\'et\'e G\'en\'erale, and the Chair {\it Finance and Sustainable Development} sponsored by EDF and Calyon. }
       \and Jianfeng {\sc Zhang}\footnote{University of Southern California, Department of Mathematics, jianfenz@usc.edu. Research supported in part by NSF grant DMS 1413717.}}
\begin{document}

\maketitle

\begin{abstract}
We consider zero sum stochastic differential games with possibly path--dependent controlled state. Unlike the previous literature, we allow for weak solutions of the state equation so that the players' controls are automatically of feedback type. Under some restrictions, needed for the {\it a priori} regularity of the upper and lower value functions of the game, we show that the game value exists when both the appropriate path--dependent Isaacs condition, and the uniqueness of viscosity solutions of the corresponding {path--dependent} Isaacs--HJB equation hold. We also provide a general verification argument and a characterisation of saddle--points by means of an appropriate notion of second--order backward SDE.

\vspace{0.8em}
\noindent{\bf Key words:} Stochastic differential games, viscosity solutions of path--dependent PDEs, second--order backward SDEs.

\vspace{5mm}

\noindent{\bf AMS 2000 subject classifications:}  35D40, 35K10, 60H10, 60H30.

\end{abstract}

\section{Introduction}
\label{sect:introduction}
\setcounter{equation}{0}

Stochastic differential games have attracted important attention during the last three decades. Due to the crucial role of the information structure, the corresponding literature is technically and conceptually more involved than standard stochastic control. It had been recognised as early as in the 60s, in a series of papers by Varaiya \cite{varaiya1967existence}, Roxin \cite{roxin1969axiomatic} and Elliot and Kalton \cite{elliot1972existence}, in the context of deterministic differential games, that having both players play a classical control generally led to ill--posed problems, and that the appropriate notion was rather that of a strategy, that is to say that a given player uses a non--anticipative map from the other player's set of controls to his own set of controls. Earlier definitions of value functions for games require appropriate approximations procedures, by discrete--time games in Fleming's definition \cite{fleming1957note,fleming1961convergence,fleming1964convergence}, or by discretising the players's actions in Friedman's definition \cite{friedman1970definition,friedman1971differential} (see also Varaiya and Lin \cite{varaiya1969existence} for an earlier related notion). This makes the whole approach technically cumbersome. 

\vspace{0.5em}

The connexion between the value function of the game and the corresponding Hamilton--Jacobi--Isaacs partial differential equation was formally established by Isaacs \cite{isaacs1954differential,isaacs1965differential} in the 50s. The Elliot--Kalton definition induces an easy argument to prove rigorously this connexion by using the notion of viscosity solutions, see Evans and Souganidis \cite{evans1984differential2}, as well as the generalisation by Evans and Ishii \cite{evans1984differential}\footnote{Notice that value functions in Friedman's and Fleming's definitions were also proved to be related to the HJI PDE by Souganidis \cite{souganidis1983approximation} and Barron, Evans and Jensen \cite{barron1984viscosity}.}. The Elliot--Kalton strategies have been successfully generalised to the context of stochastic differential games by Fleming and Souganidis \cite{fleming1989existence}, where the above mentioned strategy map is restricted to adapted controls. Despite the asymmetry between the two players of the induced game problem formulation, this approach has been followed by an important strand of the literature in continuous--time stochastic differential games, see notably the revisits of Buckdahn and Li \cite{buckdahn2008stochastic} or Fleming and Hern\'andez-Hern\'andez \cite{fleming2011value}. 

\vspace{0.5em}

{\color{black}
This approach has some important drawbacks, however.  Besides some stemming from practical considerations, see Remark \ref{rem-fleming1989existence1}, the asymmetry between the players 
makes the problem of existence of saddle--points much harder in general. This justified the recent emergence of several alternative formulations of the game. While the corresponding results may look similar at first sight, these reformulations have very subtle differences. We shall devote Section \ref{sect:formulation} completely to
 an incremental presentation of the different formulations which appeared in the literature, with appropriate examples highlighting the main differences. In particular, when the diffusion coefficient is not controlled by any of the players, the problem is completely addressed in Hamad\`ene and Lepeltier \cite{hamadene1995zero}, see Subsection \ref{sect:hamadene1995zero}. However, when the diffusion coefficient is also controlled, all the results in the literature require the control/strategy to be simple in some sense. This constraint makes it essentially impossible to obtain the existence of saddle points under those formulations. 
}

\vspace{0.5em}

The main contribution of this paper is to show that considering stochastic differential games in weak formulation allows to bypass major difficulties pointed out in the previous literature. In our setting, introduced in Section \ref{sect:setting}, the controlled state process is a weak solution of the possibly path--dependent stochastic differential equation
 \[
 \mathrm{d}X^{\alpha}_t
 =
 b_t\big(X^{\alpha},\alpha^0_t,\alpha^1_t\big) \mathrm{d}t
 + \sigma_t\big(X^{\alpha},\alpha^0_t,\alpha^1_t\big) \mathrm{d}W_t,
 \]
where $W$ is a Brownian motion with appropriate dimension, $b$ and $\sigma$ are non--anticipating functions of the path, and $\alpha=(\alpha^0,\alpha^1)$ is the pair of controls of Players $0$ and $1$, respectively. We consider the largest set of controls $\alpha^i:[0,T]\times C^0([0,T])\longmapsto A^i$, $i=0,1$, by only assuming the natural non--anticipativity and measurability properties. In particular, we do not impose that they are simple in some sense so as to guarantee existence of a strong solution for the above state equation. Again, our approach is to consider weak solutions, without requiring uniqueness of such a solution.

\vspace{0.5em}
Our first main result, reported in Theorem \ref{thm-gamevalue}, states that, under the path--dependent Isaacs condition, uniqueness of viscosity solutions implies existence of the game value. Section \ref{sect-Vt} contains the technical arguments to prove this result, following the dynamic programming arguments as in Pham and Zhang \cite{pham2014two}. Our proof relies on the notion of path--dependent viscosity solutions, introduced by Ekren, Touzi and Zhang \cite{ekren2016viscosity, ekren2012viscosity}. Observe that this result covers the Markovian setting under the uniqueness condition of viscosity solutions in the standard sense of Crandall and Lions \cite{crandall1983viscosity}, as our set of test functions includes theirs. As our technique requires some {\it a priori} regularity for the game upper and lower values, Theorem \ref{thm-gamevalue} is established under restricting conditions on the coefficients $b$ and $\sigma$ which are essentially summarised in Assumption \ref{assum-bsi}, see also Section \ref{sect-extension} for a slight weakening of these conditions. Notice that the remarkable work of S\^irbu \cite{sirbu2014stochastic,sirbu2015asymptotic} does not need any such restrictions, as the Perron--like method this author uses allows to bypass the task of deriving directly the dynamic programming principle. However, the method is restricted to the Markovian setting, and the players controls are simple and thus much less general than the ones we consider here.

\vspace{0.5em}
As a second main result reported in Theorem \ref{thm-verification}, we provide a verification argument, still under the path-dependent Isaacs condition, including a characterisation of saddle--points. We emphasise that when the volatility of the diffusion is degenerate, this result is new, even in the special Markovian setting, as the value function of the game may fail to lie in the standard Sobolev spaces, due to possible non--existence of a density of the corresponding state equation. By further considering a convenient relaxation, we also provide in Theorem \ref{thm:2bsde} a characterisation by means of an appropriate notion of second--order backward SDE, which plays the same role as the Sobolev--type solution for the corresponding Hamilton--Jacobi-Bellman--Isaacs (HJBI for short) partial differential equation.

\vspace{0.5em}
\noindent {\bf Notations:} Throughout the paper, for $i=0, 1$, we assume that the control of Player $i$ takes values in $A_i\subset \dbR^{d_i}$, for some arbitrary integer $d_i$. We define $A := A_1 \times A_2$, and denote typically the elements of $A$ as $a = (a_0, a_1)$.  Throughout this paper, for every $p-$dimensional vector $b$ with $p\in \mathbb{N}$, we denote by $b^{1},\ldots,b^{p}$ its entries, for $1\leq i\leq p$. For $\alpha,\beta \in \R^p$ we denote by $\alpha\cdot \beta$ the usual inner product, with associated norm $|\cdot|$. For any $(\ell,c)\in\mathbb N \times\mathbb N $, $\mathcal M_{\ell,c}(\mathbb R)$ denotes the space of $\ell\times c$ matrices with real entries. The elements of  matrix $M\in\mathcal M_{\ell,c}$ are denoted  $(M^{i,j})_{1\leq i\leq \ell,\ 1\leq j\leq c}$, and the transpose of $M$ is denoted by $M^\top$. We identify $\mathcal M_{\ell,1}$ with $\R^\ell$. When $\ell=c$, we let $\mathcal M_{\ell}(\mathbb R):=\mathcal M_{\ell,\ell}(\mathbb R)$. We also denote by $\dbS^\ell$ (resp. $\dbS^\ell_+$) 
the set of symmetric (resp. symmetric semi--definite positive) matrices in $\Mc_{\ell}(\R)$. The trace of a matrix $M\in\mathcal M_\ell(\R)$ will be denoted by ${\rm Tr}[M]$. {\color{black}For further reference, we list here all the filtrations that will be used throughout the paper. For any filtration $\mathbb G:=(\Gc_t)_{0\leq t\leq T}$, and for any probability measure $\P$ on our space $(\O,\Fc)$, we denote by $\G^\P:=(\Gc^\P_t)_{0\leq t\leq T}$ the usual $\P-$augmentation\footnote{\color{black}The $\P-$augmentation is defined for any $t\in[0,T]$ by $\Gc^\P_t:=\sigma\big(\Gc_t\cup \mathcal N^\P\big)$, where $\mathcal N^\P:=\big\{A\subset\Omega,\; A\subset B,\; \text{with}\; B\in\Fc_T,\; \P[B]=0\big\}.$} of $\G$, and by $\G^+:=(\Gc_t^+)_{0\leq t\leq T}$ the right--limit of $\G$. Similarly, the right limit of $\G^\P$ will be denoted by $\G^{\P+}:=(\Gc^{\P+}_t)_{0\leq t\leq T}$. For technical reasons, we also need to introduce the universal filtration $\G^{U} := \big(\Gc^{U}_t \big)_{0 \le t \le T}$ defined by $\mathcal G^{U}_t:=\cap_{\P \in \mbox{\tiny Prob}(\Omega)}\Gc_t^{\P}$, $t\in[0,T]$, where ${\rm Prob}(\O)$ is the set of all probability measures on $(\O,\Fc)$, }and we denote by {\color{black}$\G^{U+}$}, the corresponding right--continuous limit. Moreover, for a subset $\Pc\subset \mbox{Prob}(\Omega)$, we introduce the set of $\Pc-$polar sets $\mathcal N^{\Pc}:=\big\{N\subset\Omega: N\subset A$ for some $A\in\mathcal F_T$ with $\sup_{\P\in\Pc}\P(A)=0\big\}$, and we introduce the $\Pc-$completion of $\G$, $\G^{\Pc}:=\left(\Gc^{\Pc}_t\right)_{t\in[0,T]},$ with $\Gc^{\Pc}_t:=\Gc^{U}_t\vee\sigma\left(\Nc^{\Pc}\right),$ $t\in[0,T],$ together with the corresponding right--continuous limit $\G^{\Pc+}$.

\section{Stochastic differential game formulations and examples}
\label{sect:formulation}
\setcounter{equation}{0}

In this section we introduce the main formulations of zero--sum stochastic differential games from the existing literature, and explain through several examples why we have chosen to concentrate our attention on the "weak formulation with control against control".  {\color{black}The section is somewhat lengthy. However, due to the subtleties involved in the  formulations, we think such a detailed introduction will prove helpful for our readers.}

\subsection{Strong formulation with control against control}
\label{sect:strong}
Fix some time horizon $T>0$. In the strong formulation paradigm, a filtered probability space $(\O, \mathcal F,\dbF:=(\mathcal F_t)_{0\leq t\leq T}, \dbP_0)$, on which is defined a $d-$dimensional Brownian motion $W$, is fixed. We denote by $\mathbb F^W$ the natural filtration of $W$, augmented under $\mathbb P_0$, and for $i=0,1$, we let $\cA^i_{\rm S}$ denote the set of $\dbF^W-$progressively measurable $A_i-$valued processes, and  $\cA_{\rm S}:= \cA^0_{\rm S}\times \cA^1_{\rm S}$.  Throughout the paper, we take the notational convention that we write $i$ as subscript for deterministic objects and as superscript for random objects. Consider then, for $i=0,1$ the following $n-$dimensional controlled state processes with controls $\a := (\a^0,\a^1)\in \cA_{\rm S}$
\begin{equation}
\label{Strong-X}
X^{\a}_t := \int_0^t b(s, X^\a_s, \a_s)  \mathrm{d}s+ \int_0^t \si(s, X^\a_s, \a_s)  \mathrm{d}W_s,\; t\in[0,T],\; \dbP_0-\mbox{a.s.}
\end{equation}
where $b:[0,T]\times\mathbb R^n\times A\longrightarrow\mathbb R^n$ and $\sigma: [0,T]\times\mathbb R^n\times A\longrightarrow\mathbb R^{n\times d}$ are appropriate Borel measurable functions so that the above SDE has a unique strong solution for any $\a \in \cA_{\rm S}$.
We introduce the so--called upper and lower values of the game
\begin{eqnarray}
\label{Strong-V0}
\overline{V}^{\rm S}_0 := \inf_{\a^0\in \Ac_{\rm S}^0} \sup_{\a^1\in \cA_{\rm S}^1} J_{\rm S}(\a^0,\a^1),
&\mbox{and}&
\underline{V}^{\rm S}_0 := \sup_{\a^1\in \cA_{\rm S}^1} \inf_{\a^0\in \cA_{\rm S}^0}  J_{\rm S}(\a^0,\a^1),
\end{eqnarray}
where the criterion of the players $J_{\rm S}$ is defined, for some appropriate functions $f: [0,T]\times\mathbb R^n\times A\longrightarrow\mathbb R$ and $g:\mathbb R^n\longrightarrow\mathbb R$, by
\begin{equation}
\label{Strong-J}
J_{\rm S}(\a) := \dbE^{\dbP_0}\bigg[g(X^\a_T) + \int_0^T f(t, X^\a_t, \a_t) \mathrm{d}t\bigg].
\end{equation}
It is clear by definition that $\ul V^{\rm S}_0 \le \ol V^{\rm S}_0$. There are two central problems for the game defined above:
\begin{itemize}
\item[$(i)$] Does the game value exists, namely $\ol V^{\rm S}_0 = \ul V^{\rm S}_0$?

\item[$(ii)$] Is there a saddle--point (also called equilibrium) for the game? That is to say, can we find some $\widehat\a:= (\widehat \a^0, \widehat \a^1) \in  \cA_{\rm S}$ such that
\begin{equation}
\label{Strong-equilibrium}
 J_{\rm S}(\widehat \a^0, \a^1) \le   J_{\rm S}(\widehat \a^0, \widehat \a^1) \le J_{\rm S}(\a^0, \widehat \a^1),~\mbox{for any}~ \a^0\in \cA_{\rm S}^0,\; \a^1 \in \cA_{\rm S}^1.
\end{equation}
\end{itemize}
Notice immediately that the existence of a saddle--point $\widehat \a$ implies automatically that the game value exists, and is equal to $J_S(\widehat \a)$.
 
\vspace{0.5em}

Despite the fact that the above formulation is very close to the usual framework of stochastic control, it has never been considered in the literature, since even in seemingly benign situations, the game value may fail to exist.

\begin{example}
\label{eg-strong}{\rm This is a simplified version of an example borrowed from R. Buckdahn, see \cite[Appendix E]{pham2014two}.

\vspace{0.5em}
\noindent Let $A_0=A_1 = [-1,1]$, $d=n=2$, and $c\in\dbR$, $\rho \in [-1, 1]$  be two constants. Consider the following specification
\[f:=0,\; g(x):=\big|x_1-x_2\big|^2,\; b(t,x,a):=\begin{pmatrix}a_0\\ a_1\end{pmatrix},\; \sigma(t,x,a):=\begin{pmatrix}c & 0\\ c\rho & c\sqrt{1-\rho^2}\end{pmatrix}.\]
In this case, we have
\begin{equation*}
 X^{1,\a}_t := \int_0^t \a^0_s  \mathrm{d}s + c W^1_t,\; X^{2,\a}_t := \int_0^t \a^1_s  \mathrm{d}s + c \big[\rho W^1_t + \sqrt{1-\rho^2} W^2_t\big],\; J_{\rm S}(\a) := \dbE^{\dbP_0}\Big[|X^{1,\a}_T-X^{2,\a}_T|^2\Big].
\end{equation*}
Then, we claim that 
 \begin{eqnarray*}
 \ul V^{\rm S}_0 \;\le\; 2(1-\rho)c^2 T 
 &\mbox{and}& 
 T^2 \;\le\; \ol V^{\rm S}_0.
 \end{eqnarray*}
so that $\ul V^{\rm S}_0 < \ol V^{\rm S}_0$ whenever $2(1-\rho)c^2  < T$, and the game does not have a value in this formulation. To see this, notice that for any $\a^1 \in \cA_S^1$, if Player $0$ also plays the control $\alpha^1$, we have
\[
J_{\rm S}(\a^1, \a^1) = \dbE^{\dbP_0}\Big[\big|c(1-\rho) W^1_T - c\sqrt{1-\rho^2} W^2_T\big|^2\Big]  = 2(1-\rho)c^2T. 
\]
Thus $\inf_{\a^0\in \cA_{\rm S}^0} J_{\rm S}(\a^0, \a^1) \le 2(1-\rho)c^2T$, so that by arbitrariness of $\a^1\in \mathcal A_{\rm S}^1$, we have $\ul V^S_0\le 2(1-\rho)c^2T$. On the other hand, for any $\a^0\in \cA_{\rm S}^0$, set 
\[x_0 :=  \dbE^{\dbP_0}\bigg[\int_0^T \a^0_sds\bigg], \; {\rm sgn}(x_0) :=  \1_{\{x_0\ge 0\}} - \1_{\{x_0< 0\}}\in A_1,\; \a^1_t := - {\rm sgn}(x_0), \; t\in [0,T].\]
 Then by Jensen's inequality
\begin{align*}
 J_{\rm S}(\a^0, \a^1)  
 \ge  \Big|\dbE^{\dbP_0}\big[X^{1,\a}_T-X^{2,\a}_T\big]\Big|^2&= 
 \bigg|\dbE^{\dbP_0}\bigg[\int_0^T \a^0_s  \mathrm{d}s -\int_0^T\a^1_sds\bigg]\bigg|^2=
 \big|x_0 + T{\rm sgn}(x_0)\big|^2 \ge |T{\rm sgn}(x_0)|^2 = T^2.
\end{align*}
This implies that $\sup_{\a^1\in\cA_{\rm S}^1} J_{\rm S}(\a^0, \a^1)\ge T^2$ for any $\a^0\in\cA_{\rm S}^0$, and thus  $\ol V^{\rm S}_0\ge T^2$.\qed
 }
\end{example}

\vspace{0.5em}

{\color{black}
We recall that  the zero--sum game \reff{Strong-X}--\reff{Strong-J} is closely related to the following HJBI PDEs
\begin{equation}
\label{PDE1}
- \pa_t \ol v - \ol H(t, x,  D \ol v, D^2\ol v) =0,\;  - \pa_t \ul v - \ul H(t, x,D \ul v, D^2 \ul v) =0,
\end{equation}
where the Hamiltonians  $\ol H, \ul H$ are defined as:
 \begin{equation}
\label{Hamiltonian1}
\left.\begin{array}{c} 
\dis h(t,x,z,\g, a) :=  {\frac12} {\rm Tr}\big[(\si\si^\top)(t,x,a) \g\big] + b(t,x,a) \cd z+ f(t,x, a), \; (t,x,z,\gamma,a)\in[0,T]\times\R^d\times\mathbb R^d\times\mathbb S^d,\\[0.5em]
\dis \ol H(t,x,z,\g) := \inf_{a_0\in A_0} \sup_{a_1\in A_1} h(t,x,z,\g, a_0, a_1), \;
\ul H(t,x,z,\g) := \sup_{a_1\in A_1}\inf_{a_0\in A_0}  h(t,x,z,\g, a_0, a_1).
\end{array}\right.
\end{equation}
Moreover, the following Isaacs condition is crucial for the existence of the game value:
 \begin{equation}
\label{Isaacs1}
\ol H = \ul H =: H.
\end{equation}
Under the above condition, we say $(\hat a_0, \hat a_1)\in A$ is a saddle point of the Hamiltonian $H$ at $(t,x,z,\g)$ if
 \begin{equation}
\label{saddle1}
 h(t,x,z,\g, \hat a_0, a_1) \le H(t,x,z,\g) \le h(t,x,z,\g,  a_0,  \hat a_1)  \q\mbox{for all}\q a_0\in A_0, a_1\in A_1.
\end{equation}
}

\begin{remark}
\label{rem-novalue}
Direct calculation reveals that the Isaacs condition \reff{Isaacs1} holds in the  context of Example \ref{eg-strong}, with
\[
H(z,\g)  ={\frac{c^2}2} \big(\g_{11} + \g_{22} + 2\rho \g_{12}\big)+ |z_2|- |z_1|,\; (z,\g)\in \dbR^2\times \dbS^2.
\]
So the game value does not exist, despite the fact that Isaacs's condition holds. Notice as well that when $c=0$, this is a deterministic game, and when $c>0$ and $|\rho|<1$, $\si$ is non--degenerate. Thus potential degeneracy of the diffusion coefficient is not the reason for the non--existence of the game value.
\end{remark}

\subsection{Strong formulation with strategy against control}
\label{sect:fleming1989existence}

As we have seen above, naively considering games in a control against control formulation usually leads to non--existence of the game value. One way to properly formalise the fact that in continuous--time differential games the players also observe each other continuously consists in introducing the notion of non--anticipative strategies. Roughly speaking, in such a framework a strategy for one player is simply a non--anticipative map from the set of controls of the other player to the set of controls of this player. Though strategies were introduced in deterministic games by Varaiya \cite{varaiya1967existence}, Roxin \cite{roxin1969axiomatic} and Elliot and Kalton \cite{elliot1972existence}, the first work to extend this notion in a stochastic setting is due to Fleming and Souganidis \cite{fleming1989existence}. Let us now give a proper definition.
\begin{definition}
\label{fleming1989existence-strategy}
Let $\cS^0$ denote the set of mappings  ${\rm a}^0: \cA_S^1\longrightarrow \cA_S^0$ such that, for any $t\in [0,T]$, and any $(\a^1, \widetilde \a^1)\in \cA_S^1\times \Ac^1_S$ satisfying $\a^1 = \widetilde \a^1$, $ds\times d\dbP_0-${\rm a.s.} on $[0, t]\times \O$, we have ${\rm a}^0(\a^1) ={\rm a}^0(\widetilde \a^1)$, $ds\times d\dbP_0-${\rm a.s.} on $[0, t]\times \O$. Similarly we define $\cS^1$ as the set of appropriate mappings ${\rm a}^1: \cA_S^0\longrightarrow \cA_S^1$. 
\end{definition}

The upper and lower values in this formulation are then defined as
\begin{equation}
\label{fleming1989existence-V}
\ol V^{\rm FS}_0 := \sup_{{\rm a}^1 \in \cS^1} \inf_{\a^0 \in \cA_S^0}  J_{\rm S}(\a^0, {\rm a}^1(\a^0)),
\;
\ul V^{\rm FS}_0 := \inf_{{\rm a}^0 \in \cS^0} \sup_{\a^1 \in \cA_S^1}  J_{\rm S}({\rm a}^0(\a^1), \a^1).
\end{equation}
We emphasise that in this framework, the upper value is defined as a $\sup \inf$, rather than an $\inf \sup$. Besides, since the setting is by nature asymmetric, it is not {\it a priori} clear that 
\[ \ul V^{\rm FS}_0 \le  \ol V^{\rm FS}_0,\; \text{or}\;  \ol V^{\rm FS}_0 \le  \ul V^{\rm FS}_0.\]
Nevertheless, this formulation has been very successful in the existing literature because the game value is well understood, and characterised by the so--called Hamilton--Jacobi--Bellman--Isaacs PDE. The main result of Fleming and Souganidis \cite[Theorem 2.6]{fleming1989existence} is the following.
\begin{theorem}
\label{thm-fleming1989existence} 
Under appropriate technical conditions on the coefficients $b, \si, f, g$, we have  $\ol V^{\rm FS}_0 = \ol v(0,0)$, $\ul V^{\rm FS}_0 = \ul v(0,0)$, where $\ol v$, $\ul v$ are viscosity solutions of the corresponding {\rm HJBI} equations \reff{PDE1}. 
In particular, if Isaacs condition \reff{Isaacs1} holds, and the viscosity solution to the above {\rm PDEs} is unique, then $\ol V^{\rm FS}_0=\ul V^{\rm FS}_0$, and the game value exists.
\end{theorem}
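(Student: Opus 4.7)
The plan is to extend the static values $\overline V^{\rm FS}_0,\ul V^{\rm FS}_0$ into dynamic value functions $\ol v(t,x)$ and $\ul v(t,x)$ indexed by initial data $(t,x)\in[0,T]\times\R^n$, establish a dynamic programming principle (DPP) for each, and then turn the DPP into the corresponding viscosity PDE by a standard test function argument; once this is done, the final conclusion is immediate from the Isaacs condition \reff{Isaacs1} and the assumed uniqueness.

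First I would define, for each $t\in[0,T]$, the sets $\cA_S^i(t)$ of $\F^W$-progressive $A_i$-valued processes on $[t,T]$ and the corresponding non-anticipative strategy sets $\cS^i(t)$ as in Definition \ref{fleming1989existence-strategy}. Writing $X^{t,x,\alpha}$ for the unique strong solution of \reff{Strong-X} started from $x$ at $t$, I would set
\[
\ol v(t,x) := \sup_{{\rm a}^1\in\cS^1(t)}\inf_{\a^0\in\cA_S^0(t)} J_S(t,x;\a^0,{\rm a}^1(\a^0)),\quad \ul v(t,x) := \inf_{{\rm a}^0\in\cS^0(t)}\sup_{\a^1\in\cA_S^1(t)} J_S(t,x;{\rm a}^0(\a^1),\a^1),
\]
with $J_S(t,x;\a):=\dbE^{\P_0}[g(X^{t,x,\a}_T)+\int_t^T f(s,X^{t,x,\a}_s,\a_s)\,ds]$. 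Standard SDE moment estimates under Lipschitz assumptions on $b,\si,f,g$ yield the usual linear growth and Hölder-type continuity of $(t,x)\mapsto \ol v(t,x)$ and $(t,x)\mapsto \ul v(t,x)$; this regularity will be essential at the concatenation step.

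The core step is the DPP: for every $h\in(0,T-t)$,
\[
\ol v(t,x)=\sup_{{\rm a}^1\in\cS^1(t)}\inf_{\a^0\in\cA_S^0(t)}\dbE^{\P_0}\bigg[\int_t^{t+h} f(s,X^{t,x,\a}_s,\a_s)\,\mathrm ds+\ol v\big(t+h,X^{t,x,\a}_{t+h}\big)\bigg],
\]
and analogously for $\ul v$. One inequality is obtained by concatenating any strategy on $[t,t+h]$ with an $\e$-optimal strategy on $[t+h,T]$ at the endpoint $X^{t,x,\a}_{t+h}$; the other inequality by a symmetric concatenation. The serious technical point is making these concatenations measurable in $\o$: since strategies are non-anticipative maps between infinite-dimensional control spaces, one needs a countable partition of $\R^n$ (or rather of the support of $X^{t,x,\a}_{t+h}$), an $\e$-net of $\e$-optimal strategies indexed by that partition, and the continuity of $\ol v(t+h,\cdot)$ from Step 1 to control the resulting error as $\e\to0$. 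This measurable selection / concatenation argument is where I expect essentially all of the difficulty to lie.

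Given the DPP, the viscosity property follows by the usual test function argument: take $\vp\in C^{1,2}$ with $\ol v-\vp$ attaining a local maximum at $(t,x)$, plug $\vp(t+h,X^{t,x,\a}_{t+h})$ into the DPP, apply Itô's formula, divide by $h$, and pass to the limit as $h\downarrow 0$, exploiting the arbitrariness of ${\rm a}^1\in\cS^1(t)$ and $\a^0\in\cA_S^0(t)$ to produce, respectively, the subsolution inequality for $-\pa_t\vp-\ol H(t,x,D\vp,D^2\vp)$ and, by a symmetric argument, the supersolution inequality; the roles of $\inf$--$\sup$ switch to yield the $\ul H$-equation for $\ul v$. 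To conclude, if the Isaacs condition \reff{Isaacs1} holds then $\ol H=\ul H=H$, so $\ol v$ and $\ul v$ are both viscosity solutions of the same HJBI equation \reff{PDE1} with the same terminal datum $g$; the assumed comparison / uniqueness result then forces $\ol v\equiv \ul v$, and evaluation at $(0,0)$ gives $\ol V^{\rm FS}_0=\ul V^{\rm FS}_0$.
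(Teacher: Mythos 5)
The paper does not prove this theorem: it is recalled verbatim from Fleming and Souganidis \cite{fleming1989existence} (their Theorem 2.6) as background for Section \ref{sect:formulation}, so there is no internal proof to compare against. Your sketch is the textbook route to that result (dynamic value functions, DPP, test--function argument, comparison), and the final two steps — passing from the DPP to the viscosity inequalities via It\^o's formula, and invoking \reff{Isaacs1} plus uniqueness — are standard and correctly described.

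There are, however, two points where the sketch as written has genuine gaps rather than mere omitted details. First, with $\cA_S^i(t)$ defined as all $\dbF^W$--progressive processes on $[t,T]$, the quantity $\ol v(t,x)$ is a priori an $\cF_t$--measurable random variable, not a function of $(t,x)$: either one restricts controls and strategies to be adapted to the filtration generated by the increments $(W_s-W_t)_{s\ge t}$ (as in \cite{fleming1989existence}), or one allows dependence on the whole past and must then prove that the value is deterministic (the key lemma of Buckdahn and Li \cite{buckdahn2008stochastic}). Without one of these, the right-hand side of your DPP, which evaluates $\ol v(t+h,\cdot)$ at the random point $X^{t,x,\a}_{t+h}$, does not even typecheck. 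Second, you correctly locate the main difficulty in the measurable concatenation of $\e$--optimal strategies, but you do not resolve it, and it is precisely this step that fails to go through by the naive pasting argument in the strategy--against--control setting: the $\e$--optimal strategy chosen on $[t+h,T]$ must depend on $X^{t,x,\a}_{t+h}$, hence on the opponent's control on $[t,t+h]$, and one has to check that the concatenated object is still a non--anticipative strategy while controlling the error uniformly. Fleming and Souganidis circumvent this by approximating with discrete--time ($\pi$--admissible) games in the spirit of Fleming and Friedman; Buckdahn and Li replace it by a backward--semigroup (BSDE) formulation combined with the deterministic--value lemma. Declaring that ``this is where the difficulty lies'' identifies the obstruction but does not constitute a proof of the DPP, which is the heart of the theorem.
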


Notice that the approach of Fleming and Souganidis \cite{fleming1989existence} has been substantially improved and simplified by Buckdahn and Li \cite{buckdahn2008stochastic} (see also the works of Bouchard, Moreau and Nutz \cite{bouchard2014stochastic} and Bouchard and Nutz \cite{bouchard2015stochastic} for a similar approach in stochastic target games), who considered a similar framework, allowing for controls depending on the full past of the trajectories of $W$ (implying in particular that their cost functionals become random variables), and also for more general running cost functionals in the form of backward SDEs. Though their framework remains Markovian, a recent extension to non--Markovian dynamics has been proposed by Zhang \cite{zhang2017existence,zhang2017existence2}, relying on top of the BSDE method of Buckdahn and Li, on an approximation of the non--Markovian game by sequences of standard Markovian games.

\vspace{0.5em}

While the above results are beautiful mathematically, it has two major drawbacks, as illustrated in the following two remarks.
\begin{remark}
\label{rem-fleming1989existence1}
The strategies are typically difficult to implement in practice. 

\vspace{0.5em}
{\rm (i)} In the problem $\ol V^{\rm FS}_0$, Player $1$ needs to observe the control $\a^0$ of Player $0$. But since this is a zero--sum game, the players typically would not tell their competitors their controls, due to the so--called moral hazard. 

\vspace{0.5em}
{\rm (ii)}  Notice further that  the strategy ${\rm a}^1$ is a function of the whole process $\a^1$, rather than the paths of $\a^1$. This imposes further difficulty for the practical implementation of non--anticipative strategies. {\color{black}Even in the full information case $($without moral hazard$)$}, the players do not actually observe their opponent's adapted control, but just a realisation of this control in the actual state of the world.
\end{remark}

\begin{remark}
\label{rem-fleming1989existence2}
The study of the existence of saddle--points in this setting also proves very difficult. Among some of the reasons, we would like to highlight the following.

\vspace{0.5em}
{\rm (i)} The information is asymmetric in this setting. As a consequence, it is not possible to define saddle--points as conveniently as in the spirit of \eqref{Strong-equilibrium}.

\vspace{0.5em}
{\rm (ii)}  The problem $\ol V^{\rm FS}_0$ can be viewed as a zero--sum Stackelberg game, which requires to solve sequential optimisation problems. Given ${\rm a}^1$, it will in general be difficult to solve $\inf_{\a^0\in \cA_S^0} J_S(\a^0, {\rm a}^1(\a^0))$, since as a general strategy there are not many properties we can impose on ${\rm a}^1$. The optimisation over ${\rm a}^1$ can then become even harder. 

\vspace{0.5em}
{\rm (iii)}  Moreover, we emphasise that this formulation is still in a strong setting, namely all involved processes are required to be $\dbF^W-$progressively measurable. In this case, the set $\cA_S$ of admissible controls is typically not compact, meaning that saddle--points are even less likely to exist.
\end{remark}

We illustrate the above points by considering two examples where saddle--points cannot exist, no matter how one defines them. For this purpose, we borrow a function $\zeta$ from Barlow \cite{barlow1982one} which satisfies the following properties
\begin{itemize}
\item[$(1)$] $\zeta: \dbR\longrightarrow [1, 2]$ and is uniformly H\"older continuous.
\item[$(2)$] The following SDE admits a unique (in law) weak solution but no strong solution
\begin{equation}\label{BarlowSDE}
X_t = \int_0^t \zeta(X_s)  \mathrm{d}W_s,\; \dbP_0-\mbox{a.s.}
\end{equation}
\end{itemize}
\begin{example}
\label{eg-fleming1989existence}
Set $A_0 := [1, 2], A_1 := \{0\}$ and $d=1$. Consider the following specification
\[b(t,x,a):=0,\; \sigma(t,x,a):=|a_0|,\; g(x):=|x|^2,\; f(t,x,a):=\big|\zeta(x)\big|^2-2a_0\zeta(x).\]
We then have
\begin{equation*}
X^{\a}_t = \int_0^t |\a^0_s|  \mathrm{d}W_s,\; \dbP_0-\mbox{a.s.},\; J_{\rm S}(\a) := \dbE^{\dbP_0}\bigg[ |X^\a_T|^2 - \int_0^T \big[ 2\a^0_t \zeta(X^\a_t) -  |\zeta(X^\a_t)|^2\big]  \mathrm{d}t\bigg].
\end{equation*}
Then $\ol V^{\rm FS}_0=\ul V^{\rm FS}_0$, but there is no saddle--point in any appropriate sense. To see this, observe that $\cA^1_S$ consists of only the constant process $0$, and thus $\cS^1$ also consists only of the trivial mapping ${\rm a}^1 =0$. Then it is clear that
\begin{equation}
\ol V^{\rm FS}_0 = \ul V^{\rm FS}_0 =\inf_{\a^0\in \cA^0_S} J_{\rm S}(\a^0,0).
\end{equation}
This is a standard optimal control problem, and we know its value is $v(0,0)$, where $v$ is the unique viscosity solution to the following {\rm HJB} equation
\begin{equation}
-\pa_t v - \inf_{a_0\in A_0} \Big\{{\frac12} |a_0|^2 \pa^2_{xx} v  - 2a_0 \zeta(x) + |\zeta(x)|^2\Big\} =0,\; v(T,x) = x^2.
\end{equation}
One can check straightforwardly that $v(t,x) = x^2$ is the classical solution to the above {\rm PDE}. In particular, uniqueness for the last {\rm HJB} equation follows from the standard verification argument, and this implies that
\begin{equation}
\ol V^{\rm FS}_0 = \ul V^{\rm FS}_0 =\sup_{\a^0\in \cA^0_{\rm S}} J_{\rm S}( \a^0, 0)  = v(0,0) = 0.
\end{equation}

Now assume the game has a saddle--point in some appropriate sense, which will be associated to a certain $\widehat a^0 \in \cA_S^0$ and $\widehat \a^1=0$. Then, denoting $\widehat X := X^{\widehat\a^0, 0}$
\begin{equation}
0 = J_S(\widehat\a^0,0) = \dbE^{\dbP_0} \bigg[\int_0^T \big[|\widehat\a^0_t|^2  -2\widehat\a^0_t \zeta(\widehat X_t) +|\zeta(\widehat X_t)|^2\big] dt\bigg] =  \dbE^{\dbP_0} \bigg[\int_0^T\big |\widehat\a^0_t -\zeta(\widehat X_t)\big|^2  \mathrm{d}t\bigg] .
\end{equation}
This implies that necessarily $\widehat \a^0 = \zeta(\hat X)$, $\dbP_0-$a.s. In other words, $\widehat X$ must satisfy
\begin{equation*}
\widehat X_t = \int_0^t |\widehat \a_s| dW_s = \int_0^t \zeta\big(\widehat X_s\big) \mathrm{d}W_s,\; t\in[0,T],\; \dbP_0-\mbox{a.s.}
\end{equation*}
By Barlow {\rm \cite{barlow1982one}}, the above {\rm SDE} has no strong solution, which contradicts with our assumption that $\widehat X = X^{ \widehat\a^0,0}$ is $\dbF^W-$measurable.
\qed
\end{example}
 
The last example may seem very special, since the game problem is actually reduced to a stochastic control problem. The following example shows that similar concerns appear in a genuine game problem.

\begin{example}
\label{eg-fleming1989existence2}
Set $A_0:=[1,2]$, $A_1:=[0,1]$. Consider the following specification
\[b(t,x,a):=0,\; \sigma(t,x,a):=|a_0|,\; g(x):=|x|^2,\; f(t,x,a):=\big|\bar\zeta(x)\big|^2-2a_0\bar\zeta(x),\; \bar\zeta := \sqrt{|\zeta|^2-1}.\]
Then Isaacs condition \reff{Isaacs1} holds and 
\begin{equation}
J_{\rm S}(\a) 
= 
\dbE^{\dbP_0}\bigg[ |X^\a_T|^2 - \int_0^T \big[ 2\a^0_t\bar\zeta(X^\a_t)  -  |\bar\zeta(X^\a_t)|^2\big]  \mathrm{d}t\bigg].
\end{equation}
 In this case, we still have $\ol V^{\rm FS}_0=\ul V^{\rm FS}_0 = v(0,0)$, where $v(t,x) = x^2+T-t$ is the unique classical solution to the following {\rm HJBI} equation
\begin{equation}
\label{egfleming1989existence2-Isaacs}
-\pa_t v 
- \inf_{a_0\in A_0}  \Big\{{\frac12} |a_0|^2 \pa^2_{xx} v - 2a_0\bar\zeta(x)+ |\bar\zeta(x)|^2 \Big\}
- \sup_{a_1\in A_1} \Big\{{\frac12} |a_1|^2 \pa^2_{xx} v  \Big\}  =0,
 \;
 v(T,x) = x^2.
\end{equation}
 Moreover, the $($unique$)$ saddle--point of the Hamiltonian   in the sense of \eqref{saddle1} is
\begin{equation}
\label{eg3-saddle}
\widehat a_0 = \bar \zeta(x),\; \widehat a_1 = 1,\; \mbox{and thus}\; |\widehat a|= \sqrt{|\widehat a_0|^2 + |\widehat a_1|^2} = \zeta.
\end{equation}
Consequently, any natural saddle--point for the game should correspond to these feedback controls. Unfortunately, similar to the previous example, no strong solution exists under these feedback controls. Since this formulation is in strong setting, all involved processes should be $\dbF^W-$measurable, so it is very unlikely that a saddle--point under any reasonable definition will exist for this example.
 \end{example}

\begin{remark}
\label{rem-eg3}
{ We emphasise that the feedback controls \eqref{eg3-saddle} can be obtained naturally from the Hamiltonian of the {\rm PDE} $($provided the {\rm PDE} has a classical solution$)$. In weak formulation of the game, which will be introduced soon and will be the main focus of this paper, the saddle--points of the  Hamiltonian indeed lead to the saddle points of the game. However, in strategy against control formulation, saddle--points of the Hamiltonian provide no clue on the possible saddle--points for the game. This is one of the main drawbacks of this formulation.
}
\end{remark}

\subsection{Strong formulation with symmetric delayed pathwise strategies}
\label{sect:CR}

Recall that the setting in the previous subsection is not symmetric. Cardaliaguet and Rainer \cite{cardaliaguet2013pathwise} have reformulated the game problem by using what they call non--anticipative strategies with delay, thus, formalising the fact that the players only observe their opponent’s action in the actual state, as well as the path of the resulting solution of the stochastic differential equation. Their framework could thus be coined as "strategy against strategy".

\begin{definition}
\label{fleming1989existence-delayed}
Let $\cS^0_{\rm CR}$ be the collection of progressively measurable $($deterministic$)$ maps ${\rm a}^0: C^0([0, T], \dbR^d)\times \dbL^0([0, T], A_1) \longrightarrow \dbL^0([0, T], A_0)$ satisfying the following delay condition
\begin{equation}
[{\rm a}^0(\o, \a^1)](t) = \big[{\rm a}^0( \o_{(t-\delta)^+ \wedge \cd}, \a^1_{(t-\delta)^+\wedge \cd})\big](t),\; 0\le t\le T,
\end{equation}
 for some $\delta>0$ independent of $(\o, \a^1)$, and where for a generic set $E$, $\dbL^0([0, T], E)$ is the set of Borel measurable $E-$valued maps. We define similarly $\cS^1_{\rm CR}$. 
\end{definition}
For delayed strategies, the following simple but crucial result holds.
\begin{lemma}
\label{lem-delay}
For any $({\rm a}^0, {\rm a}^1)\in \cS^0_{\rm CR}\times \cS^1_{\rm CR}$, there exists unique $(\a^0, \a^1)\in \cA^0_{\rm S}\times \cA^1_{\rm S}$ such that
\begin{equation}
\label{tha}
\a^0(\o) = {\rm a}^0(\o, \a^1(\o)),\; \a^1(\o) = {\rm a}^1(\o, \a^0(\o)),\; \mbox{for all}~\o\in\O
\end{equation}
\end{lemma}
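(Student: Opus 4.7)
The plan is a forward Picard-type construction on successive time intervals of length $\delta := \min(\delta_0,\delta_1) > 0$, where $\delta_i$ is the delay of ${\rm a}^i$. The key observation is that, by the delay condition, the output of each ${\rm a}^i$ at time $t$ depends on its inputs only through their restrictions to $[0,(t-\delta)^+]$, which creates a time shift enabling a direct, non-fixed-point recursion away from the initial instant.

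For existence, I would first construct $(\alpha^0,\alpha^1)$ on the initial interval $[0,\delta]$: since $(t-\delta)^+=0$ for every $t\in[0,\delta]$, the outputs depend only on $\omega_0$ and on the values $\alpha^i_0$, the latter being $\cF_0$-measurable and thus essentially deterministic in the Brownian setting, hence choosable consistently. Then I would extend by induction: assuming $(\alpha^0,\alpha^1)$ has been defined on $[0,k\delta]$, I set, for $t \in [k\delta,(k+1)\delta]$,
\[
\alpha^i_t(\omega) \;:=\; \big[{\rm a}^i\big(\omega_{(t-\delta)^+\wedge\cdot},\;\alpha^{1-i}(\omega)_{(t-\delta)^+\wedge\cdot}\big)\big](t), \qquad i=0,1,
\]
which is well-defined because the arguments use only $\alpha^{1-i}|_{[0,k\delta]}$, already constructed. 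Progressive measurability of the resulting $\alpha^i$ transfers from that of ${\rm a}^i$. After $\lceil T/\delta\rceil$ such steps, $(\alpha^0,\alpha^1)\in\cA^0_{\rm S}\times\cA^1_{\rm S}$ is defined on $[0,T]$ and satisfies \eqref{tha} by construction.

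Uniqueness would follow from the same interval induction: if $(\tilde\alpha^0,\tilde\alpha^1)$ is another solution agreeing with $(\alpha^0,\alpha^1)$ on $[0,k\delta]$, then \eqref{tha} combined with the delay condition forces their restrictions to $[k\delta,(k+1)\delta]$ to be given by the same explicit formula above, hence to coincide; the base case uses the same $\dbL^0$ identification employed in the initial step.

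The main obstacle I foresee is the mild fixed-point issue at the initial instant: the delay condition allows $\alpha^0_0$ and $\alpha^1_0$ to depend mutually on each other, which would, if interpreted pointwise, create a genuine two-sided fixed-point problem. This is circumvented by recalling that $\cS^i_{\rm CR}$ takes values in $\dbL^0$, so isolated time instants carry no content in the outputs of ${\rm a}^i$; one is therefore free to start the induction with any measurable choice of initial values, and the construction produces the unique element of $\cA^i_{\rm S}$ up to the standard Lebesgue null-set identification.
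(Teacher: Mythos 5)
The paper itself offers no proof of Lemma \ref{lem-delay} (it is stated as a ``simple but crucial result'' borrowed from Cardaliaguet and Rainer), so there is nothing to compare against line by line; your forward induction on intervals of length $\delta$ is exactly the intended and standard argument, and the inductive step and the uniqueness argument are fine.

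There is, however, a genuine soft spot in your treatment of the base case, and your proposed fix does not quite close it. You locate the circularity correctly --- on $[0,\delta]$ one has $(t-\delta)^+=0$, so $\a^0|_{[0,\delta]}$ is determined by ${\rm a}^0\big(\o_{0\wedge\cdot},\,\a^1_{0\wedge\cdot}\big)$ and symmetrically --- but your resolution (``isolated time instants carry no content in the outputs'') addresses the wrong object. The stopped path $\a^1_{0\wedge\cdot}$ is the \emph{constant} path $s\mapsto\a^1_0$; it is a full-measure element of $\dbL^0([0,T],A_1)$ determined by the single value $\a^1_0$, not a Lebesgue-negligible perturbation. Changing $\a^1_0$ changes this constant path everywhere and hence, under the literal delay condition of Definition \ref{fleming1989existence-delayed}, may change $\a^0_t$ for \emph{every} $t\in[0,\delta]$, not merely at $t=0$. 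So ``any measurable choice of initial values'' does not obviously yield a pair satisfying \eqref{tha} on $[0,\delta]$: one is left with a genuine finite-dimensional fixed-point problem, namely finding $(a_0,a_1)\in A_0\times A_1$ with $a_0=[{\rm a}^0(\o_{0\wedge\cdot},\underline{a_1})](0)$ and $a_1=[{\rm a}^1(\o_{0\wedge\cdot},\underline{a_0})](0)$ (where $\underline{a}$ denotes the constant path), whose existence and uniqueness are not guaranteed without further hypotheses. The correct repair is the convention actually used by Cardaliaguet and Rainer: the delay condition should be read as saying that the output on $[0,\delta]$ does not depend on the control input at all (the input being effectively restricted to the degenerate interval $[0,0]$), so that $\a^0|_{[0,\delta]}$ and $\a^1|_{[0,\delta]}$ are defined outright with no mutual dependence; your induction then starts cleanly and the rest of your argument goes through unchanged.
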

Then Cardaliaguet and Rainer \cite{cardaliaguet2013pathwise} define upper and lower values of the game as
\begin{equation}
\label{CR-V0}
\ol V^{\rm CR}_0 := \inf_{{\rm a}^0\in \cS^0_{\rm CR}} \sup_{{\rm a}^1\in \cS^1_{\rm CR}} J_{\rm S}(\a^0, \a^1),
\; 
\ul V^{\rm CR}_0 :=\sup_{{\rm a}^1\in \cS^1_{\rm CR}}  \inf_{{\rm a}^0\in \cS^0_{\rm CR}} J_{\rm S}(\a^0, \a^1),
\end{equation}
where $(\a^0, \a^1)$ are determined by Lemma \ref{lem-delay}. We emphasise that the mapping from $({\rm a}^0, {\rm a}^1)$ to $(\a^0,\a^1)$ is in pairs, and it does not necessarily induce a mapping from ${\rm a}^0$ to $\a^0$ (or from ${\rm a}^1$ to $\a^1$). Consequently, the game values $(\ol V^{\rm CR}_0, \ul V^{\rm CR}_0)$ are different from the values $(\ol V^{\rm S}_0, \ul V^{\rm S}_0)$ in \eqref{Strong-V0}. We also notice that, unlike in \eqref{fleming1989existence-V} the upper value is defined an $\inf \sup$, since the setting is symmetric again. The main result of \cite{cardaliaguet2013pathwise} is the following.

\begin{theorem}
\label{thm-cardaliaguet2013pathwise}
Under appropriate conditions, including the Isaacs condition, we have $\ol V^{\rm CR}_0 = \ul V^{\rm CR}_0 =  v(0,0)$, where $v$ is the unique viscosity solution to the corresponding {\rm HJBI} equation.
\end{theorem}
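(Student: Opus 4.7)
The plan is to follow the well--trodden viscosity solution programme, adapted to the symmetric delayed setting of Cardaliaguet and Rainer. First I would extend the definitions \reff{CR-V0} to arbitrary initial data $(t,x)\in[0,T]\times \dbR^n$ by running the state equation \reff{Strong-X} from $(t,x)$ and restricting strategies to $[t,T]$, thereby producing value functions $\ol V^{\rm CR}(t,x)$ and $\ul V^{\rm CR}(t,x)$ whose joint continuity follows from standard $\dbL^p$ estimates on \reff{Strong-X} together with the regularity assumptions on $f$ and $g$. The inequality $\ul V^{\rm CR}(t,x)\le \ol V^{\rm CR}(t,x)$ is then nothing but the standard minmax inequality applied to the bifunctional $\Phi({\rm a}^0,{\rm a}^1):=J_{\rm S}(\a^0,\a^1)$, where $(\a^0,\a^1)$ is produced from $({\rm a}^0,{\rm a}^1)$ through Lemma~\ref{lem-delay}.

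The second and most delicate step is to prove, for every stopping time $\t\in[t,T]$, the dynamic programming principle
\begin{equation*}
\ol V^{\rm CR}(t,x) = \inf_{{\rm a}^0\in\cS^0_{\rm CR}}\sup_{{\rm a}^1\in\cS^1_{\rm CR}} \dbE^{\dbP_0}\bigg[\int_t^\t f(s,X_s,\a_s)\mathrm{d}s + \ol V^{\rm CR}(\t,X_\t)\bigg],
\end{equation*}
with the symmetric identity for $\ul V^{\rm CR}$. The delay parameter $\d>0$ is crucial here: it allows pasting of a strategy on $[t,\t]$ with an $\e$--optimal continuation on $[\t+\d,T]$, together with an arbitrary behaviour during the short buffer $[\t,\t+\d]$, without breaking the non--anticipativity condition; the resulting discrepancy is of order $\d$ and is absorbed after a measurable selection of $\e$--optimal continuations and sending $\d\downarrow 0$. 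Once the DPP is available, a standard viscosity argument shows that $\ol V^{\rm CR}$ is a viscosity subsolution of the upper HJBI equation in \reff{PDE1}, and $\ul V^{\rm CR}$ a viscosity supersolution of the lower one. For the subsolution property, if $\ol V^{\rm CR}-\vp$ attains a strict local maximum at $(t_0,x_0)$ with $-\pa_t\vp-\ol H(\cdot,D\vp,D^2\vp)>\e$ there, the definition of $\ol H$ in \reff{Hamiltonian1} furnishes $\hat a_0\in A_0$ such that $-\pa_t\vp-h(\cdot,\hat a_0,a_1)>\e/2$ uniformly in $a_1\in A_1$ on a small neighbourhood. Player~$0$ then plays the constant strategy ${\rm a}^0\equiv \hat a_0\in\cS^0_{\rm CR}$; It\^o's formula applied to $\vp$ up to the exit time of the neighbourhood, together with the DPP, yields a contradiction uniformly in the response ${\rm a}^1\in\cS^1_{\rm CR}$. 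The supersolution property for $\ul V^{\rm CR}$ follows by the symmetric argument using a constant ${\rm a}^1\equiv \hat a_1$.

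Under the Isaacs condition \reff{Isaacs1}, one has $\ol H=\ul H=H$, so that $\ol V^{\rm CR}$ and $\ul V^{\rm CR}$ are respectively sub-- and super--solutions of the \emph{same} HJBI equation, with identical terminal datum $v(T,x)=g(x)$; the assumed uniqueness of viscosity solutions, via a standard comparison principle, then forces $\ol V^{\rm CR}=\ul V^{\rm CR}=v$, and evaluation at $(0,0)$ concludes. I expect the main obstacle to be the dynamic programming principle itself, specifically the measurable selection and pasting of delayed strategies across $\t$: one must verify that the patched pair of strategies is still amenable to the pairing of Lemma~\ref{lem-delay}, and that the effect of the frozen buffer window $[\t,\t+\d]$ can be controlled uniformly in all control parameters. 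By contrast, the subsolution and supersolution verifications reduce to the familiar test--function argument once the DPP and the joint continuity of $(\ol V^{\rm CR},\ul V^{\rm CR})$ are in hand.
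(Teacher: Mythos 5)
You should first be aware that the paper contains no proof of Theorem \ref{thm-cardaliaguet2013pathwise}: it is stated, with deliberately unspecified hypotheses (``under appropriate conditions''), purely as a survey of the main result of Cardaliaguet and Rainer \cite{cardaliaguet2013pathwise}, so there is no in-paper argument to measure yours against. Judged on its own terms, your programme is the classical Evans--Souganidis/Fleming--Souganidis route, and its logical skeleton is sound: the minmax inequality $\ul V^{\rm CR}_0\le\ol V^{\rm CR}_0$ does follow from the symmetric pairing of Lemma \ref{lem-delay}; the subsolution property of $\ol V^{\rm CR}$ for $\ol H$ and the supersolution property of $\ul V^{\rm CR}$ for $\ul H$ via constant strategies are standard once a DPP and continuity are in hand; Isaacs merges the two Hamiltonians, and comparison closes the sandwich and identifies the common function as the unique solution $v$.

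The genuine gap is that essentially all of the mathematical content is concentrated in the dynamic programming principle for delayed pathwise strategies, which you name as the main obstacle but do not prove. Two points in particular cannot be waved through. First, concatenation: given a strategy on $[t,\t]$ and a measurable family of $\e$-optimal continuation strategies indexed by $(\t,X_{\cdot\wedge\t})$, you must check that the pasted maps remain Borel and non-anticipative with a \emph{common} positive delay (the selected continuations may a priori have delays shrinking to $0$ as the conditioning point varies, so one needs either a uniform-delay measurable selection or a discretisation of the conditioning event), and that the fixed point of Lemma \ref{lem-delay} for the pasted pair restricts on $[t,\t]$ to the fixed point of the original pair. Second, the ``$\le$'' half of the DPP requires decomposing an arbitrary opponent strategy at $\t$ and dominating its continuation by $\ol V^{\rm CR}(\t,X_\t)$, which needs the shifted value to be independent of pre-$\t$ information together with a measurable selection; none of this is supplied. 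Note also that an alternative, arguably more economical, route -- consistent with the paper's remark that the values of \cite{fleming1989existence} and \cite{cardaliaguet2013pathwise} coincide -- is to sandwich the delayed-strategy values between the Fleming--Souganidis values and invoke Theorem \ref{thm-fleming1989existence}, thereby avoiding a self-contained DPP for the delayed formulation altogether; if you intend to prove the DPP directly, that step, not the viscosity verification, is where the work lies.
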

In particular, under the above conditions the game values in \cite{fleming1989existence} and \cite{cardaliaguet2013pathwise} are equal. Notice as well that without Isaacs condition, \cite{cardaliaguet2013pathwise} establishes a partial comparison principle, implying that $\ol v$ and $\ul v$ are only viscosity semi--solutions, not necessarily viscosity solutions of the associated HJBI PDE. We remark in addition that this setting is symmetric and one can naturally define saddle--points $(\widehat {\rm a}^0, \widehat {\rm a}^1)$. However, the comments in Remark \ref{rem-fleming1989existence1} (i) and Remark \ref{rem-fleming1989existence2}  (ii), (iii) remain valid, and no existence result of saddle--points is available in general because of the delay restriction on the strategies. Notice also that Buckdahn, Cardaliaguet and Quincampoix \cite{buckdahn2011some} have adapted the BSDE method of Buckdahn and Li \cite{buckdahn2008stochastic} to the framework of non--anticipative strategies with delay.

\vspace{0.5em}
Another way of symmetrising the game problem has been proposed by Fleming and Hern\'andez-Hern\'andez \cite{fleming2011value}. Building upon the fact that Elliott--Kalton strategies are such that, for instance for the lower value the minimising player has an advantage in the information available to him at each time, they propose to restrict to so--called strictly progressively measurable strategies which make this advantage disappear. They then define a notion of approximate $\eps-$saddle--points, but cannot obtain the existence of a saddle--point in the sense we have considered so far.

\subsection{Strong formulation with symmetric feedback controls}
\label{sect:CRpham2014two}

The works closest to our present paper are Cardaliaguet and Rainer \cite{cardaliaguet2009stochastic}, Pham and Zhang \cite{pham2014two}, and S\^irbu \cite{sirbu2014stochastic, sirbu2015asymptotic}. Consider the following SDE with feedback controls  $\alpha:[0,T]\times C^0([0,T],\dbR^d)\longrightarrow A$
\begin{equation}\label{SDE-feedbacksimple}
X_t = X_0+\int_0^t \si(s, X_s, \a_s(X_\cd))  \mathrm{d}W_s + \int_0^t b(s, X_s, \a_s(X_\cd))  \mathrm{d}s,\; \mathbb P_0-\mbox{a.s.}
\end{equation}
Let $\cA_{\rm sp}$ denote the set of certain simple feedback controls (meaning controls which are constant or deterministic in between points of a partition of $[0,T]$ and/or $\Omega$), in particular so that the above SDE has a unique strong solution for every admissible control $\a$. We remark that the sets $\cA_{\rm sp}$ in \cite{cardaliaguet2009stochastic}, \cite{pham2014two}, \cite{sirbu2014stochastic, sirbu2015asymptotic} are not the same, differing mainly on whether some mixing is allowed in the strategies or not, and on whether stopping times are allowed in the time--discretisation. The upper and lower game values are then defined as
\begin{equation}
\ol V^{\rm sp}_0 
:= 
\inf_{\a^0\in \cA^0_{\rm sp}} \sup_{\a^1\in \cA^1_{\rm sp}} J_{\rm S}\big(\a^0(X), \a^1(X)\big),
\;
\ul V^{\rm sp}_0 :=\sup_{\a^1\in \cA^1_{\rm sp}}  \inf_{\a^0\in \cA^0_{\rm sp}} J_{\rm S}\big(\a^0(X_\cdot), \a^1(X_\cdot)\big),
\end{equation}
where, for $\a\in\cA_{\rm sp}$, $X$ is determined by \eqref{SDE-feedbacksimple}. Then we still have
\begin{theorem}
\label{thm-pham2014two}
Under appropriate conditions, including the Isaacs condition, we have $\ol V^{\rm sp}_0 = \ul V^{\rm sp}_0 =  v(0,0)$, where $v$ is the unique viscosity solution to the HJBI equation \eqref{PDE1}.
\end{theorem}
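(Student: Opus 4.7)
The plan is to follow the dynamic programming / viscosity solution programme developed in \cite{cardaliaguet2009stochastic,pham2014two}. The argument splits naturally into three steps.

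\textbf{Step 1: extension and a priori regularity.} I would first promote the scalars $\ol V^{\rm sp}_0, \ul V^{\rm sp}_0$ to functions $\ol v^{\rm sp}(t,x), \ul v^{\rm sp}(t,x)$ on $[0,T]\times\dbR^n$ by starting the controlled equation \eqref{SDE-feedbacksimple} at time $t$ from $X_t=x$. Standard Lipschitz estimates on $b,\si,f,g$ combined with Gronwall's inequality yield uniform $L^2$ bounds on $X$, and hence Lipschitz continuity of $\ol v^{\rm sp}$ and $\ul v^{\rm sp}$ in $x$, together with $\tfrac12$-H\"older regularity in $t$, uniformly over $\a\in\cA_{\rm sp}$. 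This regularity will be essential in Step 2 to close the concatenation argument.

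\textbf{Step 2: dynamic programming principle.} For any stopping time $\tau\in[t,T]$, I would prove the DPP
\begin{equation*}
\ol v^{\rm sp}(t,x) = \inf_{\a^0\in\cA^0_{\rm sp}}\sup_{\a^1\in\cA^1_{\rm sp}}\dbE^{\dbP_0}\bigg[\int_t^\tau f\big(s,X_s,\a_s(X_\cd)\big)\mathrm{d}s + \ol v^{\rm sp}(\tau,X_\tau)\bigg],
\end{equation*}
together with its symmetric $\sup\inf$ version for $\ul v^{\rm sp}$. The crucial ingredient is \emph{concatenation}: given a control $\a^0$ played on $[t,\tau]$ and an $\eps$-optimiser $\widetilde\a^0$ on $[\tau,T]$ for the sub-game starting at $(\tau,X_\tau)$, the concatenation $\a^0\otimes_\tau\widetilde\a^0$ must remain in $\cA^0_{\rm sp}$. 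The point of working with \emph{simple} feedback controls is precisely that one can discretise the state space at time $\tau$ into finitely many measurable cells, select a single simple control per cell, and thereby obtain a concatenated object that is still simple; the resulting discretisation error is absorbed by the regularity from Step 1.

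\textbf{Step 3: viscosity characterisation and conclusion.} Plugging smooth test functions into the DPP and performing the standard It\^o/freezing analysis shows that $\ol v^{\rm sp}$ is a viscosity solution of $-\pa_t v - \ol H(\cd,Dv,D^2v)=0$ with terminal condition $v(T,\cd)=g$, and analogously that $\ul v^{\rm sp}$ is a viscosity solution of $-\pa_t v - \ul H(\cd,Dv,D^2v)=0$. Under the Isaacs condition \eqref{Isaacs1}, $\ol H=\ul H=H$, so the two PDEs coincide; combined with the assumed uniqueness of the viscosity solution this forces $\ol v^{\rm sp}=\ul v^{\rm sp}=v$, and evaluation at $(0,X_0)$ yields the theorem.

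\textbf{Main obstacle.} The delicate step is Step 2: producing a genuinely simple concatenated control. A naive measurable selection would yield a control depending on $X_\tau$ in a manner that destroys the piecewise-constant/deterministic structure defining $\cA_{\rm sp}$. Overcoming this requires a careful finite partition of the state space at $\tau$ together with the regularity from Step 1 to control the induced error; the precise way in which mixing or stopping-time-based time discretisations are allowed inside $\cA_{\rm sp}$ is exactly what distinguishes the approaches of \cite{cardaliaguet2009stochastic}, \cite{pham2014two}, and \cite{sirbu2014stochastic,sirbu2015asymptotic}, and is the source of the "appropriate conditions" in the statement. An alternative, avoiding the DPP entirely, would be to follow S\^irbu's Perron-type construction of $v$ as the envelope of viscosity sub- and super-solutions that are squeezed by $\ul v^{\rm sp}$ from below and $\ol v^{\rm sp}$ from above.
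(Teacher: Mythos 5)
First, a point of context: the paper does not prove Theorem \ref{thm-pham2014two} at all. It is a survey statement in Section \ref{sect:CRpham2014two}, attributed to \cite{cardaliaguet2009stochastic}, \cite{pham2014two} and \cite{sirbu2014stochastic,sirbu2015asymptotic}, so there is no in-paper proof to match yours against. The closest thing is the paper's own Theorem \ref{thm-gamevalue}, proved in Section \ref{sect-Vt} by exactly the programme you outline (regularity, DPP via cell-wise selection of $\eps$-optimisers, viscosity property, uniqueness), so your high-level architecture is the right one.

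There is, however, a genuine gap in your Step 1, and it is precisely the point the paper flags as the crux of the feedback formulation. For a \emph{fixed} simple feedback control $\a\in\cA_{\rm sp}$, the map $x\longmapsto X^{t,x,\a}$ is \emph{not} Lipschitz, and Gronwall does not apply: two nearby initial conditions can land in different cells of the partition defining $\a$, after which the coefficients in \eqref{SDE-feedbacksimple} differ by $O(1)$, so $|b(s,X_s,\a_s(X_\cdot))-b(s,X'_s,\a_s(X'_\cdot))|$ is not controlled by $|X_s-X'_s|$. Consequently the claimed Lipschitz/H\"older regularity of $\ol v^{\rm sp},\ul v^{\rm sp}$, ``uniformly over $\a\in\cA_{\rm sp}$'', is false in general, and since your Step 2 explicitly uses this regularity to absorb the discretisation error in the concatenation, the argument does not close. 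This is exactly why \cite{pham2014two} assumes $b$ and $\si$ independent of the state (compare Assumption \ref{assum-bsi} and the discussion after Lemma \ref{lem-Vreg}: when the coefficients do not depend on $\o$, the law of the increments can be fixed and the state enters only through $f$ and the terminal condition, restoring uniform continuity), and why \cite{sirbu2014stochastic,sirbu2015asymptotic} abandons the DPP altogether in favour of the stochastic Perron method. In other words, the ``appropriate conditions'' in the statement are not a routine Lipschitz hypothesis but a structural restriction (state-independent coefficients, or a Perron-type argument) whose entire purpose is to replace your Step 1; see also Remark \ref{rem-DPP}, which explains the related obstruction to even the one-sided DPP for the inner optimisation when one control is frozen. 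Your ``Main obstacle'' paragraph correctly identifies concatenation as delicate and correctly names the Perron alternative, but it mislocates the real difficulty: the concatenation device (finite partition, one simple control per cell) is standard and is carried out in Lemma \ref{lem-cP} and Step 1.1 of Theorem \ref{thm-DPP}; what is not standard is the a priori regularity that makes the cell-wise error small.
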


We observe that, in contrast with \cite{cardaliaguet2009stochastic} and \cite{sirbu2014stochastic, sirbu2015asymptotic}, the framework of \cite{pham2014two} allows for path--dependent dynamics, and in this case the Isaacs equation becomes a so--called path--dependent PDE, which will be the main tool in this paper. However, \cite{pham2014two} does not allow for $x-$dependence in the coefficients $b$ and $\sigma$, mainly for the purpose of proving the regularity of the value functions, an issue which becomes very subtle in the present feedback formulation. In the Markovian setting, this difficulty is remarkably by--passed in \cite{sirbu2014stochastic, sirbu2015asymptotic} by using the notion of stochastic viscosity solutions of Bayraktar and S\^irbu \cite{bayraktar2012stochastic, bayraktar2013stochastic}. 

\vspace{0.5em}
We also remark that for feedback controls, it is a lot more convenient to use the so--called weak formulation, under which the state process $X$ is fixed and the players control its distribution. We finally note that the restriction to simple feedback controls is a serious obstacle for obtaining a saddle--point. This is the main drawback that addressed in this paper.

\subsection{A complete result in the uncontrolled diffusion setting}
\label{sect:hamadene1995zero}

The case where the diffusion coefficient $\sigma$ is not controlled by any of the players has received a lot of attention since the inception of the study of stochastic differential games, as it allowed for a much simpler treatment. Hence, using PDE methods in a Markovian setting with feedback controls (though it is not clear whether strong or weak formulation is considered, see \cite[Another remark, p. 85]{friedman1972stochastic}), Friedman \cite{friedman1971differential,friedman1972stochastic} proved existence of an equilibrium for an $N-$players game, as well as existence of a value and a saddle--point for two--players zero--sum games. Using the martingale approach of Davis and Varaiya \cite{davis1973dynamic} for stochastic control problems, a version of the problem in weak formulation was then considered by Elliott \cite{elliott1976existence,elliott1977existence}, and Elliott and Davis \cite{elliott1981optimal}, allowing in particular for non--Markovian dynamics. Their approach was subsequently streamlined and simplified by Hamad\`ene and Lepeltier \cite{hamadene1995zero,hamadene1995backward} and Hamad\`ene, Lepeltier and Peng \cite{hamadene1997bsdes} using BSDEs methods. 

\vspace{0.5em}
Since their approach is close in spirit to the one we wish to follow, we dedicate this section to describing it. Consider the following drift--less SDE
\begin{equation}
\label{SDE-driftless}
X_t = \int_0^t \si_s(X_\cd)  \mathrm{d}W_s,\ \dbP_0-\mbox{a.s.}
\end{equation}
where $\si:[0,T]\times C^0([0,T],\dbR^d)\longrightarrow\dbS^d$ is progressively measurable, bounded, non--degenerate, and uniformly Lipschitz in $x$. Hence, the above SDE has a unique strong solution, and $X$ and $W$ generate the same filtration. We next introduce a progressively measurable bounded map $\lambda:[0,T]\times C^0([0,T],\dbR^d)\times A\longrightarrow\dbR^d$, together with the equivalent probability measures
 \begin{equation}
\frac{ \mathrm{d} \dbP^\a}{ \mathrm{d}\dbP_0}:= \exp\bigg(\int_0^T \lambda_t\big(X_\cd,\alpha_t(X_\cd)\big)\cdot  \mathrm{d}W_t - {\frac12} \int_0^T |\lambda_t\big(X_\cd,\alpha_t(X_\cd)\big)|^2 \mathrm{d}t\bigg)
 \mbox{ for all }
 \alpha \in \cA_{\rm HL},
 \end{equation}
 where $\cA_{\rm HL}$ is the set of  admissible controls, for which the above probability measure is well--defined. 
By Girsanov's theorem, the process $W^\a:=W-\int_0^\cd \lambda_t(X_\cd,\alpha_t(X_\cd)) \mathrm{d}t$ is an $\dbP^\a-$Brownian motion, and $(\dbP^\a, X)$ is a weak solution of the drift--controlled SDE
 \begin{equation}\label{SDE-drift}
 X_t =  \int_0^t b_s(X_\cd, \a_s(X_\cd))  \mathrm{d}s+ \int_0^t \si_s(X_\cd)  \mathrm{d}W^\a_s,\; \dbP^\a-\mbox{a.s., where}~ b := \si \l.
\end{equation}
The upper and lower values of the game are then defined by
$$
\ol V^{\rm HL}_0 := \inf_{\a^0\in \cA_{\rm HL}^0} \sup_{\a^1\in \cA_{\rm HL}^1} J_{\rm W}(\a^0,\a^1),
 \; 
 \ul V^{\rm HL}_0 := \sup_{\a^1\in \cA_{\rm HL}^1} \inf_{\a^0\in \cA_{\rm HL}^0}  J_{\rm W}(\a^0,\a^1),
 ~\mbox{where}~
 J_{\rm W}(\a) := \dbE^{\dbP^\a}\Big[\xi + \int_0^T f_t(, \a_t) \mathrm{d}t\Big],
 $$
 for some appropriate functions $f:[0,T]\times C^0([0,T],\dbR^d)\times A\longrightarrow \R$ and $\xi:C^0([0,T],\dbR^d)\longrightarrow\R$. We finally introduce 
 \begin{equation*}
 \ol F_t(x,z) := \inf_{a_0\in A_0} \sup_{a_1\in A_1} \big\{b_t(x,a)\cdot z + f_t(x,a) \big\}
 ~\mbox{and}~
 \ul F_t(x,z) := \sup_{a_1\in A_1} \inf_{a_0\in A_0} \big\{b_t(x,a)\cdot z + f_t(x,a) \big\},
 \end{equation*}
where as usual $a=(a_0,a_1)$. Notice that, by extending the Hamiltonians \eqref{Hamiltonian1} to the path dependent case in the obvious way, we have the correspondence 
\[\ol H=\ol F + \frac12{\rm Tr}[\sigma^2\g], \; \text{and}\; \ul H=\ul F + \frac12{\rm Tr}[\sigma^2\g].\]
The main result of \cite{hamadene1995zero} is the following.
\begin{theorem} 
\label{thm-hamadene1995zero}
Under appropriate conditions, including Isaacs's condition $\ol F=\ul F=:F$, we have $\ol V^{\rm HL}_0 = \ul V^{\rm HL}_0 =  Y_0$, where $(Y,Z)$ is the unique solution to the backward {\rm SDE}
 \begin{equation}
 \label{hamadene1995zero-BSDE}
 dY_t
 =
  -F_t(X_\cd,Z_t) \mathrm{d}t + Z_t\cdot  \mathrm{d}X_t,
  ~\mbox{and}~Y_T=\xi(X_\cdot),
 ~\dbP_0-{\rm a.s.}
 \end{equation}
Moreover, any saddle--point of $F$ induces a saddle--point of the game.
\end{theorem}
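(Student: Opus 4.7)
The plan is to express the criterion $J_{\rm W}(\alpha)$ as $Y_0$ plus a ``Hamiltonian gap'' term, and then exploit the Isaacs condition saddle point to sign this gap depending on which player deviates.

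First I would solve the BSDE \reff{hamadene1995zero-BSDE} under $\dbP_0$. Under the standard assumptions (boundedness of $b$, $f$, $\xi$, and Lipschitz continuity of $F$ in $z$, which will hold because of the Lipschitz structure of $b$ and $f$ together with the boundedness of $\sigma$), existence and uniqueness of $(Y,Z)$ in the usual square--integrable spaces is classical. Note that since $\sigma$ is bounded and non--degenerate, writing $dX_t = \sigma_t(X_\cd) dW_t$ one can equivalently parameterise the martingale integrand as $Z\cd dX = (Z^\top \sigma)\cd dW$, so integrability with respect to $dX$ and $dW$ are equivalent.

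Next, for an arbitrary $\a \in \Ac_{\rm HL}$, I would apply Girsanov. Under $\dbP^\a$, the process $W^\a = W - \int_0^\cd \l_t(X_\cd, \a_t(X_\cd)) \mathrm{d}t$ is a Brownian motion, and $dX_t = b_t(X_\cd, \a_t(X_\cd)) \mathrm{d}t + \si_t(X_\cd) \mathrm{d}W^\a_t$, $\dbP^\a$--a.s. Substituting into \reff{hamadene1995zero-BSDE}, I would compute
\begin{equation*}
\xi(X_\cd) + \int_0^T f_t(X_\cd,\a_t) \mathrm{d}t = Y_0 + \int_0^T \Big[b_t(X_\cd,\a_t)\cd Z_t + f_t(X_\cd,\a_t) - F_t(X_\cd, Z_t)\Big] \mathrm{d}t + \int_0^T Z_t^\top \si_t(X_\cd) \mathrm{d}W^\a_t.
\end{equation*}
Since $\l$ (and hence $b$) is bounded, the stochastic integral is a true $\dbP^\a$--martingale, so taking $\dbE^{\dbP^\a}$ gives the key representation
\begin{equation*}
J_{\rm W}(\a) = Y_0 + \dbE^{\dbP^\a}\bigg[\int_0^T \Big(h_t(X_\cd, Z_t, \a_t) - F_t(X_\cd, Z_t)\Big) \mathrm{d}t\bigg], \qquad h_t(x,z,a) := b_t(x,a)\cd z + f_t(x,a).
\end{equation*}

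Now I would invoke the Isaacs condition $\ol F = \ul F = F$ together with a measurable selection theorem (e.g.\ Benes or Filippov) to pick Borel measurable maps $\wh a_0(t,x,z),\; \wh a_1(t,x,z)$ such that $(\wh a_0,\wh a_1)$ is a saddle point of $h_t(x,z,\cd,\cd)$ at every $(t,x,z)$, i.e.
\begin{equation*}
h_t(x,z,\wh a_0(t,x,z), a_1) \le F_t(x,z) \le h_t(x,z, a_0, \wh a_1(t,x,z)), \q \forall (a_0,a_1)\in A.
\end{equation*}
Setting $\wh\a^i_t(X_\cd) := \wh a_i(t, X_\cd, Z_t)$ defines admissible feedback controls in $\Ac_{\rm HL}^i$. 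With $\a^0 = \wh \a^0$ and any $\a^1 \in \Ac_{\rm HL}^1$, the integrand in the representation is $\le 0$, $\dbP^{(\wh\a^0,\a^1)}$--a.s., yielding $J_{\rm W}(\wh\a^0,\a^1) \le Y_0$; hence $\ol V^{\rm HL}_0 \le \sup_{\a^1} J_{\rm W}(\wh\a^0,\a^1) \le Y_0$. Symmetrically with $\a^1 = \wh\a^1$, the integrand is $\ge 0$, giving $\ul V^{\rm HL}_0 \ge Y_0$. Combined with the trivial $\ul V^{\rm HL}_0 \le \ol V^{\rm HL}_0$, this yields the equality $\ol V^{\rm HL}_0 = \ul V^{\rm HL}_0 = Y_0$, and moreover $J_{\rm W}(\wh\a^0,\wh\a^1) = Y_0$ makes $(\wh\a^0,\wh\a^1)$ a genuine saddle point of the game in the sense of \reff{Strong-equilibrium}.

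The main technical obstacle is the measurable selection step: one needs that $\wh a_i$ can be chosen Borel measurable in $(t,x,z)$ and progressively measurable in $x$, so that the feedback controls $\wh\a^i$ indeed belong to $\Ac_{\rm HL}^i$ and the Girsanov change of measure stays well--defined. This is standard under the usual compactness/continuity hypotheses on $A_i$ and on $h$, but it is the only place where the proof uses more than BSDE comparison and Girsanov. Everything else is robust to the path--dependent structure: no PDE is needed, no regularity of a value function is used, and the non--anticipativity of all ingredients is inherited directly from the progressive measurability of $b$, $\si$, $f$ and of the selectors.
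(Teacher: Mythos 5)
The paper states this theorem as a cited result from Hamad\`ene and Lepeltier \cite{hamadene1995zero} and gives no proof of its own; your argument reconstructs exactly the approach of that reference (BSDE well--posedness for the Lipschitz generator $F$, the Girsanov representation of $J_{\rm W}(\a)$ as $Y_0$ plus the expected Hamiltonian gap $\dbE^{\dbP^\a}[\int_0^T (h_t(X_\cd,Z_t,\a_t)-F_t(X_\cd,Z_t))\mathrm{d}t]$, and signing that gap via a measurable saddle--point selector), and it is correct. The only refinement worth noting is that the equality $\ol V^{\rm HL}_0=\ul V^{\rm HL}_0=Y_0$ under the Isaacs condition alone is usually derived with $\eps$-optimal measurable selectors of the inf and the sup separately (giving $\ol V^{\rm HL}_0\le Y_0+\eps T$ and $\ul V^{\rm HL}_0\ge Y_0-\eps T$), reserving the exact saddle--point selection, which requires compactness of the $A_i$ and continuity of $h$ in $a$, for the ``moreover'' clause.
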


The following example shows that the framework of \cite{hamadene1995zero} allows to obtain a saddle--point of a game in typical situations where all the previous frameworks of this section fail.

\begin{example}
\label{eg-weak1}
Consider the setting in Example \ref{eg-strong} with $c=1, \rho=0$, except that $X = (X^1, X^2)$ should be viewed as weak solution of the following {\rm SDE}
\begin{equation*}
X^{1}_t = \int_0^t \a^0_s(X^{1}_\cd, X^{2}_\cd)  \mathrm{d}s +  W^{1,\a}_t,\; 
X^{2}_t = \int_0^t \a^1_s(X^{1}_\cd, X^{2}_\cd)  \mathrm{d}s + W^{2,\a}_t.
\end{equation*}
Note that, unlike in Example \ref{eg-strong}, here $X^{1}$ and $X^{2}$ depend on both $\a^0$ and $\a^1$. 
In this case, we have 
\begin{equation*}
\ol F_t(z) = \ul F_t(z) = F_t(z) 
=
\inf_{|a_0|\le 1} \{a_0 z_1\} + \sup_{|a_1|\le 1} \{a_1 z_2\}
=
 - |z_1| +|z_2|,
\end{equation*}
and the unique saddle--point of $F$ is given by
\begin{equation*}
\widehat a_0(z) := -{\rm sgn}(z_1),\; \widehat a_1(z) := {\rm sgn}(z_2).
\end{equation*}
Notice that since $\sigma$ is the identity matrix here, we have $X=W$ and the {\rm BSDE} \eqref{hamadene1995zero-BSDE} becomes
\begin{equation*}
Y_t = \big|X^1_T-X^2_T\big|^2 + \int_t^T \big(|Z^2_s|-|Z^1_s|\big)  \mathrm{d}s - \int_t^T Z_s \cd  \mathrm{d}X_s,\;\dbP_0-{\rm a.s.}
\end{equation*}
In fact, one may verify straightforwardly that the solution to the above {\rm BSDE} is 
\[Y_t := \big|X^1_t-X^2_t\big|^2+2(T-t), \; Z^1_t := 2\big(X^1_t-X^2_t\big),\; Z^2_t = 2\big(X^2_t - X^1_t\big).\]
Consequently, the game value is $Y_0 = 2T$ and the saddle--point of the game is given by
\[\widehat \a^0_t := -{\rm sgn}(Z^1_t) = - {\rm sgn}\big(X^1_t - X^2_t\big), ~\widehat \a^1_t := {\rm sgn}(Z^2_t) = {\rm sgn}\big(X^2_t-X^1_t\big).\]
\end{example}

Our objective of this paper  is to  extend Theorem \ref{thm-hamadene1995zero} to a controlled (possibly degenerate) diffusion framework. Again we shall use weak formulation and our main tool will be path dependent PDEs, whose semi--linear counterpart is exactly the backward SDE. 
In particular, our results will provide a complete characterisations of Examples \ref{eg-fleming1989existence} and \ref{eg-fleming1989existence2}, once reformulated in the setting of weak solutions, 
as well as the degenerate situation of Example \ref{eg-weak1} (namely $|\rho| = 1$). As will become clearer later, the assumptions on the coefficients we will require are slightly stronger than in the recent work of Zhang \cite{zhang2017existence,zhang2017existence2}, but the latter considers the strong formulation with strategy against control and thus cannot obtain any positive results towards existence of saddle--points. The weak formulation allows to do so, but at the price of more stringent assumptions on the coefficients.

{\color{black}
\section{Main results}
\label{sect:main}
\setcounter{equation}{0}

\subsection{Path dependent game in weak formulation}
\label{sect:setting}
}
The canonical space $\O:= \{\o\in C([0, T]; \dbR^d):\o_0=0\}$ is endowed with the $\dbL^\infty-$norm. The corresponding canonical process $X$ induces the natural filtration $\dbF$. The time space set 
$\Th := [0, T]\times \O$ is equipped with the pseudo-distance $d_\infty((t,\o), (t',\o')) := |t-t'| + \|\o_{\wedge t} - \o'_{\wedge t'}\|_\infty$.

The set of control processes $\ol\cA :=\ol \cA^0\times\ol \cA^1$ consists of all $\dbF-$progressively measurable $A-$valued processes, for some subset $A:=A_0\times A_1$ of a finite dimensional space. For all $\a\in  \ol\cA$, we denote by $\cP(\a)$ the set of weak solutions of the following path--dependent  SDE
 \begin{equation}
 \label{SDE}
X_t
= \int_0^t b_s(X_\cd, \alpha_s(X_\cd)) \mathrm{d}s+
\int_0^t \sigma_s(X_\cd, \alpha_s(X_\cd)) \mathrm{d}W_s,
~t\in[0,T],
 \end{equation}
where $b: \Th\times A \longrightarrow \dbR^d$, $\si: \Th\times A \longrightarrow \dbS^d$ satisfies the conditions in Assumption \ref{assum-coef} below. Here, for simplicity, we assume that $X$ and $W$ have the same dimension $d$ and $\si$ is symmetric, but the extension to the more general situation is straightfroward. Equivalently, any $\dbP\in \cP(\a)$ is a probability measure on the canonical space $\O$ which solves the following martingale problem  
 \begin{eqnarray}\label{cPa}
 M^\a_t := X_t - \int_0^t b_s(\a_s) \mathrm{d}s
 &\mbox{is a $\dbP-$martingale, with}& 
 \la M^\a\ra_t  =\int_0^t \si^2_s(\a_s)  \mathrm{d}s,~0\le t\le T,~\dbP-\mbox{a.s.}
 \end{eqnarray}
Here we take the notational convention that, by putting the time variable $s$ as subscript,  we mean $b_s, \si_s, \a_s$ depend on $X$, but we often omit $X$ itself for notational simplicity. Notice that in general, the set $\cP(\a)$ for an arbitrary $\a\in\overline \Ac$ may be empty. We thus introduce the following subset $\cA := \cA^0\times \cA^1$, where\footnote{The idea here is that if there existed $\a^{0,\star}\in\overline{\Ac}^0$ such that $\Pc(\alpha^{0,\star},\alpha^1)=\emptyset$ for any $\alpha^1\in\overline{\Ac}^1$, then obviously in the upper value Player $0$ will play $\alpha^{0,\star}$, since then whatever the choice of Player $1$ afterwards will lead to a value of $-\infty$. Similarly, if there existed $\a^{1,\star}\in\overline{\Ac}^1$ such that $\Pc(\alpha^{0},\alpha^{1,\star})=\emptyset$ for any $\alpha^0\in\overline{\Ac}^0$, in the lower value Player $1$ will play $\alpha^{1,\star}$, since then whatever the choice of Player $0$ afterwards will lead to a value of $+\infty$. Our restriction is here merely to prevent this obvious degeneracy.}
\begin{eqnarray}
\label{cA}
\cA^i \;:=\; \big \{\a^i\in \ol\cA^i: \cP(\a^i)\neq \emptyset\big\},
&\mbox{where}&
\cP(\a^i) \;:=\; \bigcup_{\a^{1-i}\in \ol\cA^{1-i}}  \cP(\a^0,\a^1),
~~i=0,1.
\end{eqnarray}
For an $\cF_T-$measurable random variable $\xi$ and an $\dbF-$progressively measurable $f: \Th \times A \longrightarrow \dbR$, we now define 
 \begin{equation}
 \label{J}
 J_0(\a, \dbP) := \dbE^\dbP\bigg[\xi + \int_0^T f_s(\a_s) \mathrm{d}s\bigg],
 ~~
 \ol J_0(\a) := \sup_{\dbP\in  \cP(\a)} J_0(\a, \dbP),
 ~~\mbox{and}~
 \ul J_0(\a) := \inf_{\dbP\in  \cP(\a)} J_0(\a, \dbP),
 \end{equation}
with the convention that $\sup \emptyset := -\infty$ and $\inf\emptyset := \infty$. 
The upper and lower values of the game are then
 \begin{eqnarray}
 \label{ubar}
 \ol V_0 \;:=\; \inf_{\a^0\in \cA^0} \sup_{\a^1\in \cA^1} \ol J_0(\a),
 &\mbox{and}& 
 \ul V_0 \;:=\; \sup_{\a^1\in \cA^1}  \inf_{\a^0\in \cA^0} \ul J_0(\a).
 \end{eqnarray}
Notice that the inequality $\infty\ge \ol V_0 \ge \ul V_0\ge -\infty$ always holds. 

\vspace{0.5em}

\begin{assumption}\label{assum-coef}
{\rm (i)} $b$, $\si$, and $f$  are bounded, $\dbF-$progressively measurable in all variables, and uniformly continuous in $(t,\o)$ under $d_\infty$, uniformly in $a\in A$.

\vspace{0.5em}
{\rm (ii)} $\xi$ is bounded and uniformly continuous in $\o$.
\end{assumption}

We remark that we allow $\si$ to be degenerate. Under the assumptions on $b$ and $\si$, the sets $\cA^1$ and $\cA^2$ are not empty, as they contain constant and even piecewise constant controls. The remaining conditions on $f$ and $\xi$ guarantee that $J_0$ is finite. We emphasise that the boundedness assumption may be relaxed to linear growth. 

\begin{definition}
\label{defn-saddle}
The game value exists if $\ol V_0 = \ul V_0$. A control $\widehat \a = (\widehat \a^0, \widehat \a^1)\in \cA$ is a saddle--point of the game if
\begin{equation}
\label{saddle}
 \ol J_0(\widehat \a^0,\a^1) 
 \;\le\;  
 \ol J_0(\widehat \a) 
 \;=\; 
 \ol V_0
 \;=\;
 \ul V_0 
 \;=\; 
 \ul J_0(\widehat\a) 
 \;\le\; 
 \ul J_0(\a^0, \widehat\a^1), 
 ~\mbox{for all}~~
 (\a^0, \a^1)\in \cA.
\end{equation}
\end{definition}

We remark that, if $\widehat \a$ is a saddle--point, then $\cP(\widehat\a)\neq \emptyset$ and $J_0(\widehat\a, \dbP) = \ol V_0 = \ul V_0$ for all $\dbP\in \cP(\widehat\a)$. We conclude this subsection with a generic result concerning saddle--points. Denote
\begin{equation}
\label{J0a}
\ol J_0(\a^0) := \sup_{\a^1 \in \cA^1} \ol J_0(\a^0, \a^1),
~~\mbox{and}~~
\ul J_0(\a^1) := \inf_{\a^0 \in \cA^0} \ul J_0(\a^0, \a^1).
\end{equation}
Then the game problems in \eqref{ubar} can be rewritten
\begin{equation}
\label{ubar2}
\ol V_0 = \inf_{\a^0\in \cA^0}  \ol J_0(\a^0)
\mbox{ and }
 \ul V_0 =\; \sup_{\a^1\in \cA^1}   \ul J_0(\a^1).
\end{equation}

\begin{proposition}
\label{prop-saddle} Assume the game value exists. Then $\widehat \a\in \cA$ is a saddle--point of the game if and only if $\widehat \a^0$ and $\widehat \a^1$ are optimal controls of $\ol V_0$ and $\ul V_0$ in \eqref{ubar2}, respectively.
\end{proposition}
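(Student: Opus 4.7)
The plan is to prove the equivalence by two straightforward implications, essentially unfolding the definitions in \eqref{ubar2} together with the saddle-point condition \eqref{saddle}.

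For the direct implication, I would assume that $\widehat\a=(\widehat\a^0,\widehat\a^1)$ is a saddle-point and exploit the left half of \eqref{saddle}, namely $\ol J_0(\widehat\a^0,\a^1)\le \ol J_0(\widehat\a)=\ol V_0$ for every $\a^1\in\cA^1$. Taking the supremum over $\a^1\in\cA^1$ on the left yields $\ol J_0(\widehat\a^0)\le \ol V_0$, and since by \eqref{ubar2} we always have $\ol J_0(\a^0)\ge \ol V_0$ for every $\a^0\in\cA^0$, specializing at $\a^0=\widehat\a^0$ gives the reverse inequality. Thus $\ol J_0(\widehat\a^0)=\ol V_0$, so $\widehat\a^0$ is optimal. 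The argument for $\widehat\a^1$ is entirely symmetric: starting from $\ul J_0(\a^0,\widehat\a^1)\ge \ul J_0(\widehat\a)=\ul V_0$, taking the infimum over $\a^0\in\cA^0$ and combining with $\ul J_0(\a^1)\le \ul V_0$ from \eqref{ubar2} gives $\ul J_0(\widehat\a^1)=\ul V_0$.

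For the converse, I would assume optimality of both $\widehat\a^0$ and $\widehat\a^1$. Optimality of $\widehat\a^0$ means $\sup_{\a^1\in\cA^1}\ol J_0(\widehat\a^0,\a^1)=\ol V_0$, so that $\ol J_0(\widehat\a^0,\a^1)\le \ol V_0$ for every $\a^1\in\cA^1$, which is exactly the left inequality in \eqref{saddle}; evaluating at $\a^1=\widehat\a^1$ gives $\ol J_0(\widehat\a)\le \ol V_0$. Symmetrically, optimality of $\widehat\a^1$ yields $\ul J_0(\a^0,\widehat\a^1)\ge \ul V_0$ for every $\a^0\in\cA^0$, the right inequality in \eqref{saddle}, and in particular $\ul J_0(\widehat\a)\ge \ul V_0$. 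Using the assumed existence of the game value $\ol V_0=\ul V_0$ and the trivial bound $\ul J_0(\widehat\a)\le \ol J_0(\widehat\a)$ (valid whenever $\cP(\widehat\a)$ is nonempty, since $\inf_{\P\in\cP(\widehat\a)}J_0(\widehat\a,\P)\le \sup_{\P\in\cP(\widehat\a)}J_0(\widehat\a,\P)$) gives the chain
\[
\ol V_0\;\ge\;\ol J_0(\widehat\a)\;\ge\;\ul J_0(\widehat\a)\;\ge\;\ul V_0\;=\;\ol V_0,
\]
which forces all these quantities to coincide. This yields the central equality in \eqref{saddle}.

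The only subtlety is the verification that $\cP(\widehat\a)\neq\emptyset$ in the converse direction, which is needed to have $\ul J_0(\widehat\a)\le \ol J_0(\widehat\a)$. This is immediate because the game value $\ol V_0=\ul V_0$ is finite under Assumption \ref{assum-coef} (boundedness of $\xi$ and $f$), so the inequalities $\ol J_0(\widehat\a)\le \ol V_0<+\infty$ and $\ul J_0(\widehat\a)\ge \ul V_0>-\infty$ combined with the conventions $\sup\emptyset=-\infty$ and $\inf\emptyset=+\infty$ would be inconsistent with the chain above if $\cP(\widehat\a)$ were empty. This completes the plan; the argument is entirely definitional and contains no real analytical obstacle, the only delicate point being the careful handling of the possibly empty $\cP(\widehat\a)$.
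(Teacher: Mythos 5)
Your forward implication is correct and is exactly the definition--unwinding the paper intends (the paper omits the proof as immediate from Definition \ref{defn-saddle}). The first half of your converse is also fine: optimality of $\widehat\a^0$ and $\widehat\a^1$ yields the two outer inequalities of \eqref{saddle}, hence $\ol J_0(\widehat\a)\le\ol V_0$ and $\ul J_0(\widehat\a)\ge\ul V_0$, and if $\cP(\widehat\a)\neq\emptyset$ the chain you write closes the argument.

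The gap is in your treatment of the case $\cP(\widehat\a)=\emptyset$, and your claimed resolution is circular. You argue that emptiness ``would be inconsistent with the chain above'', but the chain $\ol V_0\ge\ol J_0(\widehat\a)\ge\ul J_0(\widehat\a)\ge\ul V_0$ is the \emph{conclusion} you are trying to reach, and its middle link is exactly the inequality that presupposes $\cP(\widehat\a)\neq\emptyset$. If $\cP(\widehat\a)=\emptyset$, the two inequalities you have actually established, namely $\ol J_0(\widehat\a)=-\infty\le\ol V_0$ and $\ul J_0(\widehat\a)=+\infty\ge\ul V_0$, are both vacuously true, so no contradiction arises; what happens instead is simply that $\widehat\a$ fails to be a saddle--point, since then $\ol J_0(\widehat\a)=-\infty\neq\ol V_0$. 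Nor does nonemptiness follow from the optimality hypotheses: by \eqref{cA}, $\widehat\a\in\cA=\cA^0\times\cA^1$ only guarantees that each of $\widehat\a^0$ and $\widehat\a^1$ admits \emph{some} partner for which a weak solution exists, not that the pair $(\widehat\a^0,\widehat\a^1)$ does; and optimality of $\widehat\a^0$ (resp. $\widehat\a^1$) concerns a supremum (resp. infimum) over all partners, which may well be realised away from $\widehat\a^1$ (resp. $\widehat\a^0$). So the converse implication genuinely requires $\cP(\widehat\a)\neq\emptyset$, either as an extra hypothesis or as a fact verified in the specific model (it holds automatically, e.g., when $\si$ is uncontrolled as in Subsection \ref{sect:hamadene1995zero}). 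You were right to flag this as the one delicate point --- it is a subtlety the paper's omitted proof also glosses over --- but the argument you give does not close it.
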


The proof is omitted as it follows directly from Definition \ref{defn-saddle}. We shall see that the existence of the game value will typically follow from the uniqueness of the viscosity solution to the corresponding {\rm HJBI} equation. By Proposition \ref{prop-saddle}, it seems that the existence of saddle--points is then reduced to two standard optimisation problems. However, we emphasise that, due to the weak formulation or more specifically our choice of feedback controls, the mappings $\a^0 \in \cA^0 \longmapsto \ol J_0(\a^0)$ and    $\a^1 \in \cA^1\longmapsto \ul J_0(\a^1)$ are typically not continuous. Even worse, it is not clear what is the appropriate topology for the sets $\cA^0$ and $\cA^1$, and whether they do have the appropriate topological requirements. Therefore, the optimisation problems in \eqref{ubar2} are actually real challenges.

\subsection{Path-dependent HJBI characterisation}
\label{sect:viscosity}
Similar to the Markovian case, we shall use path--dependent PDEs as a powerful tool to study the present path--dependent differential games. For $a=(a_0, a_1) \in A$, similar to \reff{Hamiltonian1} we define
 \begin{equation}
 \label{Hamiltonian}
\left.\begin{array}{c}
\dis h_t(\o,z,\g, a) :=  {\frac12} {\rm Tr}\big[\si^2_t(\o, a) \g\big] + b_t(\o,a) \cd z+ f_t(\o, a),
\\
 \dis\ol H_t := \inf_{a_0\in A_0} \sup_{a_1\in A_1} h_t(., a_0, a_1), 
 ~~ 
 \ul H_t := \sup_{a_1\in A_1}  \inf_{a_0\in A_0} h_t(., a_0, a_1).
 \end{array}\right.
 \end{equation}
Our main result requires the standard Isaacs condition 
\begin{equation}
\label{Isaacs}
\ol H = \ul H =: H.
\end{equation}

To prove existence of the game value, we shall use the viscosity theory of path--dependent PDEs (PPDE hereafter). Let $C^0(\Th, \dbR)$ denote the set of functions $u: \Th\longrightarrow \dbR$ continuous under $d_\infty$, $C^0_b(\Th, \dbR)$ the subset of bounded functions, and ${\rm UC}_b(\Th, \dbR)$ the subset of uniformly continuous functions. For any $L>0$, $\cP_L$ denotes the set of semimartingale measures on $\O$  such that the drift and diffusion of $X$ are bounded by $L$ and $\sqrt{2L}$, respectively, and $\cP_\infty := \cup_{L>0} \cP_L$. For any $t\in[0,T]$,  let $\mathcal T_t$ denote the set of $\mathbb F-$stopping times smaller than $t$. For a generic measurable set $E$, we also denote by $\dbL^0(\Th, E)$ the set of $\dbF-$progressively measurable $E-$valued functions. For any subset $\Pc$ of $\Pc_\infty$, we say that a property holds $\Pc-$q.s. if it holds $\P-$a.s. for any $\P\in\Pc$.

\vspace{0.5em}
For $\th=(t,\o)\in \Th$ and $\o'\in \O$, define
\begin{equation*}
(\o\otimes_t \o')_s := \o_s \1_{[0, t]} + (\o_t + \o'_{s-t} - \o'_0) \1_{(t, T]} (s),
~\zeta^{\th}(\o') := \zeta(\o\otimes_t \o'),
~\mbox{and}~
 \eta_s^{\th}(\o') := \eta_{t+s}(\o\otimes_t \o'),
 ~
  s\in[0,T-t],
 \end{equation*}
for an $\cF_T-$measurable random variable $\zeta$, and an $\dbF-$measurable process $\{\eta_s,s\in[0,T]\}$. We observe in particular that $\zeta^{\th}$ is $\cF_{T-t}-$measurable, and the process $\eta^{\th}$ is $\dbF-$adapted. Finally, for $\eps>0$, we introduce the stopping time $\ch_\eps(\o) := \inf\big\{t: d_\infty((t,\o), (0,0)) \ge \eps\big\}\wedge T$.

\vspace{0.5em}
For $u\in C^0(\Th, \dbR)$ and $\th:= (t,\o) \in\Th$, the super and sub--jets are defined by the subsets of $\dbR\times \dbR^d\times \dbS^d$: 
\begin{equation}
\label{cJ}
\left.\begin{array}{c}
\dis \ol\cJ^L u(\th) 
:= \Big\{ (\kappa, z, \g):
              -u(\th) = \sup_{\dbP\in \cP_L} \sup_{\t\in\mathcal T_{\ch_\eps}} \dbE^\dbP\big[\kappa \t + q^{z,\g}(X_\t) - u^\th_\t\big],
              ~\mbox{for some}~ \eps>0\Big\},
\\
\dis \ul\cJ^L u(\th) 
:= 
\Big\{ (\kappa, z, \g):
          -u(\th) = \inf_{\dbP\in \cP_L} \inf_{\t\in\mathcal T_{\ch_\eps}} \dbE^\dbP\big[\kappa \t + q^{z,\g}(X_\t) - u^\th_\t\big],
          ~\mbox{for some}~ \eps>0\Big\}.
\end{array}\right.
\end{equation}
where we used the notation $q^{z,\g}({\rm x}):= z\cd {\rm x} +  {\frac12}{\rm Tr}\big[\g  {\rm x} {\rm x}^\top\big],$ for all $({\rm x},z, \g)\in\dbR^d\times\R^d\times\dbS^d$. Now consider the following PPDE with generator $G: \Th \times \dbR\times \dbR^d\times \dbS^d \longrightarrow \dbR$
\begin{equation}
\label{PPDEG}
 - \pa_t u_t(\o) - G_t(\o, u, \pa_\o u, \pa^2_{\o\o} u)=0.
\end{equation}
The appropriate notion of path--derivatives in the above equation has been proposed first by Dupire {\rm \cite{dupire2009functional}}, and consists essentially in a directional derivative with respect to perturbations of the last value taken by the path. Since we are only interested in viscosity solutions, we do not need to detail this any further.

\begin{definition}
\label{defn-viscosity}
Let $u\in C^0(\Th, \dbR)$.
\\
{\rm (i)} For any $L>0$, we say $u$ is a $\cP_L-$viscosity super--solution $($resp. sub--solution$)$ of {\rm PPDE} \eqref{PPDEG} if, for any $\th\in\Th$ and any $(\kappa, z, \g) \in \ol\cJ^L u(\th)$ $($resp. in $ \ul\cJ^L u(\th))$, it holds that
\begin{eqnarray*}
- \kappa - G_t(\o, u_t(\o), z, \g) 
&\ge~(\mbox{\rm resp.}~ \le)& 
0.
\end{eqnarray*}
{\rm (ii)} $u$ is a $\cP_L-$viscosity solution of {\rm PPDE} \eqref{PPDEG} if it is both a $\cP_L-$viscosity super and sub--solution of \eqref{PPDEG}. Moreover, $u$ is a $\cP_\infty-$viscosity solution of {\rm PPDE} \eqref{PPDEG} if it is an $\cP_L-$viscosity solution for some $L>0$.
\end{definition}

We remark that in our weak formulation setting (or more precisely due to the feedback type of controls), the regularity of the game value is a very subtle question. As will be explained later on in the paper, our main need for {\it a priori} regularity is linked to our proof of the dynamic programming principle. This is in stark contrast with the usual control theory, for which proving dynamic programming in weak formulation requires merely to assume proper measurability of the coefficients, see for instance the recent papers by El Karoui and Tan \cite{karoui2013capacities,karoui2013capacities2}. Proving regularity in a strong formulation framework, either in the strategy against control setting or the delayed strategies one of Section \ref{sect:formulation}, is usually simply a matter of obtaining appropriate estimates on the moments of the controlled diffusion, which does not pose any specific problem in general. However in weak formulation, the fact that the players are controlling the distribution of $X$ makes matters much worse. Once again, this is the price to pay if one wishes to obtain general existence results for saddle--points, as shown in our previous examples.

\vspace{0.5em}

In order to bypass this difficulty and for the sake of clarity, we consider in this subsection a special case which is easier to deal with. A more general case will be studied in Section \ref{sect-extension} below.

\begin{assumption}
\label{assum-bsi}
 $b= b(t, a)$ and $\si = \si(t, a)$ are independent of $\o$.
%
\end{assumption}

Our first main result is the following. The proof is postponed to the next section.
 
\begin{theorem}
\label{thm-gamevalue} 
Let Assumptions \ref{assum-coef}, \ref{assum-bsi}, and Isaacs's condition \eqref{Isaacs} hold. Then
\\
{\rm (i)} The following path dependent Isaacs equation has a  viscosity solution $u\in \mbox{\rm UC}_b(\Theta,\dbR)$
 \begin{equation}
 \label{PPDE}
 -\pa_t u - H_t(\o,  \pa_\o u, \pa^2_{\o\o} u) =0,\;  t<T,\; u_T = \xi.
 \end{equation}
{\rm (ii)} Assume further that this {\rm PPDE} has a unique viscosity solution in $ \mbox{\rm UC}_b(\Theta,\dbR)$. Then $\ol V_0=\ul V_0=u_0(0)$.
 \end{theorem}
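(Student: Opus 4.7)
My strategy is to extend the game from time zero to an arbitrary initial condition $(t,\omega) \in \Theta$, obtain enough regularity of the path--dependent upper and lower values, establish a dynamic programming principle (DPP) for them, and then invoke the path--dependent viscosity framework of Ekren--Touzi--Zhang, together with the Isaacs condition \eqref{Isaacs}, to conclude that both values solve the PPDE \eqref{PPDE}; uniqueness in (ii) then forces them to coincide. This is essentially the scheme of Pham and Zhang \cite{pham2014two}, adapted to the present weak formulation.

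For any $\theta=(t,\omega) \in \Theta$, let $\cP^\theta(\alpha)$ denote the set of weak solutions on $[t,T]$ of the shifted SDE \eqref{SDE} starting from $\omega_t$ at time $t$, and set
\[
\ol V_t(\omega) := \inf_{\alpha^0 \in \cA^0}\sup_{\alpha^1 \in \cA^1}\sup_{\P\in\cP^\theta(\alpha)} \dbE^\P\!\Big[\xi + \int_t^T\! f_s(\alpha_s)\,\mathrm{d}s\Big],
\]
together with the symmetric definition of $\ul V_t(\omega)$; in particular $\ol V_0(0)=\ol V_0$ and $\ul V_0(0)=\ul V_0$. Under Assumption \ref{assum-bsi}, $b$ and $\sigma$ are independent of $\omega$, so the law of $X_\cdot - \omega_t$ under any element of $\cP^\theta(\alpha)$ does not depend on $\omega$. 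Hence the $\omega$--dependence of $\ol V$ and $\ul V$ is inherited purely from $\xi$ and $f$, and Assumption \ref{assum-coef} yields $\ol V, \ul V \in \mathrm{UC}_b(\Theta,\dbR)$, with modulus controlled by those of $\xi, f$ and the uniform bounds on $b,\sigma$.

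The crucial step is the dynamic programming principle: for every $\dbF$--stopping time $\tau$ with $t \le \tau \le T$,
\[
\ol V_t(\omega) = \inf_{\alpha^0}\sup_{\alpha^1}\sup_{\P\in\cP^\theta(\alpha)}\dbE^\P\!\Big[\int_t^\tau f_s(\alpha_s)\,\mathrm{d}s + \ol V_\tau(X_\cdot)\Big],
\]
and the symmetric identity for $\ul V$. I expect this to be the main obstacle: with feedback controls from both players in weak formulation, one must paste an $\eps$--optimal feedback strategy on $[\tau,T]$ depending on $X_{\cdot\wedge\tau}$ with an arbitrary admissible control on $[t,\tau]$, and check that the resulting control remains in $\cA^0,\cA^1$ and that the class of weak solutions is preserved under this concatenation. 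The $\omega$--independence in Assumption \ref{assum-bsi} is precisely what makes this work cleanly. The ``$\ge$'' inclusion comes from conditioning on $\cF_\tau$ using the regular conditional probabilities of elements of $\cP^\theta(\alpha)$, while ``$\le$'' requires a measurable $\eps$--optimal selection in the spirit of El Karoui--Tan \cite{karoui2013capacities2}, whose well--posedness is guaranteed by the uniform continuity obtained in the previous step.

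From the DPP, standard path--dependent viscosity arguments show that $\ol V$ is a $\cP_L$--viscosity subsolution of the PPDE with Hamiltonian $\ol H$ and supersolution with $\ul H$, for any $L$ exceeding the essential bounds of $b$ and $\tfrac12\sigma^2$; the symmetric conclusion holds for $\ul V$. Indeed, for a subsolution test at $\theta$ with $(\kappa,z,\gamma) \in \ul\cJ^L\ol V(\theta)$, freezing $\alpha^0 \equiv a_0$ on $[t,\ch_\eps]$ and applying the DPP with $\alpha^1$ ranging over controls producing measures in $\cP_L$ yields, after sending $\eps\downarrow 0$ and minimising over $a_0\in A_0$, the inequality $-\kappa - \ol H_t(\omega,z,\gamma) \le 0$; the supersolution property follows from the dual version of this localisation argument. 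Invoking \eqref{Isaacs} then shows that both $\ol V$ and $\ul V$ are $\cP_\infty$--viscosity solutions of \eqref{PPDE} in $\mathrm{UC}_b(\Theta,\dbR)$, which proves (i). The uniqueness assumption in (ii) forces $\ol V = \ul V = u$, and evaluation at $(0,0)$ gives $\ol V_0 = \ul V_0 = u_0(0)$, completing the proof.
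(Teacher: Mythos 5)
Your overall scheme (regularity, DPP, viscosity property, uniqueness) is the right one, but the central step is exactly where your argument breaks down: the dynamic programming principle for $\ol V$ and $\ul V$ themselves is \emph{not} available, and the paper explicitly states (Remark \ref{rem-DPP}) that only the partial inequality $\ol V_0 \le \inf_{\a^0}\sup_{\a^1}\sup_{\dbP}\dbE^\dbP\big[\ol V_t + \int_0^t f_s(\a_s)\,\mathrm{d}s\big]$ can be obtained. The opposite inequality requires, for an arbitrary \emph{fixed} feedback control $\a^0\in\cA^0$, a DPP for the inner control problem $\sup_{\a^1}\sup_{\dbP\in\cP(\a^0,\a^1)}$; since $\a^0$ is a merely measurable feedback map with no regularity in $X$, the shifted control $(\a^0)^{t,\o}$ varies with $\o$ in a completely uncontrolled way, so one cannot perform the measurable $\eps$--optimal selection and pasting that your sketch invokes. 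Conditioning on $\cF_\tau$ via regular conditional probabilities only bounds $J_0(\a,\dbP)$ by $\dbE^\dbP\big[S_t(\o,(\a^0)^{t,\o})+\int_0^t f_s(\a_s)\,\mathrm{d}s\big]$ with $S_t$ as in \eqref{S}; passing from there back to an admissible concatenated pair is precisely the open point, and your appeal to El Karoui--Tan does not apply because their framework does not fix one of the two feedback controls.

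The paper's key idea, which is absent from your proposal, is to introduce the auxiliary value $\ol V^{\rm pc}$ in which the first--moving player is restricted to piecewise constant controls of the form \eqref{pc} (and symmetrically $\ul V^{\rm pc}$). On each cell $E_{ij}$ of the partition the shifted control starts from the constant $a^0_{ij}$, which restores enough structure to carry out the selection and pasting arguments (Lemma \ref{lem-cP} and Steps 1.1--1.2 of the proof of Theorem \ref{thm-DPP}) and hence the full DPP, the time--regularity, and the viscosity property for $\ol V^{\rm pc}$ and $\ul V^{\rm pc}$. Part (i) is then proved by exhibiting $\ol V^{\rm pc}$ (not $\ol V$) as a viscosity solution, and part (ii) follows from the sandwich $\ul V^{\rm pc}\le\ul V\le\ol V\le\ol V^{\rm pc}$ combined with the uniqueness assumption, which forces $\ol V^{\rm pc}=\ul V^{\rm pc}$. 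Note also that in your supersolution test you would need $\psi(\a^0)\in\cA^1$ with $\cP(\a^0,\psi(\a^0))\neq\emptyset$ for an $\eps$--optimal $\a^0$; this is guaranteed in the paper only because $\a^0$ is piecewise constant, so that $\psi(\a^0)$ is piecewise constant as well.
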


\begin{remark}
\label{rem-gamevalue}
{\rm (i)} The uniqueness of viscosity solution is a highly nontrivial issue. In Ekren, Touzi and Zhang {\rm \cite{ekren2012viscosity}} and Pham and Zhang {\rm \cite{pham2014two}}, uniqueness was proved only in the case where $\si$ is uniformly non--degenerate and the dimension $d$ is either $1$ or $2$.

\vspace{0.5em}
{\rm (ii)} If one can prove the existence of viscosity solution in a smaller class $\mathfrak{C} \subset C( \Theta,\dbR)$ $($or more precisely show that the dynamic upper and lower value processes of the game are in $\mathfrak{C})$, then it is actually enough to prove the uniqueness of viscosity solution in the class $\mathfrak{C}$. Hence, Ren,Touzi and Zhang {\rm \cite{ren2015comparison}} proved the existence and uniqueness in a subclass of $\mbox{\rm UC}_b(\Theta,\dbR)$, under slightly stronger regularity assumptions on $\xi$ and $f$ in terms of $\o$. Using their result provides us with the existence of the game value, even when $\si$ is degenerate or $d\ge 3$.

\vspace{0.5em}
{\rm (iii)} Notice however that {\rm \cite{ren2015comparison}} requires that $b$ and $\si$ are independent of $\o$. On the other hand, Ekren and Zhang {\rm \cite{ekren2016pseudo}} have introduced a different type of viscosity solution called pseudo--Markovian viscosity solutions, and proved the corresponding uniqueness under weaker conditions. This will be interesting for us when we extend our game problem to path dependent $(b, \si)$ in Section \ref{sect-extension} below.

\vspace{0.5em}
{\rm (iv)} Finally, in the Markovian case, we can replace our uniqueness assumption with the uniqueness in the standard Crandall--Lions notion of viscosity solutions. However, notice that our uniqueness assumption is always weaker.
\end{remark}

\subsection{A verification result}
\label{sect-verification}
In this subsection we establish the existence of a saddle--point under the stronger condition that the Isaacs equation \eqref{PPDE} has additional regularity inspired from the $W^{1,2}-$Sobolev solutions in the PDE literature. We emphasise that in this subsection we do not require Assumption \ref{assum-bsi} to hold. This is to be expected as already in standard stochastic control problems, verification--type arguments do not require to prove beforehand the dynamic programming principle, which, once more, is the only result which requires regularity of the value function. Note that
\begin{equation}
\label{hatsi}
\mathrm{d}\la X\ra_t \;=:\; \widehat \si^2_t \mathrm{d}t, ~\mbox{holds}~\cP_\infty-\mbox{q.s.},
\end{equation}  
and can be defined without reference to a specific measure in $\Pc_\infty$ by classical results of Bichteler \cite{bichteler1981stochastic}.

\begin{definition}
 \label{defn-W12loc}
 Let $u \in \dbL^0(\Th, \dbR)$ and $\cP \subset \cP_\infty$. We say that $u\in W^{1,2}_{\rm loc}(\cP)$ if 
 \\
{\rm (i)}  $u$ is a uniformly integrable semimartinagle under any $\dbP\in \cP$;
 \\
{\rm (ii)} for some measurable processes $\pa_t u,\pa_\o u,\pa^2_{\o\o} u$ valued in $\mathbb{R},\mathbb{R}^d$, and $\mathbb{S}^d $, $u$ has the decomposition
 \begin{equation}
 \label{Ito}
  \mathrm{d} u_t 
  = \pa_t u_t \,dt + \pa_\o u_t \cd  \mathrm{d}X_t + {\frac12} {\rm Tr}\big[\pa^2_{\o\o} u_t \,d\la X\ra_t\big],
  ~ \cP-\mbox{\rm q.s.} ,
  \end{equation} 
with $\int_0^T \Big(\Big|\pa_t u_t + {\frac12} {\rm Tr}\big[\pa^2_{\o\o} u_t \widehat \si^2_t\big]\Big| +  |\pa_\o u_t|^2 \Big)  \mathrm{d}t  <\infty$, $\cP-$q.s.
 \end{definition}
 
 \begin{remark}
 \label{rem-W12}
{\rm (i)} By definition, any $u\in W^{1,2}_{\rm loc}(\cP)$ is continuous in $t$, $\cP-$q.s. However, unlike in Dupire {\rm \cite{dupire2009functional}} or in Ekren, Touzi and Zhang {\rm \cite{ekren2016viscosity, ekren2012viscosity}}, we do not require pointwise continuity in $\o$. In particular, this relaxation will allow us to cover {\rm BSDEs} and {\rm 2BSDEs} with measurable coefficients. 
 
 \vspace{0.5em}
{\rm (ii)} Clearly, \eqref{Ito} is closely related to the functional It\^o formula in {\rm \cite{dupire2009functional, ekren2016viscosity, ekren2012viscosity}}. However, our requirements here are weaker. In particular, we do not require the uniqueness of $\pa_t u$ and $\pa^2_{\o\o} u$.
 
 \vspace{0.5em}
{\rm (iii)} One can easily see that $\pa_\o u$ is unique $ \mathrm{d}\la X\ra_t \times  \mathrm{d}\dbP-${\rm a.s.},  and  $\pa_t u_t + {\frac12} {\rm Tr}[\widehat \si^2_t  \pa^2_{\o\o} u_t]$ is unique $ \mathrm{d}t\times  \mathrm{d}\dbP-${\rm a.s.}, for all $\dbP\in \cP$. If we set $\cP = \cP_L$ for some $L>0$, and require $\pa_t u_t$ and $\pa^2_{\o\o} u_t$ to be $\cP_L-${\rm q.s.} continuous, then it follows from Song {\rm \cite{song2012uniqueness}} that $\pa_t u$ and $\pa^2_{\o\o} u$ are unique in the $\cP_L-${\rm q.s.} sense. This additional quasi--sure continuity requirement leads to the $G-$Sobolev space $W^{1,2}_G$ introduced by Peng and Song {\rm\cite{peng2015g}}. 
 \end{remark}
  
 \begin{theorem}
\label{thm-verification} 
Let Assumption \ref{assum-coef} and Isaacs's condition \eqref{Isaacs} hold. Assume further that

\vspace{0.5em}
{\rm (i)} The {\rm PPDE} \eqref{PPDE} has a $W^{1,2}_{\rm loc}(\cP)-$solution $u$, where $\cP := \cup_{\a\in\cA} \cP(\a)$. In other words, there exist, not necessarily unique, $(\pa_t u, \pa_\o u, \pa^2_{\o\o} u)$ satisfying \eqref{Ito} and 
\begin{equation}
\label{PPDEqs}
-\pa_t u - H_t(\pa_\o u, \pa_{\o\o}^2 u)=0,\; \cP-{\rm q.s.}
\end{equation}
{\rm (ii)} The Hamiltonian $H_t(\o, \pa_\o u, \pa_{\o\o}^2 u)$ has a measurable saddle--point $\widehat \a_t(\o)$, such that $\cP-{\rm q.s.}$
\begin{align}\label{Isaacs-H-saddle}
h_t(\pa_\o u, \pa_{\o\o}^2u,  \widehat \a^0_t, a_1) 
&\le  h_t(\pa_\o u, \pa_{\o\o}^2u, \widehat\a^0_t, \widehat \a^1_t)  = H_t(\pa_\o u, \pa_{\o\o}^2 u)\le h_t(\pa_\o u, \pa_{\o\o}^2u, a_0, \widehat \a^1_t),\;\mbox{for all}\; a\in A.
\end{align}
{\rm (iii)} The set $\cP(\widehat \a)$ is not empty. 

\vspace{0.5em}
Then $\ol V_0 = u_0 = \ul V_0$ and  $\widehat \a$ is a saddle--point of the game.
 \end{theorem}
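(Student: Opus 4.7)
The plan is a classical verification argument, based on applying the It\^o--type decomposition \eqref{Ito} satisfied by $u$ along the controlled canonical process, and then exploiting the pointwise Isaacs saddle--point property \eqref{Isaacs-H-saddle}. Fix any $\alpha \in \Ac$ and any $\dbP \in \Pc(\alpha) \subset \Pc$. From \eqref{cPa}, under $\dbP$ one has $\widehat\sigma^2_t = \sigma^2_t(\alpha_t)$ and $dX_t = b_t(\alpha_t) dt + dM^\alpha_t$ with $M^\alpha$ a $\dbP$--martingale whose bracket is $\int_0^\cdot \sigma^2_s(\alpha_s) ds$. Substituting in \eqref{Ito} yields the $\dbP$--semimartingale decomposition
\[
du_t \;=\; \Big[\pa_t u_t + b_t(\alpha_t)\cdot \pa_\omega u_t + {\frac12}{\rm Tr}\big[\sigma^2_t(\alpha_t) \pa^2_{\omega\omega} u_t\big]\Big] dt + \pa_\omega u_t \cdot dM^\alpha_t.
\]
Recognising the bracket as $\pa_t u_t + h_t(\pa_\omega u, \pa^2_{\omega\omega} u, \alpha_t) - f_t(\alpha_t)$ and invoking the PPDE identity $\pa_t u = - H_t(\pa_\omega u, \pa^2_{\omega\omega} u)$ holding $\Pc-$q.s. by assumption (i), I would integrate from $0$ to $T$, use $u_T = \xi$ and take $\dbP$--expectation to obtain the fundamental identity
\[
J_0(\alpha, \dbP) \;=\; u_0 \,+\, \dbE^\dbP\bigg[\int_0^T \big(h_t(\pa_\omega u, \pa^2_{\omega\omega} u, \alpha_t) - H_t(\pa_\omega u, \pa^2_{\omega\omega} u)\big) dt\bigg].
\]

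Armed with this identity, I would plug in the saddle--point of assumption (ii). Choosing $\alpha = (\widehat \alpha^0, \alpha^1)$ with $\alpha^1 \in \Ac^1$ arbitrary and $\dbP \in \Pc(\widehat \alpha^0, \alpha^1)$, the left half of \eqref{Isaacs-H-saddle} gives $h_t \le H_t$ $dt\otimes d\dbP-$a.e., hence $J_0(\widehat \alpha^0, \alpha^1, \dbP) \le u_0$. Passing to the supremum over $\dbP$ and then over $\alpha^1$ yields $\ol V_0 \le \sup_{\alpha^1} \ol J_0(\widehat \alpha^0, \alpha^1) \le u_0$. Symmetrically, $\alpha = (\alpha^0, \widehat \alpha^1)$ together with the right half of \eqref{Isaacs-H-saddle} gives $\ul V_0 \ge u_0$. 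Combined with the trivial $\ol V_0 \ge \ul V_0$, this pins down $\ol V_0 = \ul V_0 = u_0$.

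For the saddle--point claim, assumption (iii) delivers some $\dbP \in \Pc(\widehat \alpha)$. Evaluating the identity at $\alpha = \widehat \alpha$ and using the middle equality of \eqref{Isaacs-H-saddle}, namely $h_t(\pa_\omega u, \pa^2_{\omega\omega} u, \widehat \alpha_t) = H_t(\pa_\omega u, \pa^2_{\omega\omega} u)$, I get $J_0(\widehat \alpha, \dbP) = u_0$ for every $\dbP \in \Pc(\widehat \alpha)$. Hence $\ol J_0(\widehat \alpha) = \ul J_0(\widehat \alpha) = u_0$, and the chain of inequalities \eqref{saddle} defining a saddle--point follows directly by combining this equality with the two one--sided bounds established above.

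The only substantive technical point in this scheme is the vanishing of $\dbE^\dbP\big[\int_0^T \pa_\omega u_t \cdot dM^\alpha_t\big]$ when taking expectations. The raw bound $\int_0^T |\pa_\omega u_t|^2 dt < \infty$, $\Pc-$q.s., from Definition \ref{defn-W12loc} only guarantees that this process is a local martingale. I would exploit part (i) of Definition \ref{defn-W12loc}, stating that $u$ is a uniformly integrable $\dbP$--semimartingale, together with the integrability of its finite variation part furnished by part (ii), to deduce by uniqueness of the Doob--Meyer decomposition that the martingale component is a genuine uniformly integrable martingale. A standard localisation along the hitting times of $\int_0^\cdot |\sigma_s(\alpha_s) \pa_\omega u_s|^2 ds$, combined with the boundedness of $b$, $\sigma$ and $f$ from Assumption \ref{assum-coef} and the uniform integrability of $u$, then lets me pass to the limit. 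This is the step where I expect all the real work of the proof to be concentrated; once it is in place, the rest is essentially algebraic manipulation of the identity above.
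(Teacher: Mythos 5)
Your proposal is correct and follows essentially the same verification argument as the paper: apply the It\^o--type decomposition \eqref{Ito} along the controlled process, substitute the PPDE \eqref{PPDEqs}, invoke the saddle--point inequalities \eqref{Isaacs-H-saddle}, and dispose of the stochastic integral by localisation together with the uniform integrability of $u$. The only (presentational, not substantive) difference is that the paper applies the relevant half of \eqref{Isaacs-H-saddle} at the level of the differential \emph{before} integrating, so that the drift is dominated by the bounded $f$ and no integrability of $\int_0^T\big(h_t-H_t\big)\,\mathrm{d}t$ is ever needed, whereas your exact identity requires using the sign of $h-H$ (monotone convergence) when passing to the limit along the localising sequence.
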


\proof For any $\a^0\in \cA^0$, and $\dbP\in \cP(\a^0, \widehat \a^1)$, it follows from \eqref{Ito} and \eqref{PPDEqs} that, $\dbP-\mbox{a.s.}$
 $$
 d u_t 
 = 
 \Big(b_t(\a^0_t, \widehat\a^1_t) \cd \pa_\o u 
        + {\frac12}{\rm Tr}\big[ \si^2_t(\a^0_t, \widehat\a^1_t) \pa_{\o\o}^2u\big]
        - H_t(\pa_\o u, \pa_{\o\o}^2 u)\Big) \mathrm{d}t 
        + \pa_\o u \cdot  \mathrm{d} M^{\a^0, \widehat \a^1}_t
 \ge 
 -f_t(\a^0_t, \hat\a^1_t)  \mathrm{d}t  +\pa_\o u \cdot  \mathrm{d} M^{\a^0, \widehat \a^1}_t.
 $$
Recall the integrability condition in Definition \ref{defn-W12loc} (ii), and denote $\t_n := T\wedge\inf\big\{t:  \int_0^t (|\pa_t u_s + {\frac12}{\rm Tr}[ \pa^2_{\o\o} u_s \hat \si^2_s]| +  |\pa_\o u_s|^2 ) ds \ge n\big\}$, so that  $\lim_{n\to\infty}\dbP[\t_n = T]=1$. As $\widehat \si^2_t =  \si^2_t(\a^0_t, \widehat\a^1_t)$, $\dbP-$a.s. we see by integrating and taking expectations above that $u_0 \le \dbE^\dbP\big[u_{\t_n} + \int_0^{\t_n} f_t(\a^0_t, \hat\a^1_t) \mathrm{d}t\big]$. Send now $n\longrightarrow \infty$, since $u$ is continuous in $t$ and uniformly integrable under $\dbP$, we obtain from the dominated convergence theorem under $\dbP$ that
 \begin{equation*}
 J_0(\a^0, \widehat\a^1, \dbP) 
 \;=\; \dbE^\dbP\bigg[\xi + \int_0^T f_t(\a^0_t, \widehat\a^1_t) \mathrm{d}t\bigg] 
 \;\ge\; 
 u_0.
 \end{equation*}
Following similar arguments, for any $\a^0\in \cA^0$, $\a^1\in \cA^1$, $\dbP^0\in \cP(\a^0, \widehat \a^1)$, $\dbP^1\in \cP(\widehat\a^0, \a^1)$, $\widehat \dbP\in \cP(\widehat\a)$, we obtain
 \begin{equation*}
 J_0(\widehat\a^0, \a^1, \dbP^1)  
 \;\le\; u_0 
 \;=\; 
 J_0(\widehat\a, \widehat\dbP) 
 \;\le\;  
 J_0(\a^0, \widehat\a^1, \dbP^0).
 \end{equation*}
This implies immediately that $\ol V_0 = u_0 = \ul V_0$ and  $\widehat \a$ is a saddle--point of the game.
 \qed
 
 \begin{remark}
 \label{rem-unique}
Notice that in Theorem \ref{thm-verification} (iii), we do not require $\cP(\widehat \a)$ to be a singleton. When it consists of several measures $\dbP$, the value $J_0(\widehat \a, \dbP)$ is actually independent of the choice of $\P$.
 \end{remark}
 
 \begin{remark}
 \label{rem-vW12}
In the Markovian case, it is not enough to assume that the {\rm PDE} has a Sobolev solution $u$ in the standard $W^{1,2}-$space. Indeed, consider the first inequality of \eqref{Isaacs-H-saddle} in that setting
\begin{equation}
\label{Isaacs-Markov}
h(t,x, D u, D^2u,  \widehat \a^0(t,x), a_1) \le   H(t,x, D u, D^2 u),\;  (t, x) \notin N,
\end{equation}
where $N$ is an appropriate set with Lebesgue measure $0$ on $[0,T]\times\mathbb R^d$. When $\sigma$ is uniformly elliptic, the generalised It\^o formula of Krylov {\rm \cite[Theorem 2.10.1]{krylov1980controlled}} allows to conclude as usual. However, in the degenerate case, equality fails to hold in general, see {\rm \cite[Theorem 2.10.2]{krylov1980controlled}}, which means that \eqref{Isaacs-Markov} does not imply that the corresponding inequality in \reff{Isaacs-H-saddle} holds $\dbP-$a.s. so that the arguments in Theorem \ref{thm-verification} would fail. In other words, even in the Markovian setting, with degenerate volatility, our quasi--sure characterisation of the Sobolev solution and the saddle--points of the Hamiltonian seems necessary for the verification arguments.
 \end{remark}
 
A saddle--point exists under our formulation in the following examples. We emphasise that none of the formulations in the existing literature allows to handle these examples, see Section \ref{sect:formulation}.

\begin{example}
\label{eg-fleming1989existence3}
Compare to Example \ref{eg-fleming1989existence2}.
Set $A_0:=[1,2]$, $A_1:=[0,1]$, $d:=1$, $b:=0$, $\si_t(a) := |a_0|$, $f_t(a) := \ol \zeta(\o_t) - 2 a_0 \ol \zeta(\o_t)$, $\xi(\o) := |\o_T|^2$.  That is, $\cP(\a)$ consists of weak solutions of 
\begin{eqnarray}
\label{egfleming1989existence3-X}
X_t = \int_0^t |\a^0_s(X_\cd)|  \mathrm{d}W_s,
&\mbox{and}&
J_0(\a, \dbP) := \dbE^{\dbP}\Big[ |X_T|^2 - \int_0^T \big[2\a^0_t\bar\zeta(X_t)  -  |\bar\zeta(X_t)|^2\big]  \mathrm{d}t\Big].
\end{eqnarray}
We remark that $f$ and $\xi$ here violate the boundedness assumption in Assumption \ref{assum-coef}. However, this condition is mainly for the convenience of the general path dependent PDEs, and in this particular example, it is not needed.  Nevertheless, in this case, we have $\ol V_0=\ul V_0 = v(0,0)$, where $v(t,x) = x^2+T-t$ is  the classical solution to the {\rm HJBI} equation \eqref{PPDE}. Again, the $($unique$)$ saddle--point of the Hamiltonian in the sense of \eqref{Isaacs-H-saddle} is provided by \eqref{eg3-saddle}. Define $\widehat \a_t(\o) := \widehat a_t(\o_t)$. Then $|\widehat \a_t(\o)| = \zeta(\o_t)$ and thus \eqref{egfleming1989existence3-X} becomes \eqref{BarlowSDE}. By Barlow {\rm \cite{barlow1982one}}, $\cP(\hat\a)$ is a singleton. This implies easily that $\widehat \a$ is indeed a saddle--point of the game.
\end{example}

The next example extends Example \ref{eg-weak1} to the degenerate case, namely  $c=0$ or $|\rho|=1$. We remark that the result of Hamad\`ene and Lepeltier \cite{hamadene1995zero}  does not apply in this case.

\begin{example}
\label{eg-weak2}
Consider the setting of Example \ref{eg-weak1}, but with $c=0$ or $|\rho|=1$.
\\
{\rm (i)} Let $\rho=1$ and $c>0$; the case $\rho=-1$ can be treated similarly. The {\rm HJBI} equation reduces to
\begin{equation*}
 -\pa_t v - {\frac12}c^2\big(\pa_{x_1x_1} v + \pa_{x_2x_2} v + 2 \pa_{x_1x_2} v
                                     \big) 
 +|\pa_{x_1} v| - |\pa_{x_2} v|=0,\; v(T,x_1,x_2) 
 =
 |x_1-x_2|^2,
\end{equation*}
which has a classical solution $v(t,x_1, x_2) = |x_1-x_2|^2$.  The saddle--points for the Hamiltonian are then given by
\begin{equation*}
\widehat a_0(t,x) := -{\rm sgn} (x_1-x_2) - \beta_1(t,x) \1_{\{x_1=x_2\}},~ \widehat a_1(t,x) := {\rm sgn} (x_2-x_1) +  \beta_2(t,x)\1_{\{x_1=x_2\}},
\end{equation*}
for any arbitrary measurable functions $\beta_1, \beta_2$ taking values in $[-1,1]$. Denote $\D X := X^2-X^1$. In order to prove that the two--dimensional {\rm SDE} $X^{i}_t = \int_0^t \big({\rm sgn} (\D X_s) - \beta_1(s, X_s) \1_{\{\D X_s = 0\}}\big) \mathrm{d}s  + c W_t$,~$i=0,1$, has a weak solution, we observe that the difference satisfies the {\rm ODE} $\D X_t =  \int_0^t (\beta_1+\beta_2)(s, X_s) \1_{\{\D X_s = 0\}}  \mathrm{d}s$, which has a solution if and only if $\beta_1 + \beta_2=0$. In this case, the unique solution is $\D X_t = 0$, i.e. $X^1=X^0$, and we are reduced to the ODE $X^0_t =  \int_0^t \beta_1(s, X^0_s,X^0_s)  \mathrm{d}s  + c W_t$, which also has a unique weak solution since $c>0$ and $\beta_1$ is bounded by definition. Therefore, the saddle--points of the game are
 \begin{eqnarray*}
 \widehat \a^0_t =\widehat \a^1_t = {\rm sgn}(X^1_t-X^0_t) + \beta(t, X^0_t) \1_{\{X^0_t = X^1_t\}},
 &\mbox{for an arbitrary measurable}&
 \beta: [0, T]\times \dbR\longrightarrow [-1, 1].
 \end{eqnarray*}
{\rm (ii)} When $c=0$, the last characterisation of the saddle--point still holds true provided that the {\rm ODE} 
$X^0_t = \int_0^t \beta(s, X^0_s)  \mathrm{d}s$
has a solution.
\end{example}
 
In the rest of this section, we investigate the question of existence of weak solutions for path--dependent SDEs, thus providing sufficient conditions for Theorem \ref{thm-verification} (iii). In the Markovian setting, we refer to Krylov \cite[Section 2.6]{krylov1980controlled} for existence of weak solutions of SDEs with measurable coefficients. We emphasise again that uniqueness is not required in our approach. 

\begin{definition}
\label{qscont}
An $\dbF-$mesurable process $\f$, with values in an Euclidean space, is called $\cP-${\rm q.s.} continuous if, for any $\eps>0$, there exist closed subsets $\{\O^\eps_t\}_{0\le t\le T} \subset \O$ such that  
\\
{\rm (i)} the process $\1_{\O^\eps_t}$ is $\dbF-$progressively measurable with $\sup_{\dbP\in \cP} \dbE^\dbP\Big[\int_0^T \1_{(\O^\eps_t)^c} \mathrm{d}t \Big] \le \eps$.
\\
{\rm (ii)} For each $t$,  $\f(t,\cd)$ is continuous in $\O^\eps_t$.  
\end{definition}

The  following existence result looks standard and its proof is postponed to the Appendix Section \ref{sect:Appendix}.

\begin{theorem}
\label{thm-qsSDE}
Assume $b:\Th \longrightarrow \dbR^d$ and $\si: \Th\longrightarrow \dbS^d$ are bounded by a certain constant $L$, are $\dbF-$measurable, and $\cP_L-${\rm q.s.} continuous. Then there is a weak solution to the {\rm SDE} $X_t = \int_0^t b(s, X_\cd) \mathrm{d}s + \int_0^t \si(s, X_\cd) \mathrm{d}W_s$.
\end{theorem}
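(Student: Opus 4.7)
The plan is to construct a weak solution by approximation with continuous coefficients, passing to the limit via tightness and weak convergence, with the quasi-sure continuity serving precisely to handle the discontinuity of $b$ and $\si$ in the limiting procedure.

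First, I would use the $\cP_L$-quasi-sure continuity to build a family of continuous approximations. For each $\eps>0$, the closed sets $\{\O^\eps_t\}_{0\le t\le T}$ provided by Definition \ref{qscont} carry continuous restrictions of $b(t,\cd)$ and $\si(t,\cd)$; by Tietze's extension theorem applied pointwise in $t$ (with care taken for $\dbF$-progressive measurability, e.g.\ by an explicit mollification/extension formula using distances to $\O^\eps_t$), extend these to bounded $\dbF$-progressively measurable continuous coefficients $b^\eps,\si^\eps$ on all of $\Theta$, each still bounded by $L$, and agreeing with $b,\si$ on $\O^\eps_t$.

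Second, for each $\eps$, I would further mollify $b^\eps,\si^\eps$ into Lipschitz coefficients $b^{\eps,n},\si^{\eps,n}$ converging locally uniformly to $b^\eps,\si^\eps$ as $n\to\infty$, so that the corresponding SDE admits a unique strong (hence weak) solution $\dbP^{\eps,n}$ on the canonical space. Boundedness by $L$ yields tightness of $\{\dbP^{\eps,n}\}_n$ by Kolmogorov's criterion, and a standard Stroock--Varadhan type argument—using the continuity of $b^\eps,\si^\eps$ to pass weakly to the limit in the associated martingale problem—produces a weak solution $\dbP^\eps$ of the SDE driven by $(b^\eps,\si^\eps)$. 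Each $\dbP^\eps$ belongs to $\cP_L$ since the drift and diffusion coefficients are bounded by $L$.

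Third, the family $\{\dbP^\eps\}_{\eps>0}\subset \cP_L$ is again tight on $\O$ by the same uniform boundedness, so I extract a sequence $\eps_k\downarrow 0$ with $\dbP^{\eps_k}\Rightarrow \dbP^\ast$, and $\dbP^\ast\in\cP_L$ since $\cP_L$ is weakly closed. To check that $\dbP^\ast$ solves the martingale problem \eqref{cPa} associated to $(b,\si)$, I would test against cylindrical functionals $\varphi\in C_b$ and rewrite
\[
\dbE^{\dbP^{\eps_k}}\Big[\varphi\big(X_{t_1\wedge\cd},\dots,X_{t_n\wedge\cd}\big)\Big(M^{b^{\eps_k}}_{t}-M^{b^{\eps_k}}_{s}\Big)\Big]=0,
\]
where $M^b_t := X_t -\int_0^t b_r(X_\cd)\mathrm{d}r$, and similarly for $(M^{b^{\eps_k}})^2 - \int_0^\cd (\si^{\eps_k})^2(X_\cd)\mathrm{d}r$. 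The differences $|b^{\eps_k}-b|$ and $|(\si^{\eps_k})^2-\si^2|$ vanish on $\O^{\eps_k}_t$ and are uniformly bounded, so the error is controlled by $2L\cdot \sup_{\dbP\in\cP_L}\dbE^\dbP\big[\int_0^T \1_{(\O^{\eps_k}_t)^c}\mathrm{d}t\big]\le 2L\eps_k\to 0$, uniformly in $k$ and for the limit $\dbP^\ast$ itself; quasi-sure continuity then lets me identify the weak limit of the drift and variance integrals with $\int_0^\cd b_r(X_\cd)\mathrm{d}r$ and $\int_0^\cd \si^2_r(X_\cd)\mathrm{d}r$ respectively (again by splitting according to whether $X$ lies in the closed continuity sets).

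The main obstacle is precisely this last passage to the limit: since $b$ and $\si$ are not pointwise continuous, neither $b(\cd,X_\cd)$ nor $\si(\cd,X_\cd)$ is a continuous functional of the path under the topology of uniform convergence, and one cannot invoke the continuous mapping theorem directly. The quasi-sure continuity is tailor-made to overcome this: the uniform measure-theoretic control $\sup_{\dbP\in\cP_L}\dbE^\dbP[\int_0^T\1_{(\O^\eps_t)^c}\mathrm{d}t]\le\eps$ ensures that the "bad set" where continuity fails is negligible simultaneously under every $\dbP^{\eps_k}$ and under $\dbP^\ast$, which is exactly what is needed to justify the exchange of weak limit and nonlinear functionals of the path.
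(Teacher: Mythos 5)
Your proposal is correct and follows essentially the same route as the paper: extend $(b,\si)$ to coefficients that are continuous in $\o$ for each $t$ and agree with the original ones on the quasi--sure continuity sets $\O^\eps_t$, solve the resulting SDE weakly, and then pass to the limit as $\eps\to 0$, using the uniform bound $\sup_{\dbP\in\cP_L}\dbE^{\dbP}\big[\int_0^T\1_{(\O^\eps_t)^c}\mathrm{d}t\big]\le\eps$ to control the martingale--problem errors simultaneously under all approximating measures and under the weak limit. The only (inessential) difference lies in the intermediate existence result for continuous coefficients, where the paper uses a piecewise--frozen Euler scheme together with Zheng's tightness criterion and the compact sets $E_m$, rather than your further mollification to Lipschitz coefficients; both are standard and interchangeable here.
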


We finally discuss the desired regularity for saddle--points of the Hamiltonian, for which we define $W^{1,2}(\cP)$ as the subset of $W^{1,2}_{\rm loc}(\cP)$ consisting of processes $u$ such that $\pa_t u$, $\pa_\o u$, and $\pa^2_{\o\o} u$ are $\Pc-$q.s. continuous.

 \begin{theorem}
\label{thm-verification2} 
Let Assumption \ref{assum-coef} holds, $b, \si$ continuous in $a$, and assume further the Isaacs condition \eqref{Isaacs} holds with saddle-point $\widehat \a_t(\o, z,\g)$
 \begin{equation*}
 h_t(\o, z,\g,   \widehat \a^0_t, a_1) 
 \le  h_t(\o, z,\g, \widehat\a^0_t, \widehat \a^1_t)  = H_t(\o, z,\g)\le h_t(\o, z,\g, a_0, \widehat \a^1_t),\; \mbox{for all}\; a\in A,\;  
 \cP- {\rm a.s.},
 \end{equation*}
such that $\widehat \a$ is continuous in $(z,\g)$, and uniformly $\cP-$continuous in $(t,\o)$, i.e. $\widehat \a(\cd, z,\g)$ is $\cP-$continuous for all $(z,\g)$, with a common $\{\O^\eps_t\}_{0\le t\le T}$ for all $(z,\g)$.

 \vspace{0.5em}
 
Let $u$ be a $W^{1,2}(\cP)-$solution of the {\rm PPDE} \eqref{PPDE} with $\cP := \cup_{\a\in\cA} \cP(\a)$. Then $\ol V_0 = u_0 = \ul V_0$ and  $\a^*_t(\o) := \widehat \a_t\big(\o, \pa_\o u(t,\o), \pa^2_{\o\o} u(t,\o)\big)$ is a saddle-point of the game.
 \end{theorem}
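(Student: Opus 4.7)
The plan is to reduce the statement to the already-established verification result, Theorem \ref{thm-verification}, with $\widehat \a$ there playing the role of $\a^*$ here. Accordingly, I must verify the three hypotheses of that theorem for $\a^*$. Hypothesis (i) is given since $u\in W^{1,2}(\cP) \subset W^{1,2}_{\rm loc}(\cP)$ solves the PPDE \eqref{PPDE} by assumption. Hypothesis (ii) is immediate by construction: substituting $(z,\g) = (\pa_\o u_t(\o), \pa^2_{\o\o} u_t(\o))$ into the saddle--point relation for the Hamiltonian gives pointwise, hence $\cP-$q.s., the chain \eqref{Isaacs-H-saddle} with $(\a^{*,0}_t,\a^{*,1}_t)$ in place of $(\widehat\a^0_t,\widehat\a^1_t)$. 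Measurability of $\a^*$ follows from the continuity of $\widehat\a$ in $(z,\g)$ combined with the $\dbF-$measurability of $(\pa_\o u, \pa^2_{\o\o} u)$ built into the definition of $W^{1,2}(\cP)$.

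The main work is to verify hypothesis (iii), namely $\cP(\a^*)\neq \emptyset$. The idea is to invoke the existence theorem for weak solutions to path--dependent SDEs, Theorem \ref{thm-qsSDE}, applied to the coefficients $\widetilde b_t(\o) := b_t(\o,\a^*_t(\o))$ and $\widetilde \si_t(\o) := \si_t(\o,\a^*_t(\o))$. These are bounded by Assumption \ref{assum-coef} and $\dbF-$progressively measurable, so the only substantive requirement to check is that they be $\cP_L-$q.s. continuous for a suitable $L$. Since $b,\si$ are continuous in $a$ and uniformly $d_\infty-$continuous in $(t,\o)$, the task reduces to establishing $\cP-$q.s. continuity of $\a^*$ itself.

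The key, and most delicate, step is therefore this composition argument. Given that $(\pa_\o u,\pa^2_{\o\o} u)$ are $\cP-$q.s. continuous and $\widehat\a$ is continuous in $(z,\g)$ and uniformly $\cP-$continuous in $(t,\o)$ with a family $\{\O^\eps_t\}_{0\le t\le T}$ common to all $(z,\g)$, I would fix $\eps>0$, intersect $\{\O^\eps_t\}$ with the closed sets provided by the $\cP-$q.s. continuity of $(\pa_\o u,\pa^2_{\o\o} u)$, and use the uniformity in $(z,\g)$ to transport continuity through the composition. The hypothesis that $\{\O^\eps_t\}$ is common to all $(z,\g)$ is precisely what prevents a circular dependence between the section sets and the values $(\pa_\o u_t(\o),\pa^2_{\o\o} u_t(\o))$. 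Once $\cP(\a^*)\neq \emptyset$ is obtained, Theorem \ref{thm-verification} applies verbatim and delivers both $\ol V_0 = u_0 = \ul V_0$ and the saddle--point property of $\a^*$. The main obstacle is thus entirely in the technical handling of $\cP-$q.s. continuity under composition, and the quantifier structure in the hypothesis on $\widehat\a$ has been tailored precisely to make this step work.
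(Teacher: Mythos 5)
Your proposal is correct and follows exactly the paper's own argument: establish $\cP-$q.s.\ continuity of $\a^*$ by composing the $\cP-$q.s.\ continuity of $(\pa_\o u,\pa^2_{\o\o}u)$ with the uniform $\cP-$continuity of $\widehat\a$ (using the common family $\{\O^\eps_t\}$), deduce $\cP-$q.s.\ continuity of the composed coefficients via continuity of $b,\si$ in $a$, invoke Theorem \ref{thm-qsSDE} to get $\cP(\a^*)\neq\emptyset$, and conclude by Theorem \ref{thm-verification}. Your write-up is in fact slightly more explicit than the paper's about the intersection of the section sets in the composition step.
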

 
\proof  By our regularity assumptions on $u$ and $\widehat\alpha$, we see that  $\a^*_t(\o) := \widehat \a_t\big(\o, \pa_\o u(t,\o), \pa^2_{\o\o} u(t,\o)\big)$ is $\cP-$q.s. continuous. Together with the continuity of $b$ and $\sigma$ in $a$, this implies that $\widehat b(t,\o) := b_t(\o, \a_t^*(\o))$ and $\widehat \si(t,\o) := \si_t(\o, \a_t^*(\o))$ are also $\cP-$q.s. continuous. Now it follows from Theorem \ref{thm-qsSDE} that $\cP(\a^*) \neq \emptyset$.  Therefore, we can apply Theorem \ref{thm-verification} to conclude. 
\qed

\subsection{Second order backward SDE characterisation}
\label{sect-2BSDE}

For $\a^0\in \cA_0$ fixed, The stochastic control problem $\ol J_0(\a^0)$, defined in \eqref{J0a}, can be characterized by the corresponding second order BSDEs, as introduced by Cheridito, Soner, Touzi and Victoir \cite{cheridito2007second}, and further developed by Soner, Touzi and Zhang \cite{soner2012wellposedness} and Possama\"i, Tan and Zhou \cite{possamai2015stochastic}. 

\subsubsection{The general case}

We emphasise that in this subsection, we do not require Assumption \ref{assum-bsi} to hold. 

\begin{assumption}
\label{assump:2bsde} 
{\rm (i)} $b= \si \l$ for some $\l: \Th \times A\to \dbR^d$ progressively measurable and bounded.
\\
{\rm (ii)} For every $(t,\omega)\in\Th$ and $a=(a_0,a_1)\in A$, the following two sets are convex
$$
\big\{(b,\sigma,f)(t,\omega,a_0', a_1):~a_0' \in A_0\big\},\q \big\{(b,\sigma,f)(t,\omega,a_0, a_1'):~a_0' \in A_1\big\}.
$$
\end{assumption}

Assumption \ref{assump:2bsde} (i) is made in order to focus on the main arguments, and can be relaxed at the price of more technical developments highlight more our core ideas. Assumption \ref{assump:2bsde} $(ii)$ is mainly technical, and will be explained in the proof of Lemma \ref{lem-weak} below.

\vspace{0.5em}

Recall  $\ol \cA$, $\cP(\a)$, and \reff{cA} from Subsection \ref{sect:setting}. We introduce the corresponding terms when $b=0$
\begin{eqnarray}
\nonumber
&\dis \Pc_0(\alpha):= \Big\{\dbP:~X_.=\int_0^.\sigma_s\big(X_\cdot,\a(X_\cdot)\big) \mathrm{d}W_s,
                                                   ~\dbP-\mbox{a.s.}
                                   \Big\},
~~
\dis \Pc_0^0(\a^1):=\bigcup_{\alpha^0\in \ol\Ac^0}\Pc_0(\alpha),
~~
\Pc_0^1(\a^0):=\bigcup_{\alpha^1\in \ol\Ac^1}\Pc_0(\alpha),&\\
&\dis \Ac_0^0=\big\{\a^0\in\overline\Ac^0: \Pc^1_0(\alpha^0)\neq\emptyset\big\},\; \Ac_0^1:=\big\{\a^1\in\overline\Ac^1: \Pc^0_0(\a^1)\neq\emptyset\big\},\; \cA_0 := \cA^0_0\times \cA^1_0.&\label{driftlessSDEsquare2}
\end{eqnarray}
For every pair $(\alpha,\P)\in\Ac_0\times \Pc_0(\alpha)$, let $W^{\dbP, \a}$ be an $\P-$Brownian motion corresponding to the driftless SDE in the definition of $\Pc_0(\alpha)$. 
For $\lambda$ satisfying Assumption \ref{assump:2bsde} $(i)$, we denote $\lambda^\alpha:=\lambda\big(X_\cdot,\a(X_\cdot)\big)$, and we introduce an equivalent measure $\P^{\a}$, together with the corresponding Brownian motion (by Girsanov theorem):
\begin{eqnarray*}
\frac{ \mathrm{d}\P^{\a}}{ \mathrm{d}\P}:=\exp\Big(\int_0^T\lambda^\alpha_s\cdot  \mathrm{d}W_s^{\P,\alpha}-\frac12\int_0^T\big|\lambda^\alpha_s\big|^2 \mathrm{d}s\Big),
&\mbox{and}&
W^{\P^{\a}}:= W^{\P,\alpha}-\int_0^.\lambda^\alpha_s \mathrm{d}s,
\end{eqnarray*}
so that the dynamics of the canonical process under $\dbP^\alpha$ are given by:
 \begin{eqnarray*}
 \mathrm{d}X_s
 &=&
 b_s\big(X_\cdot,\a_s(X_\cdot)\big) \mathrm{d}s+\sigma_s\big(X_\cdot,\a(X_\cdot)\big) \mathrm{d}W^{\P^{\a}}_s,
 ~~\P^\a-\mbox{a.s.}
 \end{eqnarray*}
The upper and lower values of the game can then be rewritten as
\begin{eqnarray*}
&\dis \ol V_0^0 \;:=\; \inf_{\a^0\in \cA^0_0} \sup_{\a^1\in  \cA^1_0} \ol J_0(\alpha), 
  ~~\mbox{and}~~
  \ul V_0^0 := \sup_{\a^1\in  \cA^1_0}  \inf_{\a^0\in \cA^0_0}\ul J_0(\alpha),
 &\\
 &\dis \mbox{where}~~  
 \ol J_0(\alpha):=\sup_{\P\in {\color{black}\Pc_0(\a)}}\E^{\P^{\a}}\Big[\xi+\int_0^Tf_s(\a_s) \mathrm{d}s\Big],
 ~~
 \ul J_0(\alpha):=\inf_{\P\in {\color{black}\Pc_0(\a)}}\E^{\P^{\a}}\Big[\xi+\int_0^Tf_s(\a_s) \mathrm{d}s\Big].
 &
\end{eqnarray*}
We next introduce the nonlinear generators
 \begin{eqnarray}\label{2BSDEF}
 \ol F_t(\o,z,a_0)
 \;:=\;
 \sup_{a_1\in A_1(t,\omega,a_0)} F_t(\o, z, a_0, a_1),
 &\mbox{and}&
  \ul F_t(\o,z,a_1)
  \;:=\;
  \inf_{a_0\in A_0(t,\o,a_1)} F_t(\o, z, a_0, a_1),
\end{eqnarray}
where, for the  $\widehat\si$ defined in \reff{hatsi},
 \begin{eqnarray*}
 F_t(\o, z, a):= b_t(\o,a)\cdot z+f_t(\o,a),
 &\mbox{and}&
 A_i(t,\omega,a_{1-i})
 :=
 \big\{a_i\in A_i: \big(\sigma_t\sigma_t^\top\big)(\omega,a_0,a_1)=\widehat\si^2_t(\o)
 \big\},
 ~~i=0,1.
 \end{eqnarray*}
The second order backward SDEs (2BSDE, hereafter)  which will serve to represent the upper and lower values is defined by the following representation of the r.v. $\xi$ 
\footnote{We remark that 2BSDEs \reff{eq:2bsde4} and \reff{eq:2bsde5} do not include an orthogonal martingale term, even though the involved probabilities $\dbP$ may not satisfy the predictable martingale representation property.  Here we will use the so called optional decomposition rather than the Doob-Meyer one to obtain the processes $\ul K^{\a^1}$ and   $\ol K^{\a^0}$, see the notion of "saturated" solutions of 2BSDEs in  Possama\"i, Tan and Zhou \cite{possamai2015stochastic}.  }: 
 \begin{eqnarray}\label{eq:2bsde4}
 \ul Y^{\a^1}_t
 &=&
 \xi+\int_t^T\ul F_s\big ( \ul Z_s^{\a^1},\a^1_s\big) \mathrm{d}s
 -\int_t^T\ul Z_s^{\a^1}\cdot  \mathrm{d}X_s-\int_t^T \mathrm{d} \ul K^{\a^1}_s,
 ~\P-\mbox{a.s., for all}~\P\in\Pc^0_0(\a^1),\\
 \label{eq:2bsde5}
 \ol Y_t^{\a^0}
&=&
 \xi+\int_t^T\ol F_s\big(\ol Z_s^{\a^0},\a^0_s\big) \mathrm{d}s
 -\int_t^T\ol Z_s^{\a^0}\cdot  \mathrm{d}X_s+\int_t^T \mathrm{d} \ol K^{\a^0}_s,
 ~\P-\mbox{a.s., for all}~\P\in\Pc_0^1(\a^0).
 \end{eqnarray}

\begin{definition} \label{def:2BSDE}
We say that $(\ul Y^{\a^1},\ul Z^{\a^1})$  is a solution of the {\rm 2BSDE} \eqref{eq:2bsde4} if, for some $p>1$,

\vspace{0.3em}
\noindent {\rm (i)} $\ul Y^{\a^1}$  is a c\`adl\`ag and {$\F^{\Pc^0_0(\a^1)+}-$}optional process, with
$\|\ul Y^{\a^1}\|^p_{\mathbb D^0_p}:=\sup_{\P\in\Pc^0_0(\a^1)}\E^{\P}\left[\sup_{t\le T}|\ul Y^{\a^1}_t|^p\right]<\infty$.

\noindent 
{\rm (ii)} $\ul Z^{\a^1}$  is an $\F^{\Pc^0_0(\a^1)}-$predictable process, with
$\|\ul Z^{\a^1}\|^p_{\H^0_p}:=\sup_{\P\in{\Pc^0_0(\a^1)}}\E^{\P}\big[\big(\int_0^T(Z^{\a^1}_t)^\top\widehat\sigma_t^2Z^{\a^1}_t\mathrm{d}t\big)^{\frac p2}\big]<\infty$.

\noindent 
{\rm (iii)} For all $\dbP\in \Pc^0_0(\a^1)$,  the process 
\begin{equation}
\label{KaP}
\ul K^{\a^1, \dbP}_t :=  \ul Y^{\a^1}_t -  \ul Y^{\a^1}_0+\int_0^t \ul F_s\big ( \ul Z_s^{\a^1}, \a^1_s\big) \mathrm{d}s
 -\int_0^t \ul Z_s^{\a^1}\cdot  \mathrm{d}X_s,\; t\in[0,T],
\end{equation}
 is $\dbP-$optional, non--decreasing, and satisfies the minimality condition: 
 \begin{equation}
 \label{minimality2}
 \ul K^{\a^1,\P}_t
 =
 \underset{ \P' \in \Pc^0_0(\a^1; t,\P,\F^{+}) }{ {\rm essinf}^{\P} }
 \E^{\P'}\big[ \ul K^{\a^1,\P'}_T\big|\Fc_{t}^{\P+}\big], 
  ~0\leq t\leq T,\; \P-{\rm a.s.},
 \end{equation}
where  $\Pc^0_0(\a^1; t,\P,\F^{+}) := \left\{\P'\in\Pc^0_0(\a^1): \P[E]=\P'[E]~\mbox{for all}~E\in\Fc_t^{+}\right\},\;  t\in[0,T].$ 
\\
The solution $(\ol Y^{\a^0}, \ol Z^{\a^0})$ of the {\rm 2BSDE} \reff{eq:2bsde5} is defined similarly.
\end{definition}

We are now ready for the main result of this subsection, the proof is postponed to Section \ref{sect:2BSDE} below.

\begin{theorem}\label{thm:2bsde}
{\rm (i)} Under Assumptions \ref{assum-coef} and \ref{assump:2bsde}, the {\rm 2BSDEs} \eqref{eq:2bsde4} and \eqref{eq:2bsde5} have unique solutions, with  
 \begin{eqnarray*}
 \ol V_0 \;:=\; \inf_{\a^0\in \cA^0_0}\sup_{\P\in\Pc_0^1(\a^0)}\E^{\P^\a}\Big[\ol{Y}_0^{\a^0}\Big], 
 &\mbox{and}&
 \ul V_0 \;:=\; \sup_{\a^1\in  \cA^1_0}  \inf_{\P\in\Pc^0_0(\a^1)}\E^{\P^\a}\Big[\ul{Y}_0^{\a^1}\Big] ;
 \end{eqnarray*}
{\rm (ii)}  If in addition, for some $\widehat\a =(\widehat \a^0, \widehat \a^1)\in \Ac_0$, and $\widehat{\mathbb P} \in\cP_0(\widehat\a)$, 
\begin{equation}
\label{K0}
\ul F_s\big ( \ul Z_s^{\widehat\a^1}, \widehat\a^1_s\big)=\ol F_s\big(\ol Z_s^{\widehat \a^0},\widehat\a^0_s\big),\; \mathrm{d}s\otimes \mathrm{d}\widehat \P-{\rm a.e.}, \; \text{and}\; \ol K^{\widehat\a^0}_T = \ul K^{\widehat\a^1}_T=0,\; \widehat \P-\mbox{a.s.}
\end{equation}
then the game value exists, i.e. $ \ol V_0 = \ul V_0$, and $\widehat \a$ is a saddle--point of the game.
\end{theorem}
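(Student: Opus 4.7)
The plan is to establish well--posedness of the two 2BSDEs via the non--dominated theory of Possama\"i, Tan and Zhou \cite{possamai2015stochastic}, identify their initial values with the respective one--sided game values, and then use condition (ii) to close the saddle--point. Under Assumption \ref{assum-coef} combined with $b=\sigma\lambda$ from Assumption \ref{assump:2bsde}~(i), the generators $\ul F,\ol F$ defined in \eqref{2BSDEF} are $\F-$progressively measurable, bounded at $z=0$, and Lipschitz in $z$ uniformly in $(t,\o)$, thanks to the identity $\sigma\sigma^\top(\cdot,\a)=\widehat\sigma^2$ holding $\P-$a.s.\ on $\Pc_0(\a)$. The families $\Pc_0^0(\a^1)$ and $\Pc_0^1(\a^0)$ are stable under concatenation and equivalent change of measure, and are saturated in the sense of \cite{possamai2015stochastic}, since they are generated by freely choosing the opponent's feedback. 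The main well--posedness result from that paper then yields unique solutions $(\ul Y^{\a^1},\ul Z^{\a^1})$ and $(\ol Y^{\a^0},\ol Z^{\a^0})$ in the sense of Definition \ref{def:2BSDE}.

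For the representations in part (i), I would use the stochastic--control characterisation of 2BSDE solutions which, from the minimality condition \eqref{minimality2}, yields
\[
\ol Y_0^{\a^0}\;=\;\esup^{\P}_{\P'\in\Pc^1_0(\a^0;\,0,\P,\F^+)} \E^{\P'}\big[Y^{\a^0,\P'}_0\big],
\]
where $Y^{\a^0,\P'}$ is the standard BSDE under $\P'$ with driver $\ol F_s(\cdot,\a^0_s)$ and terminal $\xi$. The convexity hypothesis \ref{assump:2bsde}~(ii) provides a Filippov--type measurable selector $\widehat a_1(t,\o,z)$ attaining the supremum in $\ol F$ on the slice $A_1(t,\o,\a^0_t)$, which in turn produces a feedback $\a^1\in\Ac^1_0$ such that $\P'\in\Pc_0(\a^0,\a^1)$ and the BSDE driver equals $F_s(\cdot,\a_s)$ $\mathrm{d}s\otimes\mathrm{d}\P'-$a.e. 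After a Girsanov change of measure to $\P'^{\a}$, the BSDE value becomes $\E^{\P'^{\a}}[\xi+\int_0^T f_s(\a_s)\mathrm{d}s]$. Taking the supremum over $\P$ (which exhausts $\Pc^1_0(\a^0)$) gives $\sup_{\P}\E^{\P^\a}[\ol Y_0^{\a^0}]=\ol J_0(\a^0)$, and the infimum over $\a^0$ yields the formula for $\ol V_0$. The representation of $\ul V_0$ is obtained by the symmetric argument using essential infima and measurable selection in $\ul F$.

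For part (ii), fix $(\widehat\a,\widehat\P)$ as in the statement. Since $\ol K^{\widehat\a^0,\widehat\P}$ is non--decreasing with $\ol K^{\widehat\a^0,\widehat\P}_0=0$ and $\ol K^{\widehat\a^0,\widehat\P}_T=0$ by \eqref{K0}, it vanishes identically $\widehat\P-$a.s., and symmetrically $\ul K^{\widehat\a^1,\widehat\P}\equiv 0$. On the support of $\widehat\P$ both 2BSDEs thus reduce to standard BSDEs, and from the identity $\ol F_s(\ol Z^{\widehat\a^0}_s,\widehat\a^0_s)=\ul F_s(\ul Z^{\widehat\a^1}_s,\widehat\a^1_s)$ in \eqref{K0}, BSDE uniqueness gives $\ol Y^{\widehat\a^0}=\ul Y^{\widehat\a^1}$ $\widehat\P-$a.s. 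Integrating under $\widehat\P^{\widehat\a}$, using $\mathrm{d}X_s=b_s(\widehat\a_s)\mathrm{d}s+\sigma_s(\widehat\a_s)\mathrm{d}W^{\widehat\P^{\widehat\a}}_s$ together with the fact that $\widehat\a$ saturates both Hamiltonians along this trajectory, yields
\[
\E^{\widehat\P^{\widehat\a}}\big[\ol Y_0^{\widehat\a^0}\big]=\E^{\widehat\P^{\widehat\a}}\big[\ul Y_0^{\widehat\a^1}\big]=J_0(\widehat\a,\widehat\P^{\widehat\a}).
\]
The saddle relation at the Hamiltonian level, combined with the representation from part~(i), also implies that $\widehat\a^1$ attains the supremum in $\ol J_0(\widehat\a^0)$ and $\widehat\a^0$ the infimum in $\ul J_0(\widehat\a^1)$, whence $\ol V_0\le \ol J_0(\widehat\a^0)=J_0(\widehat\a,\widehat\P^{\widehat\a})=\ul J_0(\widehat\a^1)\le\ul V_0$. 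Together with the trivial inequality $\ol V_0\ge\ul V_0$, this gives the common value, and Proposition \ref{prop-saddle} identifies $\widehat\a$ as a saddle--point.

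The main obstacle I expect is the identification step in part~(i): turning the abstract $\esup$ characterisation of $\ol Y_0^{\a^0}$ over the non--dominated family $\Pc^1_0(\a^0)$ into the genuinely game--theoretic supremum over $\a^1$ and $\P$. The convexity in Assumption \ref{assump:2bsde}~(ii) is precisely what allows the pointwise supremum in $\ol F$ to be realised by an $\F-$predictable feedback $\a^1$; conversely, checking that every admissible $\P'\in\Pc^1_0(\a^0)$ corresponds to such a selector requires a careful use of the saturation properties of the underlying families of measures and of Girsanov's theorem, and is where the bulk of the technical work should lie.
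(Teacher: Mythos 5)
Your overall architecture for part (i) — well--posedness via Possama\"i--Tan--Zhou, then identification of the 2BSDE initial values with the one--sided game values — matches the paper's, and your part (ii) argument ($K\equiv 0$ from monotonicity, reduction to a single BSDE under $\widehat\P$ via \eqref{K0}, then comparison to show $\widehat\a^0,\widehat\a^1$ are optimal for \eqref{ubar2}) is sound and in fact more explicit than what the paper writes, which only discusses this step formally in Remark \ref{rem-2BSDE3}. However, there is a genuine gap in part (i), precisely at the point you flag as "where the bulk of the technical work should lie", and your proposed way of filling it misfires. You assert that the families $\Pc^0_0(\a^1)$, $\Pc^1_0(\a^0)$ are "stable under concatenation \ldots since they are generated by freely choosing the opponent's feedback". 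This is the dynamic programming requirement (\cite[Assumption 2.1]{possamai2015stochastic}) and it is not automatic for feedback--control families: conditioning and pasting weak solutions of a feedback SDE, together with the measurable selection of $\eps$--optimizers, is exactly the step that the existing DPP literature (El Karoui--Tan) only covers for the \emph{relaxed} weak formulation on the enlarged canonical space $\cC([0,T],\R^d)\times\dbA$. The paper's entire Section \ref{sect:2BSDE}.1 (Lemma \ref{lem-weak}) exists to bridge this: it shows $\cP=\cP^0_0(\a^0)$ by taking an arbitrary relaxed control, projecting the coefficients onto $\F^X$ via Wong's mimicking theorem, and then invoking the convexity of the ranges in Assumption \ref{assump:2bsde}~(ii) to realise the projected $(\widetilde b^\P,\widetilde\si^\P,\widetilde f^\P)$ by an $\F^X$--progressive feedback control. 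That is the true role of the convexity hypothesis.

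By contrast, you deploy Assumption \ref{assump:2bsde}~(ii) to produce "a Filippov--type measurable selector $\widehat a_1(t,\o,z)$ attaining the supremum in $\ol F$". Convexity of the range $\{(b,\si,f)(t,\o,a_0,a_1'):a_1'\in A_1\}$ does not give attainment: no compactness or closedness of $A_1$ (or of the slices $A_1(t,\o,a_0)$) is assumed, so the supremum defining $\ol F$ need not be achieved, and an exact maximising feedback need not exist. The identification $\sup_{\P}\E^{\P^\a}[\ol Y_0^{\a^0}]=\ol J_0(\a^0)$ should instead be run with $\eps$--optimal measurable selectors (one inequality) and the BSDE comparison theorem applied driver--by--driver for each fixed $\a^1$ (the other inequality), as in \cite[Proposition 4.6]{cvitanic2015dynamic}; neither direction needs exact attainment. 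So the two concrete repairs are: (a) replace the claimed concatenation stability by the equivalence with the relaxed formulation (Lemma \ref{lem-weak}) so that the El Karoui--Tan DPP actually applies, and (b) replace the exact argmax selector by $\eps$--selectors in the representation step.
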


We notice that Theorem \ref{thm:2bsde} does not require Assumption \ref{assum-bsi}. Moreover, Theorem \ref{thm:2bsde} (ii) does not require \reff{Isaacs} directly. However, the conditions in \reff{K0} are rather strong, and essentially imply \eqref{Isaacs}, see Remark \ref{rem-2BSDE3} below for more details.  

\subsubsection{A simpler case: volatility controlled by one player only}

In this subsection we improve Theorem \ref{thm:2bsde} when only one player, say Player 0, controls the volatility, i.e. $\si_t(\o,a) = \si_t(\o, a_0)$.\footnote{See Mastrolia and Possama\"i {\rm \cite{mastrolia2015moral}}, Hern\'andez Santib\'a\~nez and Mastrolia {\rm \cite{hernandez2018moral}}, and Sung {\rm \cite{sung2015optimal}} for a similar setting in the context of moral hazard problems under uncertainty.}
Then, 
\reff{Hamiltonian} reduces to
 \[ \ol H_t(z,\g) = \inf_{\Sigma\in{\bf\Sigma}_t}\Big\{\frac12{\rm Tr}\big[\Sigma\g\big]+\ol G_t(z,\Sigma)\Big\},  
 ~~\ul H_t(z,\g) = \sup_{a_1\in A_1}  \inf_{\Sigma\in{\bf\Sigma}_t}\Big\{\frac12{\rm Tr}\big[\Sigma\g\big]+\ul g_t(z,\Sigma,a_1)\Big\},\q\mbox{where}
\]
 \begin{equation}\label{eq:G} 
  \Sigma_t(a_0):=\big(\sigma_t\sigma_t^\top\big)(a_0),\; {\bf \Sigma}_t:=\big\{\Sigma_t(a_0),\; a_0\in A_0\big\},
 ~~A_0(t, \Sigma):=\big\{a_0\in A_0: \big(\sigma_t\sigma_t^\top\big)(a_0)=\Sigma\big\},
 \end{equation}
\[
  \ol G_t(z,\Sigma):=\inf_{a_0\in A_0(t,\Sigma)}\sup_{a_1\in A_1} F_t(z, a_0, a_1),
  ~~ \ul g_t(z,\Sigma,a_1):=\inf_{a_0\in A_0(t,\Sigma)}F_t(z, a_0, a_1),\; \ul G_t(z,\Sigma):=\sup_{a_1\in A_1}\ul g_t(z,\Sigma,a_1).
  \]
Moreover, it is clear that $\cP_0(\a) = \cP^1_0(\a^0) =: \cP_0(\a^0)$ depends only on $\a^0$. Denote $\cP_0 := \cup_{\a^0\in \ol\cA^0} \cP_0(\a^0)$. 
We now introduce the following 2BSDEs with solutions defined similarly to Definition \ref{def:2BSDE}.
 \begin{align}\label{eq:2bsde2}
 \ul Y_t
& =
 \xi+\int_t^T\ul G_s\big ( \ul Z_s,\widehat{\sigma}^2_s\big)\mathrm{d}s
 -\int_t^T\ul Z_s\cdot \mathrm{d}X_s-\int_t^T\mathrm{d} \ul K_s,
 ~\P-\mbox{a.s., for all}~\P\in\Pc_0,\\
 \label{eq:2bsde3}
 \ol Y_t
& =
 \xi+\int_t^T\ol G_s\big(\ol Z_s,\widehat{\sigma}^2_s\big)\mathrm{d}s
 -\int_t^T\ol Z_s\cdot \mathrm{d}X_s-\int_t^T\mathrm{d} \ol K_s,
 ~\P-\mbox{a.s., for all}~\P\in\Pc_0.
 \end{align}

\begin{theorem}
\label{thm:2bsde2}
Under Assumptions \ref{assum-coef} and \ref{assump:2bsde}, the {\rm 2BSDEs} \eqref{eq:2bsde2} and \eqref{eq:2bsde3} have unique solutions with
 \begin{eqnarray*}
 \ol V_0=\inf_{\P\in\Pc_0}\E^{\P}\big[\ol Y_0\big],
 &\mbox{and}&
 \ul V_0=\inf_{\P\in\Pc_0}\E^{\P}\big[\ul Y_0\big].
 \end{eqnarray*}
If moreover the Isaac's--like condition $\ol G=\ul G$ holds, then the game has a value.
\end{theorem}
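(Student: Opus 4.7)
The strategy has three parts: (i) invoke the 2BSDE wellposedness theorem of Possama\"i--Tan--Zhou to establish existence and uniqueness of $(\ol Y,\ol Z,\ol K^\P)$ and $(\ul Y,\ul Z,\ul K^\P)$; (ii) identify $\ol V_0$ with $\ol Y_0$ and $\ul V_0$ with $\ul Y_0$, viewed as $\Pc_0-$q.s.\ constants (so that the $\inf_\P\E^\P[\cdot]$ in the statement amounts to this identification); and (iii) deduce that the game has a value under $\ol G=\ul G$ via uniqueness of the common 2BSDE. Wellposedness is immediate from the affine structure $F_s(z,a)=b_s(a)\cdot z+f_s(a)$ together with Assumption \ref{assum-coef}: the generators $\ol G,\ul G$ are uniformly Lipschitz in $z$, uniformly bounded, $\F$-progressively measurable, and $\Pc_0$ is a saturated family of martingale measures as required in \cite{possamai2015stochastic}.

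For the identification $\ol V_0=\ol Y_0$, the lower bound $\ol V_0\ge\ol Y_0$ is obtained by fixing $\a^0\in\Ac^0_0$ and $\P\in\Pc_0(\a^0)$; since $\widehat\sigma^2_s=\Sigma_s(\a^0_s)$, $\P-$a.s., we have $\a^0_s\in A_0(s,\widehat\sigma^2_s)$. By measurable selection, pick $\a^{1,\eps}\in\overline\Ac^1$ with $\sup_{a_1}F_s(\ol Z_s,\a^0_s,a_1)\le F_s(\ol Z_s,\a^0_s,\a^{1,\eps}_s)+\eps$. Integrating the 2BSDE dynamics of $\ol Y$ under $\P^\a$ (obtained from $\P$ via Girsanov with drift $b(\a)$) gives
\begin{equation*}
\E^{\P^\a}\Big[\xi+\int_0^T f_s(\a_s)\,ds\Big]=\ol Y_0+\E^{\P^\a}\Big[\int_0^T\big(F_s(\ol Z_s,\a_s)-\ol G_s(\ol Z_s,\widehat\sigma^2_s)\big)\,ds+\ol K^\P_T\Big].
\end{equation*}
The $\eps$-optimality of $\a^{1,\eps}$ combined with $\ol G_s\le\sup_{a_1}F_s(\ol Z_s,\a^0_s,a_1)$ makes the integrand $\ge-\eps$, while $\ol K^\P\ge 0$; hence the right-hand side is $\ge\ol Y_0-\eps T$. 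Taking $\sup$ over $(\a^1,\P\in\Pc_0(\a^0))$, then $\inf$ over $\a^0$, and $\eps\downarrow 0$ yields $\ol V_0\ge\ol Y_0$.

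For the converse $\ol V_0\le\ol Y_0$, select $\P^*_n\in\Pc_0$ with $\ol Y_0-\E^{\P^*_n}\big[\xi+\int_0^T\ol G_s(\ol Z_s,\widehat\sigma^2_s)\,ds\big]\le 1/n$, which by the identity $\ol Y_0=\E^\P[\xi+\int\ol G\,ds]+\E^\P[\ol K^\P_T]$ (itself a consequence of the 2BSDE dynamics under $\P$) forces $\E^{\P^*_n}[\ol K^{\P^*_n}_T]\le 1/n$. Using Assumption \ref{assump:2bsde}(ii) (convexity of $\{(b,\sigma,f)(t,\omega,a'_0,a_1):a'_0\in A_0\}$) together with measurable selection, construct $\a^{0,\eps,n}\in\Ac^0_0$ with $\P^*_n\in\Pc_0(\a^{0,\eps,n})$ and $\sup_{a_1}F_s(\ol Z_s,\a^{0,\eps,n}_s,a_1)\le\ol G_s(\ol Z_s,\widehat\sigma^2_s)+\eps$, $\P^*_n-$a.s. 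The analogous It\^o identity then gives $\E^{(\P^*_n)^\a}[\xi+\int f\,ds]\le\ol Y_0+\eps T+\E^{(\P^*_n)^\a}[\ol K^{\P^*_n}_T]$, and the last term converges to $0$ as $n\to\infty$ by boundedness of the Radon--Nikodym derivative $d(\P^*_n)^\a/d\P^*_n$ combined with $\E^{\P^*_n}[\ol K^{\P^*_n}_T]\le 1/n$. Sending $\eps\downarrow 0$ yields $\ol V_0\le\ol Y_0$. The symmetric argument with $\ul G$ and the convexity in $a_1$ gives $\ul V_0=\ul Y_0$; when $\ol G=\ul G$, uniqueness forces $\ol Y_0=\ul Y_0$ and hence $\ol V_0=\ul V_0$.

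The principal obstacle is the upper bound above: constructing $\a^{0,\eps,n}$ that jointly achieves the pointwise $\eps$-infimum in $\ol G$, remains in $\Ac^0_0$ (so $\Pc_0(\a^{0,\eps,n})\neq\emptyset$), and is compatible with $\P^*_n$ (i.e.\ $\P^*_n\in\Pc_0(\a^{0,\eps,n})$). Assumption~\ref{assump:2bsde}(ii) is essential on all counts: the convexity of the image set of $(b,\sigma,f)$ in $a_0$ guarantees that any measurably-chosen target volatility/drift tuple is realized by a single admissible Player~0 feedback control, bridging the gap between the abstract 2BSDE optimizer and an admissible strategy in $\Ac^0_0$.
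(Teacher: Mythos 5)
Your overall architecture (PTZ wellposedness, then a verification-type identification via It\^o and $\eps$-optimal measurable selections) is a genuinely different route from the paper's, which instead reduces everything to the representation of Theorem \ref{thm:2bsde} (itself resting on Lemma \ref{lem-weak} and the El Karoui--Tan dynamic programming results), introduces the auxiliary family of 2BSDEs $\ul Y^{\a^1}$ with generators $\ul g(\cdot,\cdot,\a^1)$, and concludes by the 2BSDE comparison theorem together with an El Karoui--Peng--Quenez--type argument. Before discussing the main issue, two smaller points. First, with the sign convention $-\int_t^T\mathrm{d}\ol K_s$ in \eqref{eq:2bsde3}, the identity is $\E^{\P}[\ol Y_0]=\E^{\P}\big[\xi+\int_0^T\ol G_s\,\mathrm{d}s\big]-\E^{\P}[\ol K^{\P}_T]$, not $+\E^\P[\ol K_T^\P]$; as written your selection criterion for $\P^*_n$ is vacuously satisfied and does not force $\E^{\P^*_n}[\ol K^{\P^*_n}_T]\le 1/n$ (the correct selection comes directly from the minimality condition at $t=0$). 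Second, Assumption \ref{assump:2bsde}(ii) is not used in the paper to build $\eps$-optimal controls: its role is in Lemma \ref{lem-weak}, to identify the feedback-control formulation with the relaxed weak formulation of El Karoui--Tan, which is what makes the dynamic programming requirements of \cite{possamai2015stochastic} available. Your claim that wellposedness is ``immediate'' skips exactly this step.

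The genuine gap is in your upper bound $\ol V_0\le\inf_{\P}\E^{\P}[\ol Y_0]$ (and in the symmetric direction for $\ul V_0$). You build $\a^{0,\eps,n}$ by measurable selection from $A_0(s,\widehat\sigma^2_s)$, i.e.\ as a feedback rule that reads the realized quadratic variation off the path and responds to it. For such a ``chameleon'' control, $\Pc_0(\a^{0,\eps,n})$ is typically far larger than $\{\P^*_n\}$: any martingale measure whose realized volatility stays in $\mathbf{\Sigma}$ is a weak solution of the corresponding driftless SDE, since the constraint $\sigma\sigma^\top(\a^{0,\eps,n}_s)=\widehat\sigma^2_s$ is satisfied tautologically. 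But $\ol J_0(\a^{0,\eps,n},\a^1)$ is a supremum over \emph{all} of $\Pc_0(\a^{0,\eps,n})$, and for measures $\P'\neq\P^*_n$ in that set the term $\E^{(\P')^{\a}}[\ol K^{\P'}_T]$ (which enters with a favourable sign for the opponent) is not small --- the minimality condition only controls the infimum over $\P$, not each individual measure. So your estimate bounds $J_0(\a^{0,\eps,n},\a^1,\P^*_n)$ but not $\ol J_0(\a^{0,\eps,n},\a^1)$. This is precisely the difficulty the paper's route avoids: the only player for whom an explicit $\eps$-optimal selection is ever performed is the one who does \emph{not} control the volatility (so that $\Pc_0(\a)$ is unaffected), while the volatility player's optimization is absorbed into the representation $\inf_{\P}\E^{\P}[\ul Y_0^{\a^1}]=\inf_{\a^0}\ul J_0(\a)$ of the previous section --- where the infima over $\a^0$ and over $\P\in\Pc_0(\a^0)$ point the same way and the El Karoui--Tan equivalence applies --- and the remaining interchange is handled by the 2BSDE comparison theorem (one inequality) and the EPQ-type selection for $\a^1$ (the other). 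To repair your argument you would need to approximate $\P^*_n$ by measures realized by controls for which $\Pc_0(\a^0)$ consists only of near-optimal weak solutions (e.g.\ piecewise-constant controls), which is essentially re-deriving the machinery you skipped.
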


\begin{remark}
The conditions $\ol G=\ul G$ and $\ul H=\ol H$ are equivalent in the uncontrolled volatility setting of Section \ref{sect:hamadene1995zero}. However they are not comparable in  general. It can be readily checked that when a saddle--point exists, the condition $\ol G=\ul G$ actually implies that $\ul H=\ol H$ when evaluated at this saddle--point, which is exactly what is needed to conclude to the existence of a game value. Unlike the general case of Theorem \ref{thm:2bsde} $(ii)$, we can conclude here the existence of a game value, without having to assume existence of a saddle--point. 
\end{remark}

\section{Dynamic programming principle and viscosity property}
\label{sect-Vt}
\setcounter{equation}{0}
In this section we prove Theorem \ref{thm-gamevalue} under the additional {\color{black} Assumption \ref{assum-bsi}.  
As usual,} the main tool is the dynamic programming principle. We shall focus on the upper value, and the lower value can be analysed similarly. For any $t\in [0, T]$, denote
\begin{equation}
\label{sit}
b^t(s, a) := b((t+s)\wedge T, a),\; \si^t(s, a) := \si((t+s)\wedge T, a), \; (s,a)\in [0, T] \times A.
\end{equation}
We define $\cP(t,\a)$, $\cA_t$,..., in an obvious  way, by replacing $(b,\si)$ with $(b^t, \si^t)$. 
For each $\dbP\in \cP(t,\a)$, define
\begin{equation}\label{Vt}
J_t(\o,\a,\dbP) := \dbE^\dbP\bigg[\xi^{t,\omega}+\int_0^{T-t} \!\!f^{t,\omega}_s(\alpha_s)ds\bigg],
~\ol J_t :=  \sup_{\dbP\in \cP(t, \a)} J_t(.,\dbP),
~\mbox{and}~
\ol V_t(\o)
 :=  \inf_{\alpha^0\in\cA_t^{0}} \sup_{\a^1\in \cA^1_t} \ol J_t(\o, \a^0, \a^1).
\end{equation}
As we will explain in Remark \ref{rem-DPP} below, we are not able to establish the DPP for $\ol V$ directly. To get around of this difficulty, as an  intermediate step we shall modify the upper value slightly. Following the idea of Pham and Zhang \cite{pham2014two}, we restrict $\a^0$ to  a class of appropriately defined simple processes. We note that in \cite{pham2014two} both players are restricted to such simple controls, while here we only need this restriction for the one playing first.

\vspace{0.5em}
Let $\cA^{0,{\rm pc}}_t$ denote the subset of $\cA^0_t$ whose elements take the following form
 \begin{equation}
 \label{pc}
 \a^0_s(\o) 
 \;:=\; 
 \sum_{i=0}^{n-1} \sum_{j=1}^{m_i} a^0_{ij} \1_{E_{ij}}(\o) \1_{[t_i, t_{i+1})}(s), \; 0\le s\le T-t,\; \o\in \O,
 \end{equation}
 where  $0=t_0<\cds<t_n=T-t$, $\{E_{ij}\}_{1\le j\le m_i} \subset \cF_{t_i}$ is a partition of $\O$, and $a^0_{ij}\in A_0$ are constants. Define
 \begin{equation}
 \label{Vpc}
  \ol V^{{\rm pc}}_t(\o)
 \;:=\;  \inf_{\alpha^0\in\cA_t^{0,{\rm pc}}} \sup_{\a^1\in \cA^1_t} \ol J_t(\o, \a^0, \a^1).
 \end{equation}
 It is clear that 
 \begin{equation}
 \label{VpcV}
 \ol V_t 
 \;\le\; 
 \ol V^{{\rm pc}}_t.
\end{equation}
Moreover, the uniform continuity and the boundedness in Assumption \ref{assum-coef}, induce the following regularity immediately. 

\begin{lemma}
\label{lem-Vreg}
Under Assumptions \ref{assum-coef} and  \ref{assum-bsi}, 
$J$ is uniformly continuous in $\o$, uniformly in $(t,\a,\dbP)$. Consequently,  $\ol V$ and $\ol V^{{\rm pc}}$ are uniformly continuous in $\o$, uniformly in $t$.
\end{lemma}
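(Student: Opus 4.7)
The plan is to exploit Assumption \ref{assum-bsi} in order to decouple the $\omega$-dependence of $J_t$ from the $\sup$/$\inf$ operations that define $\ol V_t$ and $\ol V_t^{\rm pc}$. The key observation is that since $b^t$ and $\sigma^t$ are independent of $\omega$, the martingale problem \reff{cPa} associated with the shifted dynamics has a solution set $\cP(t,\alpha)$ that depends only on $\alpha \in \cA_t$ and not on the past trajectory $\omega$; consequently $\cA_t, \cA_t^0, \cA_t^1$ and $\cA_t^{0,\rm pc}$ are all defined without reference to $\omega$. Once this is in hand, the lemma is reduced to a uniform modulus estimate for $J_t(\cdot, \alpha, \dbP)$ propagated through $\sup$/$\inf$.

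First I would use Assumption \ref{assum-coef} to fix a common modulus of continuity $\rho$ with $\rho(0^+)=0$ such that
\begin{equation*}
|\xi(\omega)-\xi(\tilde\omega)|+\sup_{(s,a)}|f_s(\omega,a)-f_s(\tilde\omega,a)| \;\le\; \rho\!\left(\|\omega-\tilde\omega\|_\infty\right),\quad\text{for all }\omega,\tilde\omega\in\O.
\end{equation*}
For any $\omega,\tilde\omega,\omega'\in\O$, the concatenated paths $\omega\otimes_t\omega'$ and $\tilde\omega\otimes_t\omega'$ carry identical post-$t$ increments, so $\|(\omega\otimes_t\omega')-(\tilde\omega\otimes_t\omega')\|_\infty \le \|\omega_{\wedge t}-\tilde\omega_{\wedge t}\|_\infty$. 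Combined with the definitions of $\xi^{t,\omega}$ and $f^{t,\omega}$, this yields the pointwise bound
\begin{equation*}
\big|J_t(\omega,\alpha,\dbP)-J_t(\tilde\omega,\alpha,\dbP)\big| \;\le\; (1+T)\,\rho\!\left(\|\omega_{\wedge t}-\tilde\omega_{\wedge t}\|_\infty\right),
\end{equation*}
holding uniformly in $(t,\alpha,\dbP)$, which is the first statement of the lemma.

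Next, since $\cP(t,\alpha)$, $\cA_t^0$, $\cA_t^{0,\rm pc}$ and $\cA_t^1$ are all $\omega$-independent under Assumption \ref{assum-bsi}, the elementary inequalities $|\sup_x F(\omega,x)-\sup_x F(\tilde\omega,x)| \le \sup_x |F(\omega,x)-F(\tilde\omega,x)|$ and the analogous one for $\inf$ allow me to apply successively $\sup_{\dbP\in\cP(t,\alpha)}$, $\sup_{\alpha^1\in\cA_t^1}$ and $\inf_{\alpha^0\in\cA_t^0}$ (respectively $\inf_{\alpha^0\in\cA_t^{0,\rm pc}}$) while preserving the modulus estimate, giving
\begin{equation*}
|\ol V_t(\omega)-\ol V_t(\tilde\omega)|+|\ol V_t^{\rm pc}(\omega)-\ol V_t^{\rm pc}(\tilde\omega)| \;\le\; 2(1+T)\,\rho\!\left(\|\omega_{\wedge t}-\tilde\omega_{\wedge t}\|_\infty\right),
\end{equation*}
uniformly in $t\in[0,T]$. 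The only point that requires care -- and which is the sole reason Assumption \ref{assum-bsi} is invoked here -- is the $\omega$-independence of $\cP(t,\alpha)$: if $b$ or $\sigma$ depended on $\omega$, then $\cP(t,\alpha)=\cP(t,\alpha;\omega)$ would shift under the concatenation, and the uniform control obtained above would break down. This is exactly the obstacle that motivates the separate treatment in Section \ref{sect-extension}.
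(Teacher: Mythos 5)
Your proposal is correct and follows exactly the argument the paper intends: the paper states the lemma follows "immediately" from the uniform continuity and boundedness in Assumption \ref{assum-coef}, and the remark following the lemma confirms that the whole point of Assumption \ref{assum-bsi} is precisely the $\omega$-independence of $\cP(t,\a)$ (and of the control sets), which lets one fix $(\a,\dbP)$, transfer the modulus of $\xi$ and $f$ through the concatenation estimate, and then pass it through the $\sup$/$\inf$ operations. No gaps; your write-up simply makes explicit what the paper leaves implicit.
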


We emphasise that 
the maps  $b$ and $\si$, and hence the probability measure $\dbP$, do not depend on $\o$. When $b$ or $\si$ depends on $\o$, the class $\cP(t,\a)$ will depend on $\o$, and thus one cannot fix an arbitrary $\dbP$ to discuss the regularly of $J$ in terms of $\o$. Indeed, in this general case, the regularity of $\ol V^{{\rm pc}}$ is the major difficulty in our approach, and we shall investigate it further in Section \ref{sect-extension} below. 

\vspace{0.5em}
The following result is crucial for the dynamic programming principle which will be established next, and is an immediate consequence of \cite[Remark 3.8 and Theorem 3.4]{karoui2013capacities2}. 

\begin{lemma}\label{lem-cP}
{\rm (i)} Let $\a= (\a^0, \a^1)\in \cA^{0,{\rm pc}}_0 \times \cA^1_0$, $t\in [0, T]$, and $\dbP\in \cP(\a)$.  Then $\a^{t,\o}\in  \cA^{0,{\rm pc}}_t \times \cA^1_t$ and the regular conditional probability distribution $\dbP^{t,\o} \in \cP(t, \a^{t,\o})$ for $\dbP$-a.e. $\o\in \O$. 
\\
{\rm (ii)} Let $t\in [0, T]$, $n\in\dbN$, $\a^0\in \cA^{0,{\rm pc}}_0$, $\a^0_i \in \cA^{0,{\rm pc}}_t$, $1\le i\le n,$ $\{E_i\}_{1\le i\le n}\subset \cF_t$ a finite partition of $\O$, and define the control
$\widetilde \a^0(\o\otimes_t \o') := \1_{[0, t)}\a^0(\o)  +\1_{[t, T]} \sum_{i= 1}^n \a^0_i(\o')\1_{E_i}(\o).$
 Then $\widetilde\a^0\in \cA^{0,{\rm pc}}_0$.
 \end{lemma}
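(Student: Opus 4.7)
The plan is to reduce both parts to the standard regular-conditional-probability theory for martingale problems, combined with elementary bookkeeping on the piecewise-constant structure of controls in $\cA^{0,{\rm pc}}$.

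For part (i), I would begin with the structural claim $\a^{t,\o}\in\cA^{0,{\rm pc}}_t\times\cA^1_t$. Writing $\a^0$ in the form \eqref{pc} with breakpoints $0=t_0<\cdots<t_n=T$ and $\{E_{ij}\}_j\subset\cF_{t_i}$, and taking $k$ with $t\in[t_k,t_{k+1})$, the shifted process $(\a^0)^{t,\o}$ is constant equal to $\sum_j a^0_{kj}\1_{E_{kj}}(\o)$ on $[0,t_{k+1}-t)$, and on each later interval $[t_i-t,t_{i+1}-t)$ takes values $a^0_{ij}$ on the cells $\{\o'\in\O:\o\otimes_t\o'\in E_{ij}\}$, which are $\cF_{t_i-t}$-measurable because $E_{ij}\in\cF_{t_i}$. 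Hence $(\a^0)^{t,\o}\in\cA^{0,{\rm pc}}_t$ for every $\o$, while $(\a^1)^{t,\o}\in\cA^1_t$ is the usual stability of $\dbF$-progressive measurability under the canonical time-shift.

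The assertion $\dbP^{t,\o}\in\cP(t,\a^{t,\o})$ for $\dbP$-a.e.\ $\o$ is then the content of \cite[Theorem 3.4 and Remark 3.8]{karoui2013capacities2} applied to the martingale problem \eqref{cPa}. Concretely, disintegrating $\dbP$ via its r.c.p.d.\ at time $t$, one checks that on the shifted canonical space the process $M^{\a^{t,\o}}$ is a $\dbP^{t,\o}$-martingale with quadratic variation $\int_0^\cdot(\si^t)^2_s(\a^{t,\o}_s)\,\mathrm{d}s$; this follows from the martingale property of $M^\a$ under $\dbP$ together with the explicit form of the shifted coefficients defined in \eqref{sit}. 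Assumption \ref{assum-bsi} plays no essential role beyond removing the $\o$-dependent ambiguity of $(b^t,\si^t)$ which will cause genuine trouble in Section \ref{sect-extension}.

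For part (ii), the plan is simply to exhibit one breakpoint sequence and one measurable partition certifying $\widetilde\a^0\in\cA^{0,{\rm pc}}_0$. Let $0=s_0<\cdots<s_p=t$ be the breakpoints of $\a^0$ on $[0,t]$ (after appending $t$ if necessary) and $0=r^i_0<\cdots<r^i_{q_i}=T-t$ those of each $\a^0_i$; the common refinement of $\{s_j\}_j$ and $\{t+r^i_j:i,j\}$ yields a partition of $[0,T]$. On the pre-$t$ subintervals $\widetilde\a^0$ inherits its form from $\a^0$, and on each post-$t$ subinterval $[t+r^i_j,t+r^i_{j+1})$ it takes the values of $\a^0_i$ on the cells of its own $\cF_{r^i_j}$-measurable partition, intersected with the initial-segment pullback of $E_i$. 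Since $E_i\in\cF_t\subset\cF_{t+r^i_j}$, this yields a valid $\cF_{t+r^i_j}$-measurable partition of $\O$, completing the check. The main obstacle I anticipate is the careful selection in part (i) of a single $\dbP$-null set, uniform in both components of $\a$, outside of which $\dbP^{t,\o}\in\cP(t,\a^{t,\o})$ holds; this is precisely what \cite[Remark 3.8]{karoui2013capacities2} provides, so it suffices to invoke that reference directly.
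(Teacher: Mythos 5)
Your proposal is correct and follows essentially the same route as the paper, which simply records the lemma as an immediate consequence of \cite[Remark 3.8 and Theorem 3.4]{karoui2013capacities2} applied to the martingale problem \eqref{cPa}. Your additional bookkeeping --- the shift of the piecewise--constant structure in part (i) and the common refinement of breakpoints together with the $\cF_{t+r^i_j}$--measurability of the intersected cells in part (ii) --- is exactly the routine verification the paper leaves implicit.
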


We now prove the dynamic programming principle (DPP) for $\ol V^{{\rm pc}}$.
\begin{theorem}
\label{thm-DPP}
Under Assumptions \ref{assum-coef} and \ref{assum-bsi}, 
 $\ol V^{{\rm pc}}\in{\rm UC}_b(\Theta)$, and 
\begin{equation*}
\ol V^{{\rm pc}}_t(\o) = \inf_{\a^0\in \cA^{0,{\rm pc}}_t} \sup_{\a^1\in \cA^1_t}  \sup_{\dbP\in \cP(t, \a)} \dbE^\dbP\bigg[\big(\ol V^{{\rm pc}}\big)^\th_\t+\int_0^\t f^\th_s(\alpha_s)\mathrm{d}s\bigg],
~\mbox{for any}~\th \in \Th,~\mbox{and $\dbF-$stopping time}~\t \le T-t.
\end{equation*}
\end{theorem}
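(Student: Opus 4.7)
The plan is to prove the dynamic programming identity via the usual two one--sided inequalities, using Lemma \ref{lem-cP}(i) and the tower property for the super--optimality direction ($\ge$), and Lemma \ref{lem-cP}(ii) together with a measurable selection for the sub--optimality direction ($\le$). The regularity statement $\ol V^{{\rm pc}}\in{\rm UC}_b(\Theta)$ follows from combining the uniform continuity in $\omega$ already asserted in Lemma \ref{lem-Vreg} with a time--regularity estimate derived from the DPP itself. I first argue for deterministic $\tau$, and then pass to general $\dbF-$stopping times by an approximation along stopping times taking finitely many values (so that the concatenations in Lemma \ref{lem-cP}(ii) stay in $\cA^{0,{\rm pc}}$).

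\textbf{Super--optimality ($\ge$).} Fix $\theta=(t,\omega)$ and $\a^0\in\cA^{0,{\rm pc}}_t$. For an arbitrary pair $(\a^1,\P)\in\cA^1_t\times\cP(t,\a)$ appearing on the right--hand side, I shall exhibit $(\tilde\a^1,\tilde\P)$ on the left--hand side producing essentially the same cost. By Lemma \ref{lem-Vreg}, the map $\omega'\mapsto \ol V^{{\rm pc}}_{t+\tau(\omega')}(\omega\otimes_t \omega')$ is uniformly continuous, so for each $\eps>0$ I partition $\Omega$ into countably many $\Fc_\tau-$measurable pieces $\{E_k\}$ on which this map oscillates by at most $\eps$. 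On each $E_k$, by definition of $\ol V^{{\rm pc}}_{t+\tau}$ and Lemma \ref{lem-cP}(i), since $\a^{0,\tau,\omega'}\in\cA^{0,{\rm pc}}_{t+\tau(\omega')}$, I may select a single pair $(\a^{1,k},\P^k)$ achieving
\[
J_{t+\tau}\big(\omega\otimes_t\omega',\a^{0,\tau,\omega'},\a^{1,k},\P^k\big)\;\ge\; \ol V^{{\rm pc}}_{t+\tau(\omega')}(\omega\otimes_t \omega')-2\eps,\;\text{for all }\omega'\in E_k.
\]
Concatenating $\a^1$ with $\a^{1,k}$ after $\tau$ on $E_k$ produces $\tilde\a^1\in\cA^1_t$, and pasting $\P$ with the family $\P^k$ via regular conditional probabilities produces $\tilde\P\in\cP(t,\a^0,\tilde\a^1)$. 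The tower property together with Lemma \ref{lem-cP}(i) then yields
\[
J_t(\omega,\a^0,\tilde\a^1,\tilde\P)\;\ge\; \dbE^{\P}\bigg[\int_0^\tau f^\theta_s(\a^0_s,\a^1_s)\,\mathrm{d}s+\ol V^{{\rm pc},\theta}_\tau\bigg]-2\eps.
\]
Taking $\sup$ over $(\a^1,\P)$, then $\inf$ over $\a^0\in\cA^{0,{\rm pc}}_t$, and sending $\eps\to 0$ gives the $\ge$ inequality.

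\textbf{Sub--optimality ($\le$).} Fix $\a^0\in\cA^{0,{\rm pc}}_t$ and $\eps>0$. Again using Lemma \ref{lem-Vreg}, partition $\Omega$ into countably many $\Fc_\tau-$measurable pieces $\{F_k\}$ on which $\omega'\mapsto\ol V^{{\rm pc}}_{t+\tau}(\omega\otimes_t \omega')$ oscillates by at most $\eps$, and on each piece select, by the very definition of $\ol V^{{\rm pc}}_{t+\tau}$, a single control $\a^{0,k}\in\cA^{0,{\rm pc}}_{t+\tau(\omega')}$ satisfying
\[
\sup_{\a^{1\prime}\in\cA^1_{t+\tau(\omega')}}\ol J_{t+\tau(\omega')}\big(\omega\otimes_t \omega',\a^{0,k},\a^{1\prime}\big)\;\le\; \ol V^{{\rm pc}}_{t+\tau(\omega')}(\omega\otimes_t \omega')+2\eps,\;\text{for all }\omega'\in F_k.
\]
Define $\tilde\a^0$ to equal $\a^0$ on $[0,\tau)$ and $\a^{0,k}$ on $[\tau,T-t)$ for $\omega'\in F_k$. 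Approximating $\tau$ by stopping times with finitely many values and then using Lemma \ref{lem-cP}(ii), $\tilde\a^0\in\cA^{0,{\rm pc}}_t$. For arbitrary $(\a^1,\P)$, the tower property and Lemma \ref{lem-cP}(i) give, since $\tilde\a^0=\a^0$ on $[0,\tau)$,
\[
J_t(\omega,\tilde\a^0,\a^1,\P)\;\le\; \dbE^{\P}\bigg[\int_0^\tau f^\theta_s(\a^0_s,\a^1_s)\,\mathrm{d}s+\ol V^{{\rm pc},\theta}_\tau\bigg]+2\eps.
\]
Taking $\sup$ over $(\a^1,\P)$, bounding $\ol V^{{\rm pc}}_t(\omega)\le\sup_{(\a^1,\P)}J_t(\omega,\tilde\a^0,\a^1,\P)$, then $\inf$ over $\a^0\in\cA^{0,{\rm pc}}_t$, and sending $\eps\to 0$ delivers the $\le$ inequality.

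\textbf{Main obstacle and continuity in $t$.} The principal technical challenge is the measurable selection of near--optimal piecewise constant controls: the sets $\cA^{0,{\rm pc}}_t$ carry no useful topology making $\a^0\mapsto \sup_{\a^1}\ol J$ continuous, so the selection proceeds by partitioning $\Omega$ into pieces of small oscillation for $\ol V^{{\rm pc}}_{t+\tau}$, which uses crucially the uniform continuity in $\omega$ from Lemma \ref{lem-Vreg}. An additional delicate point is the approximation by stopping times with finitely many values required to invoke Lemma \ref{lem-cP}(ii); continuity of the integrand in the stopping--time variable (from boundedness of $f$ and $\omega-$continuity of $\ol V^{{\rm pc}}$) guarantees that the limit recovers the full statement. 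Finally, once the DPP is in force, uniform continuity of $\ol V^{{\rm pc}}$ in $t$ is obtained by choosing $\tau=h\wedge(T-t)$ and exploiting $\|\int_0^\tau f^\theta_s\,\mathrm{d}s\|_\infty\le \|f\|_\infty h$ together with the uniform $\omega-$continuity from Lemma \ref{lem-Vreg}, yielding $\ol V^{{\rm pc}}\in {\rm UC}_b(\Theta)$.
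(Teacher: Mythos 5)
Your ``$\le$'' direction is essentially the paper's Step 1.1 (partition of $\O$ via the uniform $\o-$continuity of $J$ from Lemma \ref{lem-Vreg}, selection of near--optimal $\a^{0,k}\in\cA^{0,{\rm pc}}$ for the minimiser, concatenation via Lemma \ref{lem-cP}(ii)), and it is sound modulo the usual details. The gap is in your ``$\ge$'' direction, and it is precisely the difficulty the paper singles out in Remark \ref{rem-DPP}(i). You claim that on each cell $E_k$ of a countable $\cF_\tau-$measurable partition you can select a \emph{single} pair $(\a^{1,k},\dbP^k)$ with $J_{t+\tau}(\o\otimes_t\o',\a^{0,\tau,\o'},\a^{1,k},\dbP^k)\ge\ol V^{{\rm pc}}_{t+\tau}(\o\otimes_t\o')-2\eps$ for all $\o'\in E_k$. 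But the shifted control $\a^{0,\tau,\o'}$ is a genuinely $\o'-$dependent object: even for $\a^0$ of the form \eqref{pc}, the cells $E_{i'j}\in\cF_{t_{i'}}$ with $t_{i'}>t+\tau$ make $\a^{0,\tau,\o'}$ depend on $\o'_{\cdot\wedge\tau}$ through uncountably many equivalence classes (for instance $E_{i'j}=\{\o:\o_{t_{i'}}\in B\}$ yields a different shifted control for each value of $\o'_\tau$). Consequently no countable partition makes $\a^{0,\tau,\cdot}$ constant on its cells, a measure $\dbP^k$ chosen at one reference path fails to belong to $\cP(t+\tau,\a^{0,\tau,\o'},\a^{1,k})$ for the other $\o'\in E_k$, and the pasted measure $\tilde\dbP$ need not lie in $\cP(t,\a^0,\tilde\a^1)$. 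Repairing this would require a measurable selection $\o'\mapsto(\a^{1,\o'},\dbP^{\o'})$ for the continuation problem with the fixed feedback control $\a^0$, whose value has no regularity in $\o'$ that would support such a selection --- this is exactly the ``serious obstacle'' the paper describes.

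The paper avoids this entirely: its Step 1.2 never shifts the whole control $\a^0$. It proves the ``$\ge$'' inequality by induction over the partition points $t_0<\cdots<t_n$ defining $\a^0$, showing $\sup_{\a^1,\dbP}\dbE^\dbP[\ol V^{{\rm pc}}_{t_i}]\le\sup_{\a^1,\dbP}\dbE^\dbP[\ol V^{{\rm pc}}_{t_{i+1}}]$ one interval at a time; on $[t_i,t_{i+1})$ the only part of $\a^0$ that enters is the constant $a^0_{ij}$ on the refined cell $E_{ijk}$, and the key bound $\ol V^{{\rm pc}}_{t_i}(\o^{ijk})\le\sup_{\widetilde\a^1}\sup_{\widetilde\dbP\in\cP(t_i,a^0_{ij},\widetilde\a^1)}\dbE^{\widetilde\dbP}\big[(\ol V^{{\rm pc}}_{t_{i+1}})^{t_i,\o^{ijk}}\big]$ is obtained by invoking the already--established ``$\le$'' inequality with the constant control $a^0_{ij}$. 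This bootstrapping of Step 1.1, together with the reduction to finite partitions via \cite[Lemma 4.3]{pham2014two} and the pasting of measures for the maximising player, is the missing idea; your time--regularity argument and the passage to stopping times are otherwise in line with the paper's Steps 2 and 3.
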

\proof We proceed in three steps.

\vspace{0.5em}
{\bf Step 1.} We first establish the DPP for deterministic $\t$. Without loss of generality we will just prove the following 
 \begin{equation}
\label{DPP0} 
\ol V^{{\rm pc}}_0 = \inf_{\a^0\in \cA^{0,{\rm pc}}_0} \sup_{\a^1\in \cA^1_0}  \sup_{\dbP\in \cP(\a)} \widetilde J_0(\a, \dbP),
~\mbox{where}~  \widetilde J_0(\a, \dbP) := \dbE^\dbP\bigg[ \ol V^{{\rm pc}}_{t}+\int_0^{t} f_s(\alpha_s)\mathrm{d}s\bigg]
~\mbox{for}~0<t\le T.
\end{equation}

\hspace{2em}{\it Step 1.1.} We first prove $\le$. By Lemma \ref{lem-Vreg}, $J$ is uniformly continuous in $\o$.  Since $\O$ is separable, for any $\eps>0$, there exist a partition $\{E_i\}_{i\ge 1} \subset \cF_{t}$ and $\o^i\in E_i$ such that $|J_{t}(\o, \a, \dbP) - J_{t}(\o^i,\a,\dbP)| \le \eps$ for all $\o\in E_i$, $i\ge 1$, and all $\a\in \cA$ and $\dbP\in \cP(t, \a)$. For each $i$, let $\a^0_i\in \cA^{0,{\rm pc}}_{t}$ be an $\eps-$optimiser for $\ol V^{{\rm pc}}_{t}(\o^i)$, namely 
\begin{equation}
\label{S}
S_{t}(\o^i,\a^0_i) \le \ol V^{{\rm pc}}_{t}(\o^i) + \eps,\;\mbox{where} \;  S_t(\o, \a^0_i):= \sup_{\a^1 \in \cA^1_{t}} \sup_{\dbP\in \cP(t, \a^0_i, \a^1)} J_t(\o, \a^0_i, \a^1, \dbP).
\end{equation}
Notice that $\a^0_i$ does not depend on $\o$. Then it is clear that
\begin{equation}
\label{Sreg}
|S_t(\o, \a^0_i)- S_t(\o^i, \a^0_i)|\le \eps, \;\mbox{and}\; |\ol V^{{\rm pc}}_{t}(\o) - \ol V^{{\rm pc}}_{t}(\o^i)|\le \eps,\;\mbox{for all}~ \o\in E_i,\; i\ge 1.
\end{equation}
 Now for any $\a=(\a^0,\a^1) \in \cA^{0,{\rm pc}}_0\times \cA^1_0$ and $\dbP\in \cP(\a)$, define $\widetilde \a^0\in \cA^0$ as in Lemma \ref{lem-cP} (ii). Note that $(\widetilde \a^0)^{t, \o} = \a^0_i$ for $\o\in E_i$. By Lemma \ref{lem-cP} (i), we obtain
\begin{align*}
\ol J_0(\widetilde \a^0, \a^1, \dbP) 
&=  \dbE^\dbP\Bigg[  \dbE^{\dbP^{t,\o}}\bigg[\xi^{t,\o} + \int_0^{T-t} f^{t,\o}_s\big(\big(\widetilde\a^0\big)^{t,\o}_s, \big(\a^1\big)^{t,\o}_s\big)\mathrm{d}s\bigg] + \int_0^{t} f_s(\a_s)\mathrm{d}s\Bigg]\\
&\le \dbE^\dbP\bigg[  \sum_{i\ge 1} S_{t}(\o, \a^0_i) \1_{E_i}(\o) + \int_0^{t} f_s(\a_s)\mathrm{d}s \bigg]\\
&\le  \dbE^\dbP\bigg[ \sum_{i\ge 1} S_{t}(\o^i, \a^0_i) \1_{E_i}(\o) + \int_0^{t} f_s(\a_s)\mathrm{d}s\bigg] +\eps\\
&\le   \dbE^\dbP\bigg[ \sum_{i\ge 1} \ol V^{{\rm pc}}_{t}(\o^i) \1_{E_i}(\o) + \int_0^{t} f_s(\a_s)\mathrm{d}s\bigg] +2\eps\\
&\le   \dbE^\dbP\bigg[  \sum_{i\ge 1}\ol V^{{\rm pc}}_{t}(\o) \1_{E_i}(\o) + \int_0^{t} f_s(\a_s)\mathrm{d}s \bigg] +3\eps
=  \dbE^\dbP\bigg[ \ol V^{{\rm pc}}_{t}(\o)  + \int_0^{t} f_s(\a_s)ds \bigg] +3\eps = \widetilde J_0(\a, \dbP) + 3\eps.
\end{align*}
By the arbitrariness of $\a^0, \a^1, \dbP$ and $\eps$, we prove  the "$\le$" part of  \eqref{DPP0}.

\vspace{0.5em}
\hspace{2em}{\it Step 1.2.} We next prove $\ge$. Fix $\a^0 = \sum_{i=0}^{n-1} \1_{[t_i, t_{i+1})} \sum_{j=1}^{m_i} a^0_{ij} \1_{E_{ij}} \in \cA^{0,{\rm pc}}_0$ as in \eqref{pc}. By otherwise adding the point $t$ in \eqref{DPP0} into the partition points in the definition of $\a^0$, we may assume without loss of generality that $t=t_i$ for a certain $i$. We claim that 
\begin{equation}
\label{DPP1}
 \sup_{\a^1\in \cA^1_0}  \sup_{\dbP\in \cP(\a^0,\a^1)}\dbE^\dbP\big[\ol V^{{\rm pc}}_{t_i}\big] 
 \;\le\;  
 \sup_{\a^1\in \cA^1_0}  \sup_{\dbP\in \cP(\a^0,\a^1)}\dbE^\dbP\big[\ol V^{{\rm pc}}_{t_{i+1}}\big],\;
 \mbox{for all} \;
 i=0,\cds, n-1,
 \end{equation}
so that, since $\ol V^{{\rm pc}}_{t_n} = \xi$
 \begin{equation*}
  \sup_{\a^1\in \cA^1_0}  \sup_{\dbP\in \cP(\a^0,\a^1)}\dbE^\dbP[\ol V^{{\rm pc}}_{t_i}] 
  \le
  \sup_{\a^1\in \cA^1_0}  \sup_{\dbP\in \cP(\a^0,\a^1)}\dbE^\dbP[\xi],\;
  \mbox{for all}\;
  i=0,\cds, n-1,
\end{equation*}
thus implying the "$\ge$" part of \eqref{DPP0} by the arbitrariness of $\a^0$.

\vspace{0.5em}
To see that \eqref{DPP1} holds, fix $i$,  $\a^1\in \cA^1_0$, and  $\dbP\in \cP(\a^0,\a^1)$. For any $\eps>0$, by Pham and Zhang \cite[Lemma 4.3]{pham2014two}, one may choose the partition $\{E_k\}_{k \ge 1}\subset \cF_{t_i}$ such that 
\[\sup_{\a\in \cA} \sup_{\dbP\in \cP(\a)} \dbP\bigg[\bigcup_{k> N}E_k\bigg] \le \eps,\; \text{for some $N$ large enough}.\]
Denote $\widetilde E_N := \cup_{k> N}E_k$, $E_{ijk} := E_{ij} \cap E_k$ for $j=1,\cds, m_i$, $k=1,\cds, N$, and $\widetilde E_{ij} := E_{ij} \cap \widetilde E_N$, for $j=1,\cds, m_i$.  Fix an $\o^{ijk}\in E_{ijk}$ for each $j, k$, whenever $E_{ijk} \neq \emptyset$. Recalling that $\{E_{ij}\}_{1\le j\le m_i}$ is a partition of $\O$, we have
\begin{align*}
\dbE^\dbP\big[\ol V^{{\rm pc}}_{t_i}\big] 
=
\dbE^\dbP\Bigg[\sum_{j=1}^{m_i} \ol V^{{\rm pc}}_{t_i}(\o) \1_{E_{ij}}(\o)\Bigg] &\le \dbE^\dbP\Bigg[\sum_{j=1}^{m_i}\sum_{k=1}^N \ol V^{{\rm pc}}_{t_i}(\o) \1_{E_{ijk}}(\o)\Bigg] + C\eps
\\
&\le
 \dbE^\dbP\Bigg[\sum_{j=1}^{m_i}\sum_{k=1}^N \ol V^{{\rm pc}}_{t_i}(\o^{ijk}) \1_{E_{ijk}}(\o)\Bigg] +C\eps
\\
&\le
 \dbE^\dbP\Bigg[\sum_{j=1}^{m_i} \sum_{k=1}^N\sup_{\widetilde \a^1 \in \cA^1_{t_i} }\sup_{\widetilde \dbP \in \cP(t_i, a^0_{ij}, \widetilde \a^1)} \dbE^{\widetilde \dbP}\big[(\ol V^{{\rm pc}}_{t_{i+1}})^{t_i, \o^{ijk}}\big]   \1_{E_{ijk}}(\o)\Bigg] +C\eps, 
\end{align*}
where the first inequality is due to the boundedness of $\ol V^{\rm pc}$ and  the last inequality used {\it Step 1.1}. For each $j, k$, there exist  $\widetilde \a^1_{jk} \in \cA^1_{t_i}$ and $\widetilde \dbP_{jk} \in \cP(t_i, a^0_{ij}, \widetilde \a^1_{jk})$ such that
 \begin{equation*}
  \sup_{\widetilde \a^1 \in \cA^1_{t_i} }
  \sup_{\widetilde \dbP \in \cP(t_i, a^0_{ij}, \widetilde \a^1)} \dbE^{\widetilde \dbP}\Big[(\ol V^{{\rm pc}}_{t_{i+1}})^{t_i, \o^{ijk}}\Big]   
  \le
  \dbE^{\widetilde \dbP_j}\Big[(\ol V^{{\rm pc}}_{t_{i+1}})^{t_i, \o^{ijk}}\Big] + \eps.
  \end{equation*}
Fix an arbitrary $a_1\in A_1$ and $\widetilde \dbP^{a^0_{ij}, a_1} \in  \cP(t_i, a^0_{ij},a_1)$. Define 
\begin{align*}
\widetilde \a^1 &:= \1_{[0, t_i) } \a^1+ \1_{[t_i, T]} \Bigg(\sum_{j=1}^{m_i} \sum_{k=1}^N \widetilde \a^1_{jk} \1_{E_{ijk}} + a_1 \1_{\widetilde E_N}\Bigg),\; \widetilde \dbP := \dbP\otimes_{t_i} \sum_{j=1}^{m_i} \Bigg(\sum_{k=1}^N \widetilde \dbP_{jk} \1_{E_{ijk}} + \widetilde \dbP^{a^0_{ij}, a_1}\1_{\widetilde E_{ij}}\Bigg) \in \cP(\a^0, \hat \a^1).
\end{align*}
 Then
 \begin{align*}
 \dbE^\dbP\Big[\ol V^{{\rm pc}}_{t_i}\Big] 
& \le 
 \dbE^\dbP\Bigg[\sum_{j=1}^{m_i}\sum_{k=1}^N  \dbE^{\widetilde \dbP_{jk}}\Big[(\ol V^{{\rm pc}}_{t_{i+1}})^{t_i, \o^{ijk}}\Big] \1_{E_{ijk}}(\o)
                 \Bigg] +C\eps \\
                 &\le
  \dbE^\dbP\Bigg[\sum_{j=1}^{m_i} \sum_{k=1}^N  \dbE^{\widetilde \dbP_{jk}}\Big[(\ol V^{{\rm pc}}_{t_{i+1}})^{t_i, \o}\Big] \1_{E_{ijk}}(\o)
                  \Bigg] 
 +C\eps
 =
 \dbE^\dbP\Bigg[\sum_{j=1}^{m_i}  \sum_{k=1}^N \dbE^{ \widetilde \dbP^{t_i, \o} }\Big[(\ol V^{{\rm pc}}_{t_{i+1}})^{t_i, \o}\Big] \1_{E_{ijk}}(\o)
                 \Bigg] 
 +C\eps
 \\
 &=
 \dbE^\dbP\Bigg[\dbE^{ \widetilde \dbP^{t_i, \o} }\Big[(\ol V^{{\rm pc}}_{t_{i+1}})^{t_i, \o}\Big]  \1_{\widetilde E_N^c}\Bigg] +C\eps 
 \le  \dbE^\dbP\Bigg[\dbE^{ \widetilde \dbP^{t_i, \o} }\Big[(\ol V^{{\rm pc}}_{t_{i+1}})^{t_i, \o}\Big]  \Bigg] +C\eps  =
 \dbE^{\widetilde \dbP}\Big[\ol V^{{\rm pc}}_{t_{i+1}}  \Big] 
 +C\eps.
\end{align*}
This leads to \eqref{DPP1} immediately.

\vspace{0.5em}
{\bf Step 2.}  We next show that $\ol V^{{\rm pc}}$ is uniformly continuous in $(t,\o)$.  Let $\rho$ denote the modulus of continuity function of $\ol V^{{\rm pc}}$ with respect to $\o$. For $t < t'$ and $\o\in \O$, denote $\delta := d_\infty((t,\o), (t',\o))$. By {\bf Step 1}, we have
\begin{align*}
\big|\ol V^{{\rm pc}}_{t} - \ol V^{{\rm pc}}_{t'} \big|(\o) &\le \sup_{\a^0\in \cA^{0,{\rm pc}}_t} \sup_{\a^1\in \cA^1_t}  \sup_{\dbP\in \cP(t, \a^0,\a^1)} \bigg| \dbE^{\dbP}\bigg[ \ol V^{{\rm pc}}_{t'}(\o\otimes_t X)+\int_0^{t'-t} f^{t,\o}_s(\a^0_s, \a^1_s) \mathrm{d}s\bigg] - \ol V^{{\rm pc}}_{t'}(\o)\bigg| \\
 &\le \sup_{\a^0\in \cA^{0,{\rm pc}}_t} \sup_{\a^1\in \cA^1_t}  \sup_{\dbP\in \cP(t, \a^0,\a^1)} \dbE^{\dbP}\bigg[ \rho\big(\|(\o\otimes_t X)_{\cd\wedge t'} - \o_{\cd\wedge t'}\|_\infty\big)+ \int_0^\delta  |f^{t,\o}_s(\a^0_s, \a^1_s)| \mathrm{d}s \bigg]  \\
  &\le \sup_{\a^0\in \cA^{0,{\rm pc}}_t} \sup_{\a^1\in \cA^1_t}  \sup_{\dbP\in \cP(t, \a^0,\a^1)} \dbE^{\dbP}\bigg[ \rho\big( \delta + \|X_{\cd\wedge\delta}\|_\infty\big)+  \int_0^\delta  |f^{t,\o}_s(\a^0_s, \a^1_s)| \mathrm{d}s \bigg]  \\
                           &\le \sup_{\dbP\in \cP_L} \dbE^\dbP\Big[\rho\big( \delta + \|X_{\cd\wedge\delta}\|_\infty\big)\Big] + C\delta,
\end{align*}
for some $L$ large enough. Under our conditions, this clearly implies the uniform continuity of $\ol V^{{\rm pc}}$ in $(t,\o)$.

\vspace{0.5em}
{\bf Step 3.} Following standard approximation arguments, we may extend Theorem \ref{thm-DPP} to stopping times.
\qed

\begin{remark}
\label{rem-DPP}
 $(i)$ Following similar arguments as in {\it Step 1.1} above, one may prove the partial {\rm DPP} for $\ol V$:
\begin{equation*}
\ol V_0 \le \inf_{\a^0\in \cA^0_0} \sup_{\a^1\in \cA^1_0}  \sup_{\dbP\in \cP(\a^0,\a^1)} \dbE^\dbP\bigg[ \ol V_{t}+\int_0^{t} f_s(\alpha_s)\mathrm{d}s \bigg].
\end{equation*}
However, to prove the opposite inequality, we encountered some serious difficulties that we would like to highlight. Let $f=0$ for simplicity of presentation. Then we want to prove, for fixed $\a^0 \in \cA^0_0$, that
 \begin{equation}\label{DPP2}
 \sup_{\a^1\in \cA^1_0}  \sup_{\dbP\in \cP(\a^0,\a^1)} \dbE^\dbP[ \xi] 
  \ge 
  \sup_{\a^1\in \cA^1_0}  
  \sup_{\dbP\in \cP(\a^0,\a^1)} \dbE^\dbP\Bigg[\sup_{\widetilde \a^1 \in \cA^1_t} \sup_{\widetilde \dbP\in \cP(t, (\a^0)^{t,\o}, \widetilde \a^1)} \dbE^{\widetilde \dbP}[\xi^{t,\o}] \Bigg],
\end{equation}
which is the partial {\rm DPP} for a control problem $($instead of a game problem$)$. However, we insist on the fact that we are using weak formulation, which implies in particular that the control $\a^1$ depends on $X$, and the fixed control $\a^0$ has typically absolutely no regularity with respect to $X$. The {\rm DPP} for this problem is not available in the literature, and there are indeed serious obstacles to overcome in order to establish it. Several authors managed to obtain such {\rm DPP} but either in strong formulation $($see Nisio {\rm\cite{nisio1988stochastic}}, Fleming and Souganidis {\rm \cite{fleming1989existence}}, \'Swi\k{e}ch {\rm\cite{swikech1996another}},  Buckdahn and Li {\rm \cite{buckdahn2008stochastic}}, Bouchard, Moreau and Nutz {\rm\cite{bouchard2014stochastic}}, Bouchard and Nutz {\rm\cite{bouchard2015stochastic}}, Krylov {\rm\cite{krylov2013dynamic,krylov2014dynamic}}$)$, or with the use of simple strategies for both players $($see Pham and Zhang {\rm\cite{pham2014two}} or S\^irbu {\rm\cite{sirbu2014stochastic,sirbu2015asymptotic}}$)$\footnote{A slight exception would be Kovats \cite{kovats2009value}, which considers games written somehow in a weak formulation, but with strategies against control, and relies on approximation techniques similar to \cite{fleming1989existence}.}.

\vspace{0.5em}
$(ii)$ As we saw in Example \ref{eg-strong}, in the strong formulation setting of Subsection \ref{sect:strong}, the game value typically does not exist.  The main reason is that in this setting the DPP fails for the dynamic upper $($and lower$)$ value of the game. In fact, in this case the dynamic version of $\ol V^S_0$ in \eqref{Strong-V0} is the following deterministic function $($assuming $b=f=0$ for simplicity$)$,
\begin{equation*}
\dis\ol u(t, x) := \inf_{\a^0\in \cA^0_S}\sup_{\a^1\in \cA^1_S} \dbE^{\dbP_0}\big[g(X^{t,x, \a}_{T-t}) \big],\; \mbox{where}\; X^{t,x,\a}_s = x +  \int_0^s \si^t(r, X^{t,x,\a}_r, \a_r) \mathrm{d}W_r,
~0\le s\le T-t, ~\dbP_0-\mbox{a.s.}
\end{equation*}
Under mild conditions, one can easily show that $\ol u(t, \cd)$ is uniformly continuous in $x$. Following similar arguments as in Step 1.2 $($actually much easier because we have the desired regularity under the strong formulation$)$, one can prove the following partial {\rm DPP} $($stated for $t_1=0, t_2 = t$ and $x=0$ for simplicity$)$, 
\begin{equation*}
\ol u(0,0) \ge  \inf_{\a^0\in \cA^0_S}\sup_{\a^1\in \cA^1_S} \dbE^{\dbP_0}\big[\ol u(t, X^\a_t) \big].
\end{equation*}
However, the opposite direction of the {\rm DPP} will not hold, since the game value does not exist. Let us explain why the arguments in Step 1.1 fail in this setting. 

\vspace{0.5em}
By the uniform regularity of $\ol u$ in $x$, there exists a partition $\{O_i\}_{i\ge 1}$ of $\dbR^d$ such that $|\ol u(t, x) - \ol u(t, x_i)|\le \eps$ for all $x\in O_i$, where $x_i \in O_i$ is fixed. Now for each $i$, let $\a^0_i \in \cA^0_S$ be an $\eps-$optimiser of $\ol u(t, x_i)$. Then we will have, for any $\a\in \cA_S$
\begin{equation*}
  \dbE^{\dbP_0}\big[\ol u(t, X^\a_t) \big] \ge  \dbE^{\dbP_0}\Bigg[ \sum_{i\ge 1} \sup_{\widetilde \a^1 \in \cA^1_S} \dbE^{\dbP_0} \Big[g(X^{t, x, (\a^0_i, \widetilde \a^1)}_{T-t}\Big]\Big|_{x=X^\a_t} \1_{O_i}(X^\a_t)\Bigg]  - \eps\ge\sup_{\widetilde \a^1 \in \cA^1_S}  \dbE^{\dbP_0} \Big[ g\big(\widetilde X_T\big)\Big] -\eps,
 \end{equation*}
 where, denoting $W^t_s := W_{t+s}-W_t$, 
 \begin{equation}
 \label{tildeX}
 \widetilde X_s = X^\a_t + \int_t^s \si\bigg(r, \widetilde X_r, \sum_{i\ge 1} \a^0_i(r-t, W^t_\cd) \1_{O_i}(X^\a_t), \widetilde \a^1(r-t, W^t)\bigg) \mathrm{d}r, \q\dbP_0-\mbox{a.s.}
 \end{equation}
 In other words, to prove the opposite direction of the {\rm DPP}, essentially we want to prove
 \begin{equation}
 \label{DPpeng2015g2}
  \inf_{\a^0\in \cA^0_S}\sup_{\a^1\in \cA^1_S} \dbE^{\dbP_0}\big[g(X^\a_T) \big] \le  \inf_{\a^0\in \cA^0_S}\sup_{\a^1\in \cA^1_S} \sup_{\widetilde \a^1 \in \cA^1_S}  \dbE^{\dbP_0} \Big[ g\big(\widetilde X_T\big)\Big]  .
 \end{equation}
 Fix an $\a^0\in \cA^0_S$. Note that the right side above involves only $\a^0\big|_{[0, t]}$.  The idea is to construct $\widetilde \a^0 \in \cA^0_S$ such that 
 \begin{equation*}
 \sup_{\a^1\in \cA^1_S} \dbE^{\dbP_0}\big[g(X^{\widetilde \a^0, \a^1}_T) \big] \le \sup_{\a^1\in \cA^1_S} \sup_{\widetilde \a^1 \in \cA^1_S}  \dbE^{\dbP_0} \big[ g(\widetilde X_T)\big].  
 \end{equation*}
 By \eqref{tildeX}, the most natural construction of $\widetilde \a^0$ is to set $\widetilde \a^0(s,W_\cd) := \1_{[0, t)}(s) \a^0(s,W_\cd) + \1_{[t, T]} \sum_{i\ge 1} \a^0_i(s-t, W^t_\cd) \1_{O_i}(X^\a_t)$.
This construction does work well in weak formulation, as we saw in Lemma \ref{lem-cP} $(ii)$. However, in the strong formulation considered here, the above construction relies on $X^\a_t$, thus in turn on $\a^1\big|_{[0, t]}$. In other words, the $\a^0$ in the left side of \eqref{DPpeng2015g2} will depend on $\a^1$. This is exactly the idea of the notion of strategy against control introduced in Subsection \ref{sect:fleming1989existence}, which is however not allowed in the current setting of strong formulation with control against control. 
\end{remark}
  
We now derive the viscosity property of $\ol V^{{\rm pc}}$. Once the DPP and the regularity of $\ol V^{{\rm pc}}$ have been established, it is a rather straightforward verification.

\begin{proposition}
\label{prop-viscosity}
Under Assumptions \ref{assum-coef} and \ref{assum-bsi}, 
$\ol V^{{\rm pc}}$ is a viscosity solution of the {\rm PPDE}
\begin{equation}
\label{PPDEol}
-\pa_t \ol V^{{\rm pc}} - \ol H_t\big(\pa_\o \ol V^{{\rm pc}}, \pa^2_{\o\o} \ol V^{{\rm pc}}\big) 
=
0.
\end{equation} 
\end{proposition}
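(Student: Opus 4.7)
The plan is to establish the viscosity subsolution and supersolution properties separately by combining the dynamic programming principle of Theorem~\ref{thm-DPP} with the jet inequalities, and then extracting the pointwise Hamiltonian via It\^o's formula and the uniform continuity of the coefficients.

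For the subsolution property, I fix $\th=(t,\o)\in\Theta$ and $(\kappa,z,\g)\in\ul\cJ^L\ol V^{{\rm pc}}(\th)$, and argue by contradiction, assuming $\kappa+\ol H_t(\o,z,\g)<0$. Writing $\ol H_t=\inf_{a_0}\sup_{a_1}h_t$, I select $a_0^*\in A_0$ and $c>0$ with $\sup_{a_1}h_t(\o,z,\g,a_0^*,a_1)\le-\kappa-c$. Plugging the constant control $\alpha^0\equiv a_0^*\in\cA^{0,{\rm pc}}_t$ into the DPP with stopping time $\ch_\eps$ and $\delta$-optimising over the remaining $(\alpha^1,\dbP)$, I obtain $(\alpha^{1,\delta},\dbP_\delta)\in\cA^1_t\times\cP(t,a_0^*,\alpha^{1,\delta})$ such that, combined with the sub-jet inequality at $\tau=\ch_\eps$,
\[
-\delta\le\dbE^{\dbP_\delta}\Big[\kappa\ch_\eps+q^{z,\g}(X_{\ch_\eps})+\int_0^{\ch_\eps}f^\th_s(a_0^*,\alpha^{1,\delta}_s)\mathrm{d}s\Big].
\]
It\^o's formula applied to $q^{z,\g}(X_\cdot)$ under $\dbP_\delta$, combined with the uniform continuity of $b,\sigma,f$ in $(t,\o)$ (Assumption~\ref{assum-bsi} is crucial here, as it removes any $\o$-dependence from $b$ and $\sigma$ and hence circumvents any difficulty with the shifted diffusion coefficient) together with $\|X_{\cdot\wedge\ch_\eps}\|_\infty\le\eps$, transforms the right-hand side into $\dbE^{\dbP_\delta}\big[\int_0^{\ch_\eps}(\kappa+h_t(\o,z,\g,a_0^*,\alpha^{1,\delta}_s))\mathrm{d}s\big]$ up to an error of order $(\eps+\rho(\eps))\dbE^{\dbP_\delta}[\ch_\eps]$. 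The choice of $a_0^*$ forces this integrand to be $\le-c$, and a standard hitting-time estimate, uniform over $\dbP\in\cP_L$, gives $\dbE^{\dbP}[\ch_\eps]\ge c'\eps^2$. Choosing $\eps$ small to dominate the error and then letting $\delta\to0$ yields the contradiction.

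For the supersolution property, the argument is symmetric: assuming $\kappa+\ol H_t(\o,z,\g)\ge 2c>0$, for every $a_0\in A_0$ there exists $\tilde a_1(a_0)\in A_1$ with $h_t(\o,z,\g,a_0,\tilde a_1(a_0))\ge-\kappa+c$. A $\delta$-optimiser $\alpha^{0,\delta}\in\cA^{0,{\rm pc}}_t$ in the DPP is piecewise constant with finitely many values $\{a^{0,\delta}_{ij}\}$, so no measurable-selection machinery is needed: I simply define $\alpha^1$ as the piecewise constant process obtained by substituting each $a^{0,\delta}_{ij}$ by $\tilde a_1(a^{0,\delta}_{ij})$, and a concatenation of SDE solutions in the spirit of Lemma~\ref{lem-cP} yields $\cP(t,\alpha^{0,\delta},\alpha^1)\neq\emptyset$, so that $\alpha^1\in\cA^1_t$. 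Picking $\dbP_\delta\in\cP(t,\alpha^{0,\delta},\alpha^1)$ and running the same It\^o plus continuity computation, but now with the super-jet inequality, produces $\delta\ge(c/2)\dbE^{\dbP_\delta}[\ch_\eps]\ge(cc'/2)\eps^2$, again contradicting $\delta\to 0$.

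The main technical obstacle lies in the passage from the path- and $\o$-dependent Hamiltonian $h_{t+s}(\o\otimes_t X,z+\g X_s,\g,a)$ arising in the It\^o expansion back to its value $h_t(\o,z,\g,a)$ at the fixed point $\th$. Assumption~\ref{assum-bsi} disposes of the $\o$-dependence of $b$ and $\sigma$, so the only remaining ingredients are the uniform modulus of continuity of $b,\sigma,f$ in $(t,\o)$ together with $\|X_{\cdot\wedge\ch_\eps}\|_\infty\le\eps$, and a uniform lower bound $\dbE^{\dbP}[\ch_\eps]\gtrsim\eps^2$ over $\dbP\in\cP_L$ to offset the resulting $O(\eps+\rho(\eps))\dbE^{\dbP}[\ch_\eps]$ error against the main $-c\dbE^{\dbP}[\ch_\eps]$ term.
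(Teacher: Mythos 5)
Your proof is correct and follows essentially the same route as the paper's: argue by contradiction, combine the jet inequalities with the DPP of Theorem \ref{thm-DPP} using a constant control $a_0^*$ for the sub--solution part (resp.\ the finite--valued selector $a_0\mapsto\tilde a_1(a_0)$ composed with the piecewise--constant $\delta$--optimiser for the super--solution part, so that no measurable selection is needed and $\cP(t,\a^{0,\delta},\a^1)\neq\emptyset$ by concatenation), then localise the Hamiltonian via It\^o's formula, the uniform continuity of $b,\si,f$ and the bound $\|X_{\cdot\wedge \ch_\eps}\|_\infty\le\eps$. The only cosmetic difference is that you stop at $\ch_\eps$ and close the contradiction with the uniform lower bound $\dbE^{\dbP}[\ch_\eps]\gtrsim\eps^2$ over $\cP_L$, whereas the paper stops at $\t=\ch_\eps\wedge\delta$ and uses $\dbP[\ch_\eps\le\delta]\lesssim_{\eps,L}\delta^2$ before sending $\delta\to0$; both estimates are standard and valid.
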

\proof Without loss of generality, we shall only verify the $L-$viscosity property at $(0,0)$ for some $L$ large enough.  

\vspace{0.5em}
{\it Step 1.} We first verify the viscosity sub--solution property. Assume by contradiction that there exists $(\kappa, z, \g) \in  \ul\cJ^L \ol V^{{\rm pc}}(0,0)$ with corresponding hitting time $\ch_\eps$ such that $-c:= \kappa +\ol H_0(z, \g) < 0$.
By the definition of $\ol H$, there exists $a_0^*\in A_0$ such that $\kappa +\sup_{a_1\in A_1} h_0(z, \g, a^*_0, a_1) \le -{\frac c2} < 0$. By choosing $\eps>0$ small enough, it follows from the uniform regularity of $b, \si$ and $f$ that
\begin{equation}
\label{small1}
 \kappa +\sup_{a_1\in A_1} h_t(\o,  z, \g, a^*_0, a_1) \le -{\frac c3} < 0,\; 0\le t \le \ch_\eps(\o).
\end{equation}
Now fix the above $\eps>0$. For an arbitrary $\delta>0$, denote $\t := \ch_\eps \wedge \delta \le \ch_\eps$. By \eqref{cJ} we have
\begin{eqnarray}
\label{small2}
-\ol V^{{\rm pc}}_0 
&\le& 
\inf_{\dbP\in \cP_L} \dbE^\dbP\bigg[\kappa \t + z\cd X_\t + {\frac12} {\rm Tr}\big[\g X_\t X^\top_\t\big] - \ol V^{{\rm pc}}_\t\bigg].
\end{eqnarray}
On the other hand, by setting $\a^0$ as the constant process $a^*_0$ in the right side of DPP \eqref{DPP0}, we have
\begin{eqnarray*}
 \overline{V}^{{\rm pc}}_0 
 &\le&  
 \sup_{\alpha^1\in \Ac^1_0}  \sup_{\mathbb P\in \Pc(a^*_0,\alpha^1)}  \mathbb E^{\mathbb P}\bigg[ \overline{V}^{{\rm pc}}_\tau +\int_0^\tau f_s(a^*_0, \a^1_s) \mathrm{d}s \bigg].
\end{eqnarray*}
Choose $\a^1\in \cA^1_0$ and $\dbP\in \cP(a^*_0,\a^1)$ such that
\begin{equation}
\label{small3}
 \overline{V}^{{\rm pc}}_0 \leq  \mathbb E^{\mathbb P}\Big[ \overline{V}^{{\rm pc}}_\tau +\int_0^\tau f_s(a^*_0, \a^1_s) \mathrm{d}s \Big]+ \delta^2.
\end{equation}
Note that $\cup_{\a\in \cA} \cP(\a)  \subset \cP_L$ for $L$ large enough, and in particular the above $\dbP$ is also in $\cP_L$. Then, we derive from \eqref{small2} and \eqref{small3} that
\begin{align*}
 -\delta^2 
 \le
 \dbE^{\dbP}\Big[\kappa \t + z\!\cd\! X_\t + {\frac12} {\rm Tr}\big[\g X_\t X^\top_\t\big] 
                            \!+\!\!\int_0^\t\!\!\! f_s(a^*_0, \a^1_s) \mathrm{d}s 
                    \Big]
 =
 \dbE^{\dbP}\bigg[\int_0^\t \!\!\!\big(\kappa + h_s(z,\g, a^*_0, \a^1_s) +  {\rm Tr}\big[\g  b(s, a^*_0,  \a^1_s) X^\top_s\big] \big)\mathrm{d}s \bigg] .
  \end{align*}
 Now by \eqref{small1} we have
 \begin{align*}
 -\delta^2 
 \;\le\;
 \mathbb E^{\mathbb P}\Big[ -{\frac c3}  \tau + \int_0^\tau  {\rm Tr}\big[ \gamma  b(s, a^*_0,  \alpha^1_s) X^\top_s \big]\mathrm{d}s \Big] 
 \;\le\; 
 -{\frac c3} \delta + C\mathbb P\big[\ch_\eps \leq \delta\big] + C \mathbb E^\mathbb P\big[ \|X_{\cdot\wedge\delta}\|_\infty  \big] \delta .
 \end{align*}
Clearly $\dbE^\dbP\big[ \|X_{\wedge\delta}\|_\infty  \big] \le C_L\sqrt{\delta}$. Moroever, for $\delta \le {\frac\eps2}$,
 \begin{equation*}
 \mathbb P\big[\ch_\eps \leq \delta\big]  
 \;\le\; 
 \mathbb P\big[\delta +\|X_{\cdot\wedge\delta}\|_\infty \geq \eps\big]  
 \;\le\;  
 \mathbb P\big[\|X_{\wedge\delta}\|_\infty \geq {\frac\eps2}\big] \leq {\frac C{\eps^4}} \mathbb E^\mathbb P\big[\|X_{\wedge\delta}\|_\infty^4\big] \le C_{\eps,L}\delta^2.
 \end{equation*}
Then $0 \le -{\frac c 3} \delta + CC_{\eps,L} \delta^2+CC_L\delta^{3/2}+\delta^2$, which leads to the desired contradiction for small $\delta>0$. 
\ms

{\it Step 2.} We next verify the viscosity super--solution property. Assume by contradiction that there exists $(\kappa, z, \g) \in  \ol\cJ^L u(0,0)$ with corresponding hitting time $\ch_\eps$ such that $c := \kappa +\ol H_0(z, \g) > 0.$ Then
\begin{equation*}
 \kappa + \sup_{a_1\in \cA_1} h_0(z, \g, a_0, a_1) \ge c
 >
 0,\; 
 \mbox{for all}\; 
 a_0 \in A_0,
 \end{equation*}
and there exists a mapping $\psi: A_0 \longrightarrow A_1$ such that $\kappa + h_0(z, \g, a_0, \psi(a_0)) \ge {\frac c2} > 0,$ for all $a_0 \in A_0$. By choosing $\eps>0$ small enough, it follows from the uniform regularity of $b, \si$ and $f$ that
\begin{equation}
\label{small4}
 \kappa +h_t(\o,  z, \g, a_0, \psi(a_0)) 
 \ge 
 {\frac c3} 
 > 0,
 \mbox{ for all }
 a_0 \in A_0, ~ 0\le t \le \ch_\eps(\o).
\end{equation}
Now fix the above $\eps>0$. For an arbitrary $\delta>0$, denote $\tau := \ch_\eps \wedge \delta \le \ch_\eps$. By \eqref{cJ} we have
\begin{equation}
\label{small5}
-\ol V^{{\rm pc}}_0 
\;\ge\;
\sup_{\dbP\in \cP_L} \dbE^\dbP\bigg[\kappa \t + z\cd X_\t + {\frac12} {\rm Tr}\big[\g X_\t X^\top_\t \big]- \ol V^{{\rm pc}}_\t\bigg].
\end{equation}
On the other hand, by the DPP \eqref{DPP0}, there exists $\a^0\in \cA^0$ such that
 \begin{equation*}
 \overline{V}^{{\rm pc}}_0 
 \geq 
 \sup_{\a^1\in \Ac^1_0}  \sup_{\mathbb P\in \Pc(\a^0,\a^1)} 
 \mathbb E^{\mathbb P}\bigg[ \overline{V}^{{\rm pc}}_\tau +\int_0^\tau f_s(\a^0_s, \a^1_s)\mathrm{d}s   \bigg] -\delta^2.
 \end{equation*}
Since $\cup_{\a^1\in \cA^1_0}  \cP(\a^0,\a^1) \subset \cP_L$,  the above estimate together with \eqref{small5} implies
\begin{eqnarray*}
\delta^2 
&\ge&
\sup_{\a^1\in \cA^1_0}  \sup_{\mathbb P\in \Pc(\a^0,\a^1)} \mathbb E^{\mathbb P}\bigg[ \kappa \tau + z\cd X_\tau + {\frac1 2}{\rm Tr}\big[ \gamma  X_\tau X^\top_\tau\big] +\int_0^\tau f_s(\a^0_s, \a^1_s)\mathrm{d}s   \bigg] 
\\
&=& \sup_{\a^1\in \cA^1_0}  \sup_{\mathbb P\in \Pc(\a^0,\a^1)} \mathbb E^{\mathbb P}\bigg[\int_0^\t \big(\kappa + h_s(z,\gamma, \a^0_s, \a^1_s) + {\rm Tr}\big[ \gamma  b(s,  \a^0_s, \a^1_s) X^\top_s\big] \big)\mathrm{d}s \bigg].
\end{eqnarray*}
Choose $\a^1 := \psi(\a^0)$. By the structure of $\a^0$, we see $\a^1$ is also piecewise constant and thus $\cP(\a^0, \a^1)\neq \emptyset$. Set $\dbP\in \cP(\a^0,\a^1)\subset \cP_L$. Then
$$
\delta^2
\ge
\dbE^{\dbP}\Big[\int_0^\t\!\!\! \big(\kappa + h_s(z,\g, \a^0_s, \psi(\a^0_s)) +  {\rm Tr}\big[\g b(s, \a^0_s, \psi(\a^0_s)) X^\top_s\big] \big)\mathrm{d}s \Big] 
\ge 
\dbE^{\dbP}\Big[\int_0^\t \!\!\!\Big({\frac c3} +  {\rm Tr}\big[\g  b(s, \a^0_s, \psi(\a^0_s)) X^\top_s\big] \Big)\mathrm{d}s \Big],
$$
thanks to \eqref{small4}. Now following the same arguments as in {\it Step 1}, we obtain $0\ge {\frac c3} \delta - C_{\eps,L}\delta^2-C\delta^{3/2}$, which leads to the desired contradiction by choosing $\delta>0$ small enough.
\qed

\vspace{0.5em}

Finally, we can prove our main result of this section.

\vspace{0.5em}
\no{\bf Proof of Theorem \ref{thm-gamevalue}.} Following similar arguments, one may define $\ul V^{{\rm pc}}_t$ by restricting $\a^1$ to piecewise constant processes in the problem $\ul V_t$, and show that $\ul V^{{\rm pc}}\in {\rm UC}_b(\Th, \dbR)$ is a viscosity solution of the {\rm PPDE}
 \begin{equation}
\label{PPDEul}
-\pa_t \ul V^{{\rm pc}} - \ul H_t(\pa_\o \ul V^{{\rm pc}}, \pa^2_{\o\o} \ul V^{{\rm pc}}) =0.
\end{equation} 
Clearly $\ol V^{{\rm pc}}_T = \xi = \ul V^{{\rm pc}}_T$. Then it follows from Isaacs's condition \eqref{Isaacs} and the uniqueness assumption for viscosity solutions of the PPDEs, that $\ol V^{{\rm pc}} = \ul V^{{\rm pc}}$. Moreover, recalling \eqref{VpcV}, we deduce $\ul V^{{\rm pc}} \le \ul V \le \ol V\le \ol V^{{\rm pc}}$, and therefore $\ol V= \ul V$.
\qed

\section{An extension}
\label{sect-extension}
\setcounter{equation}{0}
In this section we shall relax Assumption \ref{assum-bsi}, and replace the expectation $J_0(\a, \dbP)$ in \eqref{J} with the solution to a nonlinear BSDE, as in the seminal paper of Buckdahn and Li \cite{buckdahn2008stochastic}.

\vspace{0.5em}
For $\a\in \cA$ and $\dbP\in \cP(\a)$, consider the solution $(Y^{\a,\dbP}, Z^{\a,\dbP}, N^{\a,\dbP})$ of the BSDE
 \begin{equation}
 \label{BSDE}
 Y^{\a,\dbP}_t 
 =
 \xi(X) 
 + \int_t^T f_s\big(X_\cd, Y^{\a,\dbP}_s, \si_s(X_\cd, \a_s)Z^{\a,\dbP}_s, \a_s\big) \mathrm{d}s 
 - Z_s^{\a,\dbP}\cdot \mathrm{d}M^\a_s + \mathrm{d}N^{\a,\dbP}_s,\; 
 \dbP-\mbox{a.s.}
 \end{equation}
where $M^\a$ is defined in \eqref{cPa}, and $N^{\a,\dbP}$ is an $\dbP-$martingale orthogonal to $X$ (or equivalently to $M^\a$) under $\dbP$, namely 
$ \la N^{\a,\dbP}, X\ra =0$, $\dbP-\mbox{a.s.}$ Recall \eqref{cPa}, so that BSDE \eqref{BSDE} can be rewritten, $\P-$a.s.
 \begin{align}
 \label{BSDEX}
 Y^{\a,\dbP}_t 
 = 
 \xi(X) + \int_t^T \Big[f_s\big(X_\cd, Y^{\a,\dbP}_s, \si_s(X_\cd, \a_s)Z_s^{\a,\dbP}, \a_s\big) 
 + Z^{\a,\dbP}_s b_s(X_\cd, \a_s)\Big]\mathrm{d}s
 - Z_s^{\a,\dbP}\mathrm{d}X_s + \mathrm{d}N^{\a,\dbP}_s.
 \end{align}

 In this section we shall assume
 \begin{assumption}
 \label{assum-f}
 { $(i)$} $b, \si$ and $\xi$ satisfy the conditions in Assumption \ref{assum-coef}. 
 
 \vspace{0.5em}
 {$(ii)$} $f$ is $\dbF-$progressively measurable in all variables, and the function $f_t(\o, 0,0,a)$ is bounded. 
 
 \vspace{0.5em}
 {$(iii)$} $f$ is locally uniformly continuous in $(t,\o, a)$, locally uniformly in $(y,z)$. That is, for any $R>0$, there exists a modulus of continuity function $\rho_R$ such that
 \begin{equation*}
 |f_t(\o, y, z, a)-f_{t'}(\o', y,z,a)| \le \rho_R\big(d_\infty(\th, \th')\big)~\mbox{for all}~ \th, \th'\in \Th, |y|, |z|\le R, a\in A.
 \end{equation*}
 
 {$(iv)$} $f$ is uniformly Lipschitz continuous in $(y,z)$. 
  \end{assumption}
Under Assumption \ref{assum-f}, the BSDE \eqref{BSDE} is well--posed with $\dbF-$progressively measurable solutions. By abusing the notations, we redefine \eqref{J} as
\begin{equation}
\label{BJ}
J_0(\a, \dbP) :=   Y^{\a,\dbP}_0,
 \end{equation}
and  still define $\ol J_0(\a), \ul J_0(\a)$ by \eqref{J},  the upper and lower values $(\ol V_0, \ul V_0)$ by \eqref{ubar}, and a saddle--point $\hat \a$ of the game by \eqref{saddle}, but using the newly defined $J_0(\a, \dbP)$.

\vspace{0.5em}
 It is quite straightforward to extend the results in Subsection \ref{sect-verification}  to this setting. In this section, we shall focus on extending Theorem \ref{thm-gamevalue} to this general case.  We remark that the nonlinear extension to BSDE does not cause significant difficulty, and as we explained, the main difficulty is the regularity of the value functions due to the dependence of $b$ and $\si$ on $\o$.  As mentioned in Subsection \ref{sect:CRpham2014two}, in the Markovian case this difficulty can be circumvented by using the idea of S\^irbu \cite{sirbu2014stochastic, sirbu2015asymptotic}.  In this section we shall provide a sufficient condition, in addition to Assumption \ref{assum-f},  under which we are able to extend Theorem \ref{thm-gamevalue} for path--dependent games. 
 
 \vspace{0.5em}
We first notice that in this case the Hamiltonians become (again abusing notations)
\begin{equation}
\label{BHamiltonian}
\left.\begin{array}{c}
\dis h_t(\o,y, z,\g, a) :=  {\frac 12}{\rm Tr}\big[ \si\si^\top_t(\o, a) \g\big] +  b_t(\o, a) \cd z+ f_t(\o, y, z\si_t(\o, a), a),\\[0.8em]
\dis \ol H_t(\o, y,z,\g) := \inf_{a_0\in A_0}\sup_{a_1\in A_1} h_t(\o,y, z,\g, a),~  \ul H_t(\o, y,z,\g) :=\sup_{a_1\in A_1}  \inf_{a_0\in A_0} h_t(\o,y, z,\g, a).
\end{array}\right.
\end{equation}

\subsection{Drift reduction by Girsanov transformation} 
In this subsection we illustrate that there is flexibility on the drift $b$, through the Girsanov {\color{black}transformation. 
For any} $\l: \Th \times A \longrightarrow \dbR^d$, denote
\begin{eqnarray}
\label{Girsanov}
\left.\begin{array}{c}
\dis \si' := \si, \; b' := b - \si \l,\; f' := f + z\cdot \l,\; \xi' := \xi,\\[0.3em]
\dis h'_t(\o,y, z,\g, a) :=  {\frac12} {\rm Tr}\big[(\si')^2_t(\o, a)  \g\big] +  b'_t(\o, a) \cd z+ f'_t(\o, y, z\si_t(\o, a), a),
\end{array}\right.
\end{eqnarray}
and define $\cP'(\a),$ $Y^{'\a,\dbP},$ $\ol J_0'(\a)$,..., in an obvious manner. It is clear that $h' = h$, and thus the corresponding Isaacs equation will remain the same. We show that the game values are invariant under this transformation.

\begin{proposition}
\label{prop-Girsanov}
Let $b,\; \si,\; f,\; \xi$ satisfy Assumption \ref{assum-f}. Assume $\l$ is bounded, $\dbF-$progressively measurable in all variables, uniformly continuous in $(t, \o)$ under $d_\infty$, uniformly in $a \in A$, and is locally uniformly continuous in $a$, in the sense of Assumption \ref{assum-f}$(iii)$. Then $\ol J_0'(\a) = \ol J_0(\a)$ and $\ul J_0'(\a) = \ul J_0(\a)$ for any $\a\in \cA$.
\end{proposition}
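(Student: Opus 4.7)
The plan is to show that the Girsanov transformation with kernel $\l$ induces a bijection between $\cP(\a)$ and $\cP'(\a)$ for every $\a\in\cA$, and that this bijection preserves the initial value of the BSDE \eqref{BSDE}, whence both the upper and lower criteria are invariant.

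First, I would set up the bijection. Fix $\a\in\cA$ and $\dbP\in\cP(\a)$. On the canonical space (possibly enlarged by an independent Brownian motion to accommodate a degenerate $\si$), there is a $\dbP-$Brownian motion $W^\dbP$ such that $\mathrm{d}X_s = b_s(\a_s)\,\mathrm{d}s + \si_s(\a_s)\,\mathrm{d}W^\dbP_s$. Since $\l$ is bounded, $\Ec_T := \exp\bigl(-\int_0^T \l_s(\a_s)\cdot \mathrm{d}W^\dbP_s - \tfrac12\int_0^T |\l_s(\a_s)|^2\,\mathrm{d}s\bigr)$ is a true $\dbP-$martingale by Novikov's criterion. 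Setting $\dbP' := \Ec_T\cdot \dbP$, Girsanov's theorem yields that $W^{\dbP'} := W^\dbP + \int_0^\cdot \l_s(\a_s)\,\mathrm{d}s$ is a $\dbP'-$Brownian motion, so
\begin{equation*}
\mathrm{d}X_s = (b - \si\l)_s(\a_s)\,\mathrm{d}s + \si_s(\a_s)\,\mathrm{d}W^{\dbP'}_s = b'_s(\a_s)\,\mathrm{d}s + \si'_s(\a_s)\,\mathrm{d}W^{\dbP'}_s,
\end{equation*}
proving $\dbP'\in\cP'(\a)$. Reversing the construction with $-\l$ in place of $\l$ gives the inverse map and hence a bijection $\cP(\a)\longleftrightarrow \cP'(\a)$.

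Next I would transport the BSDE. Let $(Y,Z,N) := (Y^{\a,\dbP}, Z^{\a,\dbP}, N^{\a,\dbP})$ solve \eqref{BSDE} under $\dbP$. Using $\mathrm{d}M^\a = \mathrm{d}M^{'\a} - \si_s(\a_s)\l_s(\a_s)\,\mathrm{d}s$ and the symmetry of $\si$, \eqref{BSDE} rearranges to
\begin{equation*}
Y_t = \xi + \int_t^T \bigl[f_s\bigl(Y_s, \si_s(\a_s) Z_s, \a_s\bigr) + \bigl(\si_s(\a_s) Z_s\bigr)\cdot \l_s(\a_s)\bigr]\,\mathrm{d}s - \int_t^T Z_s\cdot \mathrm{d}M^{'\a}_s + \int_t^T \mathrm{d}N_s,
\end{equation*}
which by the definition of $f'$ in \eqref{Girsanov} is precisely the $\dbP'-$BSDE associated to $(b',\si',f')$ and control $\a$, provided $N$ remains a $\dbP'-$martingale orthogonal to $X$. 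Orthogonality of quadratic covariations is invariant under equivalent changes of measure, while the Girsanov correction on $N$ vanishes because $N$ is orthogonal to $M^\a$, and hence to the Girsanov kernel $\int_0^\cdot \l_s(\a_s)\,\mathrm{d}W^\dbP_s$. Uniqueness of solutions to the $\dbP'-$BSDE under the Lipschitz condition in Assumption \ref{assum-f}$(iv)$ then yields $Y^{'\a,\dbP'}_0 = Y^{\a,\dbP}_0$, i.e. $J_0(\a,\dbP) = J'_0(\a,\dbP')$, and taking suprema and infima through the bijection $\dbP\longleftrightarrow\dbP'$ gives $\ol J'_0(\a) = \ol J_0(\a)$ and $\ul J'_0(\a) = \ul J_0(\a)$.

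The hard part will be verifying that the orthogonal martingale $N$ indeed survives the change of measure as a $\dbP'-$martingale orthogonal to $X$. This is subtle when $\si$ is degenerate, since $W^\dbP$ is then not canonically defined on $\O$ and one cannot simply invoke $W^\dbP = \si^{-1}(M^\a)$. I would address this by working on an enlarged canonical space on which $W^\dbP$ is well--defined, noting that in the canonical BSDE decomposition one can arrange $N$ to be orthogonal to the full Brownian motion $W^\dbP$ (any component of the martingale part along $W^\dbP$ can be absorbed into $Z$), so that the desired Girsanov invariance of $N$ follows from the standard argument.
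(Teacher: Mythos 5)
Your proposal follows essentially the same route as the paper's proof: a Girsanov bijection between $\cP(\a)$ and $\cP'(\a)$ constructed on the enlarged canonical space $\ol\O=\O\times\O$ where the Brownian motion $W$ is explicit, followed by rewriting the martingale part of the BSDE against the shifted Brownian motion $W^\a$ so that the generator becomes $f'$, and concluding via uniqueness of the BSDE solution. The point you flag about the orthogonal martingale $N$ surviving the change of measure is handled in the paper in the same spirit (equivalence of $\ol\dbP$ and $\ol\dbP'$ on the enlarged space plus uniqueness), so there is no genuinely different argument here.
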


\proof First, it is clear that $b', \si', f', \xi'$ also satisfy Assumption \ref{assum-f}. We proceed in two steps.

\vspace{0.5em}
{\it Step 1.} Let $\ol \O := \O \times \O$ with canonical process $(X, W)$. For any $\a\in \cA$, denote by $\ol \cP(\a)$ the set of probability measures $\ol \dbP$ on $\ol\O$ such that $W$ is a $\ol\dbP-$Brownian motion and \eqref{SDE} holds $\ol\dbP-$a.s. Then clearly $\cP(\a) = \{\dbP := \ol \dbP\circ X^{-1}: \ol\dbP\in \ol\cP(\a)\}$.
Now for each $\ol \dbP\in \ol\cP(\a)$, define 
\begin{equation*}
\mathrm{d}W^\a_t = \mathrm{d}W_t + \l_t(X_\cd, \a_t(X_\cd)) \mathrm{d}t,~ \frac{\mathrm{d} \ol\dbP'}{\mathrm{d}\ol\dbP} := \exp\bigg(-\int_0^T\l_t(X_\cd, \a_t(X_\cd)) \cdot \mathrm{d}W_t - {\frac12}\int_0^T |\l_t(X_\cd, \a_t(X_\cd))|^2 \mathrm{d}t\bigg).
\end{equation*}
Then $W^\a$ is an $\ol\dbP'-$Brownian motion and 
\begin{equation*}
\mathrm{d}X_t = b'_t(X_\cd, \a_t(X_\cd)) \mathrm{d}t + \si'_t(X_\cd, \a_t(X_\cd)) \mathrm{d}W^\a_t,\;\ol\dbP'-\mbox{a.s.},
\end{equation*}
that is to say $\ol \dbP' \in \ol \cP'(\a)$. Similarly one may construct $\ol\dbP\in \ol\cP(\a)$ from $\ol\dbP'\in \ol\cP'(\a)$, which implies that there is a one--to--one correspondence between $\ol\cP(\a)$ and $\ol\cP'(\a)$ through Girsanov transformations. 

\vspace{0.5em}
{\it Step 2.} We now turn to the backward problem. Since the solution of \eqref{BSDE} is $\dbF^X-$measurable, then, by embedding them into the enlarged canonical space $\ol\O$, we have
\begin{equation*}
Y^{\a,\dbP}_t = \xi(X) + \int_t^T f_s(X_\cd, Y^{\a,\dbP}_s, \ol Z^{\a,\dbP}_s, \a_s(X_\cd)) \mathrm{d}s - \int_t^T \ol Z_s^{\a,\dbP}\cdot \mathrm{d}W_s + N^{\a,\dbP}_T - N^{\a,\dbP}_t,\; \ol\dbP-\mbox{a.s.}
\end{equation*}
where $\ol Z^{\a,\dbP}_t =   \si_t(X_\cd, \a_t(X_\cd))Z^{\a,\dbP}_t$. This implies
\begin{equation*}
Y^{\a,\dbP}_t = \xi'(X) + \int_t^T f'_s(X_\cd, Y^{\a,\dbP}_s, \ol Z^{\a,\dbP}_s, \a_s(X_\cd)) \mathrm{d}s - \int_t^T \ol Z_s^{\a,\dbP}\cdot \mathrm{d}W^\a_s + N^{\a,\dbP}_T - N^{\a,\dbP}_t,\; \ol\dbP-\mbox{a.s.}
\end{equation*}
Notice that $\ol\dbP'$ is equivalent to $\ol\dbP$, so that the last decomposition also holds $\ol\dbP'$-a.s. Denote $\dbP' := \ol\dbP'\circ X^{-1}$. 
By the uniqueness of the solution to the BSDE, we see that $Y^{\a,\dbP} = Y^{\a,\dbP'}$.  Since $\cP'(\a) = \{\dbP' := \ol\dbP'\circ X^{-1}:  \ol\dbP' \in \ol\cP'(\a)\}$ and recalling from Step 1 that $\ol\cP(\a)$ and $\ol\cP'(\a)$ are in a one--to--one correspondence, we see that $\ol J'_0(\a) = \ol J_0(\a)$ and $\ul J_0'(\a) = \ul J_0(\a)$. 
\qed

\vspace{0.5em}

\begin{remark}
\label{rem-Girsanov}
{\rm (i)} While very natural, the above result relies heavily on our formulation that $\a$ $($and $\l)$ depends on $X$ only. When $\a$ $($or $\l)$ depends on $W$, the one to one correspondence in Step 1 above fails, and $\a(W) \neq \a(W^\a)$, thus the game values may not be equal under the Girsanov transformation, even though the Hamiltonians remain the same. See Subsection \ref{sect:strong} where the upper and lower values of the corresponding game are not related to the solutions to the corresponding Isaacs equations.

{\rm (ii)} Notice that the transformation in \reff{Girsanov} changes the map $f$ as well. Given Assumption \ref{assump:2bsde} {\rm (i)}, if we apply the above transformation, then the convex sets in Assumption \ref{assump:2bsde} {\rm (ii)} reduces to
\[
\big\{\big(\sigma_t(x,a_0,a), f_t(x, a_0, a)+ z \cd \l_t(x, a_0, a)\big):a\in A_1\big\},\; \big\{\big(\sigma_t(x,a,a_1), f_t(x,a,a_1)+ z \cd \l_t(x, a, a_1)\big):a\in A_0\big\}.
\]
Hence, Proposition \ref{prop-Girsanov} does not help simplifying Assumption \ref{assump:2bsde} {\rm (ii)}.
\end{remark}

\subsection{State independent range of controls}

In this subsection, we relax Assumption \ref{assum-bsi}, and assume the following.

 \begin{assumption}
 \label{assum-stateindependent}
{\rm (i)} $b = b_0(t, \o, a) + \si(t,\o, a) \l(t,\o,a)$, where $\l$ is bounded, $\dbF-$progressively measurable, uniformly continuous in $(t, \o)$ under $d_\infty$, uniformly in $a \in A$, in the sense of Assumption \ref{assum-f} {\rm (iii)}. 
\\
{\rm (ii)} Player $1$ has state independent range of controls with respect to $(b_0, \si)$ in the sense that for any $t\in [0, T], a_0\in A_0$, the range ${\color{black}\mathbf{R}}_t(a_0):=\{(b_0, \si)(t,\o, a_0, a_1): a_1\in A_1\}$ is independent of $\o$. 
\\
{\rm (iii)} Player $0$ has state independent range of controls with respect to $(b_0, \si)$ in a similar sense.
\end{assumption}

Notice that Assumption \ref{assum-bsi} obviously implies Assumption \ref{assum-stateindependent}.  We next provide a non--trivial example  satisfying Assumption \ref{assum-stateindependent}. For simplicity, we shall only focus on $\si$ and verify $(ii)$.
\begin{example}
\label{eg-stateindependent}
%
%
Let $d=1$, $A_0=A_1=\dbR$, $\eta \in {\rm UC}(\Th)$, {\color{black}and $\ul \si, \ol \si: \dbR\to (0, 1)$  satisfy $\ul \si < \ol\si$, $\lim_{x\to -\infty} \ul \si(x) = \lim_{x\to -\infty} \ol \si(x) = 0$, $\lim_{x\to \infty} \ul \si(x) = \lim_{x\to \infty} \ol \si(x) = 1$ $($for instance, we could take $\ul\si$ to be the cdf of the standard normal distribution, and $\ol \si(x) := \ul \si(x+1))$. }
Define
\begin{align}
\label{si}
&\si_t(\o, a) := \big(\ul \si(a_0) + \ul \si(a_1)\big) \vee \big( \eta_t(\o)+ a_0+a_1\big) \wedge  \big(\ol \si(a_0) + \ol \si(a_1)\big).
\end{align}
For any $t, \o, a_0$,  one may check straightforwardly that 
\begin{align*}
\inf_{a_1\in A_1} \si_t(\o, a_0, a_1) = \ul \si(a_0),\; \sup_{a_1\in A_1} \si_t(\o, a_0, a_1) =   \ol \si(a_0)+1.
\end{align*}
That is, $\mathbf{R}_t(a_0) = (\ul \si(a_0),  \ol \si(a_0)+1)$ $($the $\si$ part only$)$ is independent of $\o$.

\vspace{0.5em}
 
 Moreover, we verify that 
 \begin{equation*}
\inf_{a_0\in A_0} \sup_{a_1\in A_1} \si_t(\o, a) = \inf_{a_0\in A_0} [\ol \si(a_0)+1] = 1,\;
\sup_{a_1\in A_1}\inf_{a_0\in A_0}  \si_t(\o, a)= \sup_{a_1\in A_1} \ul \si(a_1)  = 1.
\end{equation*}
Then the Isaacs's condition $\inf_{a_0\in A_0} \sup_{a_1\in A_1} \si^2_t(\o, a) \g = \sup_{a_1\in A_1}\inf_{a_0\in A_0}  \si^2_t(\o, a) \g$ is immediately checked for $\g\ge 0$. One can similarly verify the Isaacs's condition for $\g<0$.
\end{example}

Our main result which generalises Theorem \ref{thm-gamevalue} is given below.
\begin{theorem}
\label{thm-gamevalue2} 
Let Assumptions \ref{assum-coef}, \ref{assum-stateindependent}, and Isaacs condition \eqref{Isaacs} hold. Then

\vspace{0.5em}
{ $(i)$} The following path--dependent Isaacs equation has a viscosity solution $u\in \mbox{\rm UC}_b(\Theta,\dbR)$
 \begin{equation}
 \label{PPDE1}
 -\pa_t u - H_t(\o, u, \pa_\o u, \pa^2_{\o\o} u) =0,\; t<T, \; u_T = \xi.
 \end{equation}
{$(ii)$} Assume further that uniqueness of viscosity solution for the above {\rm PPDE} holds in the class $ \mbox{\rm UC}_b(\Theta,\dbR)$. Then $\ol V_0=\ul V_0=u_0(0)$.
 \end{theorem}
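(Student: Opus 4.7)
The strategy is to mirror the scheme developed for Theorem \ref{thm-gamevalue} in Section \ref{sect-Vt}, with Assumption \ref{assum-stateindependent} taking over from Assumption \ref{assum-bsi} in the crucial regularity step. As a preliminary reduction, I would invoke Proposition \ref{prop-Girsanov} with the map $\lambda$ of Assumption \ref{assum-stateindependent}(i), which leaves both the values $\ol V_0, \ul V_0$ and the Hamiltonian $H$ unchanged, so that one may assume without loss of generality that $b = b_0$. Then introduce $\ol V^{\rm pc}_t(\o)$ as in \eqref{Vpc} by restricting $\a^0$ to the class of piecewise constant controls \eqref{pc}, and symmetrically $\ul V^{\rm pc}_t(\o)$ by restricting $\a^1$. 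The trivial sandwich $\ul V^{\rm pc} \le \ul V \le \ol V \le \ol V^{\rm pc}$ will reduce everything to showing that $\ol V^{\rm pc} = \ul V^{\rm pc}$.

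The heart of the proof, and the main obstacle, is to establish the analogue of Lemma \ref{lem-Vreg}: uniform continuity of $\ol V^{\rm pc}$ in $\o$, uniformly in $t$. In contrast with Theorem \ref{thm-gamevalue}, the sets $\cP(t,\a)$ now depend genuinely on the initial path through the coefficients $b^{t,\o}, \si^{t,\o}$, so one can no longer fix a single measure and vary the path. Here Assumption \ref{assum-stateindependent}(ii) is essential: for any $t, a_0, s$, the range $\mathbf{R}_{t+s}(a_0)$ of $(b_0, \si)(t+s, \cd, a_0, \cd)$ over $A_1$ is independent of its path argument. A measurable selection theorem (Kuratowski--Ryll-Nardzewski, or Filippov's lemma) then produces, for any admissible triple $(\a^0, \a^1, \P)$ starting from $\o$ and any $\o'$ with $d_\infty((t,\o),(t,\o'))$ small, a progressively measurable control $\widetilde\a^1$ such that
\[
(b_0, \si)\big(t+s, \o' \otimes_t X, \a^0_s(X), \widetilde\a^1_s(X)\big)
\;=\;
(b_0, \si)\big(t+s, \o \otimes_t X, \a^0_s(X), \a^1_s(X)\big),\quad \P\mbox{-a.s.}
\]
Consequently $\P$ itself is a weak solution of the SDE at $\o'$ with control $(\a^0, \widetilde\a^1)$, and the cost discrepancy $|J_t(\o, \a, \P) - J_t(\o', (\a^0, \widetilde\a^1), \P)|$ is controlled by the uniform continuity of $\xi$ and $f$ in $(t,\o)$; the residual dependence on $\widetilde\a^1 \neq \a^1$ through $f$ will be absorbed by passing to the Hamiltonian, since the supremum of $h_t(\o, z, \gamma, a_0, \cd)$ over the fiber $\{a_1 : (b_0,\si)(t,\o,a_0,a_1) = \Sigma\}$ inherits uniform continuity in $(t,\o)$ from the data. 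A symmetric argument using Assumption \ref{assum-stateindependent}(iii) will handle $\ul V^{\rm pc}$.

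Once this regularity is in hand, the dynamic programming principle for $\ol V^{\rm pc}$ and $\ul V^{\rm pc}$ is obtained by adapting Theorem \ref{thm-DPP}: the ``$\le$'' direction (Step 1.1) only uses the continuity of $J$ in $\o$ just established together with the concatenation properties of Lemma \ref{lem-cP}, while the ``$\ge$'' direction (Step 1.2) rests on the piecewise constant structure of $\a^0$ and does not exploit path-independence of the coefficients; uniform continuity in $t$ then follows from the DPP exactly as in Step 2 there. The viscosity property of Proposition \ref{prop-viscosity} carries over unchanged, since its proof invokes only the uniform continuity of $b, \si, f$ in $(t,\o)$ together with the DPP. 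Thus $\ol V^{\rm pc}$ will be a $\cP_L$-viscosity solution of the upper Isaacs PPDE and $\ul V^{\rm pc}$ of the lower one, both in $\mbox{UC}_b(\Theta,\R)$.

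Under Isaacs' condition \eqref{Isaacs} these two PPDEs coincide with \eqref{PPDE1}, and the assumed uniqueness of viscosity solutions in $\mbox{UC}_b(\Theta,\R)$ forces $\ol V^{\rm pc} = \ul V^{\rm pc} =: u$, which by the sandwich yields $\ol V_0 = \ul V_0 = u_0(0)$, and in particular proves the existence claim in (i). The delicate points throughout are the measurable selection (one must verify joint measurability in $(\o',s,X)$ and progressive measurability of the resulting $\widetilde\a^1$ in the sense of Section \ref{sect:setting}), and the verification that the modulus of continuity provided by Assumption \ref{assum-f}(iii) genuinely suffices to control the Hamiltonian-level discrepancies when the ``matched'' controls of Player~1 differ from the original ones.
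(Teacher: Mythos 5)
Your proposal follows essentially the same route as the paper: Girsanov reduction to $\lambda=0$ via Proposition \ref{prop-Girsanov}, restriction of $\a^0$ to piecewise-constant controls, use of the state-independent range of Assumption \ref{assum-stateindependent}(ii) to recover uniform continuity of the inner value in $\o$, and then the DPP, viscosity property, and uniqueness argument of Section \ref{sect-Vt}. The only (presentational) difference is that the paper packages your control-matching/measurable-selection step as a reparametrisation of Player 1's problem over the $\o$-independent set of coefficient pairs $\Ac^1_t(\a^0)$, with the running cost replaced by its supremum over the fiber $A_1(s,\tilde b,\tilde\si,a_0)$ and the equality of values justified by BSDE comparison — which is exactly the fiber-supremum device you invoke to absorb the residual dependence of $f$ on $\widetilde\a^1\neq\a^1$.
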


\proof First, by Proposition \ref{prop-Girsanov}, it suffices to prove the theorem in the case $\l = 0$.  Thus in this proof we assume the two players have state independent range of controls with respect to $(b, \si)$. We shall focus on the upper value process $\ol V_t(\o)$ and follow the arguments in Section \ref{sect-Vt}.

\vspace{0.5em}

{\color{black}
Fix $(t,\o)\in \Th$. In this case \reff{sit}  becomes:
\begin{equation}
\label{sito}
b^{t,\o}(s, \tilde \o, a) := b((t+s)\wedge T, \o\otimes _t \tilde \o, a),\; \si^{t, \o}(s, a) := \si((t+s)\wedge T, \o\otimes _t \tilde \o,  a), \; (s,\tilde \o, a)\in \Th \times A.
\end{equation}
For $\a\in \ol\cA$, let $\cP(t,\o,\a)$ denote the set of weak solutions to the SDE
$$
X_s = \int_0^s b^{t,\o}(r, X_\cd, \a_r(X_\cd)) dr + \int_0^s \si^{t,\o}(r, X_\cd, \a_r(X_\cd)) dW_r.
$$
We emphasise that in this case $\cP$ depends on $\o$. Define $\cA_t(\o)$ in an obvious way. Then \reff{Vt} becomes
\begin{equation}\label{Vt2}
 \ol V_t(\o) :=  \inf_{\alpha^0\in\cA_t^{0}(\o)}S_t(\o, \a^0),\q S_t(\o, \a^0):= \sup_{\a^1\in \cA^1_t(\o)} \sup_{\dbP\in \cP(t,\o, \a^0, \a^1)}Y^{t,\o, \a^0,\a^1, \dbP}_0,
 \end{equation}
where
\begin{align*}
 Y^{t,\o, \a,\dbP}_s =&\ \xi^{t,\o} + \int_s^{T-t} \Big(f^{t,\o}_r\big(X_\cd, Y^{t,\o,\a,\dbP}_r, \si^{t,\o}_r(X_\cd, \a_r)Z_r^{t,\o,\a,\dbP}, \a_r\big) 
 + Z^{t,\o,\a,\dbP}_r b^{t,\o}_r(X_\cd, \a_r)\Big)\mathrm{d}r \\
 &-\int_s^{T-t} Z_s^{t,\o,\a,\dbP}\mathrm{d}X_r- \int_s^{T-t} \mathrm{d}N^{t,\o,\a,\dbP}_s,\; \dbP-\mbox{a.s.}
\end{align*}
Notice that, for fixed $(\a, \dbP)$, by BSDE arguments one can easily show that $\o\longmapsto Y^{t,\o, \a,\dbP}_0$ is uniformly continuous. However, the sets $\cP(t,\o, \a)$ and $\cA_t(\o)$ may depend on $\o$, and thus in general we cannot fix $(\a,\dbP)$ for different $\o$. This causes the main difficulty  for obtaining the desired regularity of $\ol V$ and $S$.

\vspace{0.5em}
We shall use Assumption  \ref{assum-stateindependent} (ii) to get around this difficulty.  As in Section \ref{sect-Vt}, we restrict $\a^0$ to $\cA^{0, {\rm pc}}_t$. We emphasise that $\cA^{0, {\rm pc}}_t$ does not depend on $\o$ and $\cA^{0, {\rm pc}}_t \subset \cA^0_t(\o)$ for all $\o$. We then modify \reff{Vpc} as
 \begin{equation}
 \label{Vpc2}
  \ol V^{{\rm pc}}_t(\o) 
 \;:=\;  \inf_{\alpha^0\in\cA_t^{0,{\rm pc}}} S_t(\o, \a^0).
 \end{equation}
 Fix $\a^0\in\Ac^{0, {\rm pc}}_t$, define
\[\Ac^1_t\big(\a^0\big):=\big\{\big(\tilde b,\tilde\sigma\big)\in\mathbb L^0(\Theta)\times\mathbb L^0(\Theta):\big(\tilde b_s(\tilde\omega),\tilde\sigma_s(\tilde\omega)\big)\in\mathbf{R}_{t+s}\big(\a_s^0(\tilde\omega)\big),\; (s, \tilde\o)\in\Theta \big\}.\]
We emphasise that, by Assumption  \ref{assum-stateindependent} (ii), $\Ac^1_t\big(\a^0\big)$ does not depend on $\o$. For each $(\tilde b, \tilde \si)\in  \Ac^1_t(\a^0)$,  let $\Pc(\tilde b,\tilde\sigma)$ denote the set of weak solutions of the SDE
\[\widetilde X_s=\int_0^s\tilde b_r\big(\widetilde X_\cdot\big)\mathrm{d}r+\int_0^s\tilde \sigma_r\big(\widetilde X_\cdot\big)\mathrm{d}W_r.\]
One can check straightforwardly that $\cup_{(\tilde b, \tilde \si)\in  \Ac^1_t(\a^0)} \tilde \cP(\tilde b, \tilde \si) =  \cup_{\a^1\in \cA^1_t(\o)}  \cP(t,\o, \a^0, \a^1)$ for all $\o\in\O$. Moreover, denote
$A_1(s,\tilde b,\tilde\sigma,a_0) := \{a_1\in A_1: (\tilde b, \tilde \si) \in  \mathbf{R}_s(a_0,a_1)\}$, and
\[\tilde f_s^{t,\omega, \tilde b, \tilde \si}(\tilde\o,y,z,a_0):=\underset{a_1\in A_1(t+s,\tilde b_s(\tilde\o),\tilde\sigma_s(\tilde\o),a_0)}{\sup}\; f_s^{t,\omega}(\tilde\o,y,z,a_0,a_1). \]
Then, by the comparison principle for BSDEs, it is immediate to verify that
\begin{equation}
\label{Spc}
S_t(\o, \a^0):= \sup_{(\tilde b, \tilde \si)\in  \Ac^1_t(\a^0)}\sup_{\dbP\in \tilde \cP(\tilde b, \tilde \si)}  \tilde Y^{t,\o, \tilde b, \tilde \si, \dbP}_0,
\end{equation}
where
\begin{align*} 
\tilde Y^{t,\o, \tilde b, \tilde \si, \dbP}_s =&\ \xi^{t,\o} + \int_s^{T-t} \Big(\tilde f^{t,\o}_r\big(X_\cd, Y^{t,\o,\tilde b, \tilde \si,\dbP}_r, \tilde \si_r(X_\cd)Z_r^{t,\o,\tilde b, \tilde\si,\dbP}, \a^0_r(X_\cd)\big) 
 + Z^{t,\o,\tilde b, \tilde\si,\dbP}_r \tilde b_r(X_\cd)\Big)\mathrm{d}r\\
 & - \int_s^{T-t}Z_s^{t,\o,\tilde b, \tilde\si,\dbP}\mathrm{d}X_r- \int_s^{T-t} \mathrm{d}N^{t,\o,\tilde b, \tilde\si,\dbP}_s,\; \dbP-\mbox{a.s.}
\end{align*}
Now for each $(\tilde b, \tilde \si)\in  \Ac^1_t(\a^0)$ and $\dbP\in \tilde \cP(\tilde b, \tilde \si)$, which do not depend on $\o$, by standard BSDE arguments one can show that $\o\longmapsto  \tilde Y^{t,\o, \tilde b, \tilde \si, \dbP}_0$ is uniformly continuous.  Then as in Lemma \ref{lem-Vreg} we see that $S$ and $\ol V^{\rm pc}$ are uniformly continuous in $\o$. The rest of the proof follows then from the arguments in Section \ref{sect-Vt}, combined with standard BSDE arguments. We leave the details to interested readers.
\qed
}

\vspace{0.5em}
As mentioned in Remark \ref{rem-gamevalue} $(iii)$, the approach in Ekren and Zhang \cite{ekren2016pseudo} can be used in this context to identify sufficient conditions for $(ii)$ to hold in Theorem \ref{thm-gamevalue2}. However, we note that the definition of viscosity solution is slightly different in  \cite{ekren2016pseudo}. Rigorously speaking, if we want to apply the results of  \cite{ekren2016pseudo} to conclude the existence of game value in Theorem 5.6, we need to verify that $\ul V^{\rm pc}$ and $\ol V^{\rm pc}$ are viscosity solutions in the sense of  \cite{ekren2016pseudo}. This is done in  \cite{ekren2016pseudo}, but using the formulation of strategy against control as in Subsection \ref{sect:fleming1989existence}, exactly due to the regularity issue. As we saw, Assumption  \ref{assum-stateindependent} enables us to overcome the regularity difficulty, and thus we can apply the arguments in \cite{ekren2016pseudo} to our context. We leave the details to interested readers.

\section{Proof of Theorems \ref{thm:2bsde} and \ref{thm:2bsde2}}
\label{sect:2BSDE}
\setcounter{equation}{0}
Throughout this section Assumptions \ref{assum-coef} and \ref{assump:2bsde} are in force.

\subsection{A relaxed formulation}

To establish the wellposedness of 2BSDEs, we shall apply the results of Possama\"i, Tan and Zhou \cite{possamai2015stochastic}, which relie on the dynamic programming principle in  El Karoui and Tan \cite{karoui2013capacities,karoui2013capacities2} (see also Nutz and van Handel \cite{nutz2013constructing}). However, we shall note that the weak formulation considered in \cite{karoui2013capacities,karoui2013capacities2} is different from the feedback controls in this paper. So our first goal is to establish the equivalence between these two formulations.\footnote{Most of the arguments here are from discussions with Xiaolu Tan, who we thank warmly.}

\vspace{0.5em}
{\color{black}We now fix $\a^0 \in \cA^0_0$, and denote $\ol\f_t(\o, a_1) := \f_t(\o, \a^0_t(\o), a_1)$ for $\f=b, \si, f$.}  The weak formulation in \cite[Section 1.2, pages 7--9]{karoui2013capacities2} consists in working on a fixed canonical space for both the controlled process and the associated controls.  Let $\mathcal C([0,T],\mathbb R^d)$ be the canonical space of continuous functions on $[0,T]$ with values in $\mathbb R^d$, and let $\mathbb A$ be the collection of all finite and positive Borel measures on $[0,T]\times A_1$, whose projection on $[0,T]$ is the Lebesgue measure. In other words, every $q\in\mathbb A$ can be disintegrated as $q(ds,da)=q_s(da)ds$, for an appropriate kernel $q_s$. The weak formulation requires to consider a subset of $\mathbb A$, namely the set $\mathbb A_0$ of all $q\in\mathbb A$ such that the kernel $q_s$ is of the form $\delta_{\phi_s}(da)$ for some Borel function $\phi$. We then define the canonical space $\Omega:= \mathcal C([0,T],\mathbb R^d)\times\mathbb A$, with canonical process $(X,\Lambda)$, where
\[X_t(\omega,q):=\omega(t),\; \Lambda(\omega,q):=q,\; (t,\omega,q)\in[0,T]\times\Omega.\]
The associated canonical filtration is defined by $\mathbb F:=(\mathcal F_t)_{t\in[0,T]}$ where
\[\mathcal F_t:=\sigma\Big((X_s,\Delta_s(\varphi)),\; (s,\varphi)\in[0,t]\times C_b([0,T]\times A_1)\Big),\; t\in [0,T],\]
where $C_b([0,T]\times A_1)$ is the set of bounded continuous functions on $[0,T]\times A_1$, and $\Delta_s(\varphi):=\int_0^s\int_A\varphi(r,a)\Lambda(\mathrm{d}r,\mathrm{d}a)$, for all $(s,\varphi)\in[0,T]\times C_b([0,T]\times A_1)$.
We next define the following set of measures on $(\Omega,\Fc_T)$:
\[\mathcal P:=\Big\{\P:M(\varphi)\text{ is an $(\P,\mathbb F)-$martingale for all $\varphi\in C^2_b(\mathbb R^d)$, and $\mathbb P[X_0=x_0,\Lambda\in\mathbb A_0]=1$}\Big\},\]
where $C^2_b(\mathbb R^d)$ is the set of bounded twice continuously differentiable functions with bounded derivatives, and
\[
M_s(\varphi):=\varphi(X_s)-\int_0^s\int_{A_1}\bigg(\ol b_r(X_\cdot,a)\cdot D\varphi(X_r)+\frac12{\rm Tr}\big[(\ol\sigma\ol\sigma^\top)_r(X_\cdot,a)D^2\varphi(X_r)\big]\bigg)\Lambda(\mathrm{d}r,\mathrm{d}a).\]
{\color{black}The associated weak formulation of the control problem is then defined by
\[
V_{\rm w}:=\sup_{\P\in\Pc} J_w (\dbP)\q \mbox{where}\q J_w(\dbP) := \mathbb E^\mathbb P\bigg[g(X_\cdot) + \int_0^T \int_{A_1} \ol f_t(X_\cd, a) \Lambda(\mathrm{d}t, \mathrm{d}a)\bigg].
\]

\begin{lemma}
\label{lem-weak}
$\cP = \cP^0_0(\a^0)$ and $V_{\rm w} = \sup_{\P\in\Pc_0^1(\a^0)}\E^{\P^\a}\Big[\ol{Y}_0^{\a^0}\Big]$.
\end{lemma}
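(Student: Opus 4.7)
The plan is to establish the lemma in two separate parts. First, I would identify the set $\mathcal P$ of relaxed weak-formulation measures on $\Omega=C([0,T];\mathbb{R}^d)\times\mathbb{A}$ with $\mathcal P_0^1(\alpha^0)$ (via the natural Girsanov bijection), and second, show that under this identification the relaxed value $V_{\rm w}$ coincides with the 2BSDE representation. Throughout I would use Assumption \ref{assump:2bsde}(i) to handle the drift through the Girsanov change of measure $\mathbb P\mapsto\mathbb P^\alpha$, and Assumption \ref{assump:2bsde}(ii) to rule out any relaxation gap.

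For the first part, given $\mathbb P\in\mathcal P$, the condition $\mathbb P[\Lambda\in\mathbb A_0]=1$ gives a disintegration $\Lambda(ds,da)=\delta_{\phi_s}(da)ds$ for some $\mathbb F$-progressively measurable $A_1$-valued $\phi$ on $\Omega$. The martingale property of $M(\varphi)$ for every $\varphi\in C^2_b(\mathbb{R}^d)$ exactly characterises the law of $X$ under $\mathbb P$ as a solution of the martingale problem associated to the SDE with drift $\overline b_s(X_\cdot,\phi_s)$ and diffusion $\overline\sigma_s(X_\cdot,\phi_s)$. Using standard measurable selection / conditioning (in the spirit of El Karoui--Tan \cite{karoui2013capacities,karoui2013capacities2}), one produces an $\mathbb F^X$-progressive control $\alpha^1\in\overline{\mathcal A}^1$ such that the marginal law of $X$ under $\mathbb P$ coincides with a weak solution associated to $\alpha=(\alpha^0,\alpha^1)$; Assumption \ref{assump:2bsde}(ii) is the key ingredient that allows us to replace an averaged/relaxed kernel by a strict feedback control, via a mimicking argument at the level of the generator $(b,\sigma,f)$. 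Applying Girsanov with $\lambda$ from Assumption \ref{assump:2bsde}(i) then removes the drift and identifies $\mathbb P$ with an element of $\mathcal P_0(\alpha)\subset\mathcal P_0^1(\alpha^0)$. The converse direction is direct: given $\mathbb P'\in\mathcal P_0^1(\alpha^0)$ with associated $\alpha^1$, form $\mathbb P^\alpha$ via Girsanov and lift it to $\Omega$ by setting $\Lambda=\delta_{\alpha^1_s(X_\cdot)}ds$; the martingale problem is then satisfied by construction.

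For the second part, under the bijection just described, the relaxed payoff rewrites
\[
\mathbb E^{\mathbb P}\!\left[g(X_\cdot)+\int_0^T\!\!\int_{A_1}\!\overline f_t(X_\cdot,a)\Lambda(dt,da)\right]
=\mathbb E^{(\mathbb P')^\alpha}\!\left[\xi+\int_0^T\! f_t(X_\cdot,\alpha_t)\,dt\right]
=J_0(\alpha,\mathbb P'),
\]
so that $V_{\rm w}=\sup_{\alpha^1\in\mathcal A_0^1}\sup_{\mathbb P'\in\mathcal P_0(\alpha)}\mathbb E^{(\mathbb P')^\alpha}[\,\cdot\,]$. It then remains to identify this double supremum with $\sup_{\mathbb P\in\mathcal P_0^1(\alpha^0)}\mathbb E^{\mathbb P^\alpha}[\overline Y_0^{\alpha^0}]$, which is the standard 2BSDE representation for the stochastic-control problem with fixed $\alpha^0$: the generator $\overline F$ absorbs the inner supremum over $\alpha^1$ (restricted to $A_1(t,\omega,\alpha_t^0)$, which is where the volatility-matching condition is automatic under $\mathbb P$), and the nondecreasing, minimal process $\overline K^{\alpha^0}$ delivers the outer supremum over $\mathbb P$; this is exactly the construction of saturated 2BSDEs in Possama\"i--Tan--Zhou \cite{possamai2015stochastic}, once the dynamic programming principle from \cite{karoui2013capacities,karoui2013capacities2} is available.

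The main obstacle will be the passage from the enlarged canonical space (where $\phi$ is only $\mathbb F^{X,\Lambda}$-progressive) to a bona fide $\mathbb F^X$-progressive feedback control $\alpha^1$; this is the step that genuinely requires Assumption \ref{assump:2bsde}(ii), since without convexity of the joint range $\{(b,\sigma,f)(t,\omega,\alpha_0,a_1'):a_1'\in A_1\}$ one could a priori only mimic relaxed kernels by randomised controls, and the Markovian-projection/chattering-style argument would break down. Everything else (the martingale-problem identification, the Girsanov change of measure, and the 2BSDE supremum representation) is fairly standard once Assumption \ref{assum-coef} is in force and the coefficients are bounded and sufficiently regular.
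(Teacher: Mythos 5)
Your proposal is correct and follows essentially the same route as the paper: disintegration of $\Lambda$ into a Dirac kernel $\delta_{\phi_s}$, a mimicking/conditioning step projecting the coefficients onto $\F^X$ (the paper invokes Wong's representation theorem for this), the convexity of the range of $(b,\si,f)$ from Assumption \ref{assump:2bsde}(ii) combined with measurable selection to realise the conditioned coefficients by a genuine $\F^X$-progressive feedback control $\a^1$, and finally the Girsanov change of measure and the saturated 2BSDE representation of Possama\"i--Tan--Zhou for the value. The only cosmetic difference is that you describe the projection step generically as a Markovian-projection argument where the paper cites the concrete reference (Wong, Theorem 4.2), and you make the Girsanov step explicit where the paper leaves it implicit.
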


\begin{proof} By the requirement $\Lambda \in  \mathbb A_0$, $\dbP\in \cP$ amounts to say there exist a process $\phi^\dbP$, possibly in an enlarged space, and a Brownian motion $W^\dbP$, such that 
\[
X_t= x_0+\int_0^t \ol b_s\big(X_\cdot,\phi_s^\P\big)\mathrm{d}s+\int_0^t\ol\sigma_s\big(X_\cdot,\phi_s^\P\big)\mathrm{d}W_s^\P,\; \P-{\rm a.s.},\; \text{and}\; J_w(\dbP) := \mathbb E^\mathbb P\Big[g(X_\cdot) + \int_0^T  \ol f_t(X_\cd, \phi^\dbP_t) \mathrm{d}t\Big].
\]
On the other hand, the set $ \cP^0_0(\a^0)$ corresponds to those $\dbP$ such that $\phi^\dbP$ is $\dbF^X-$progressively measurable. Then clearly $\cP^0_0(\a^0) \subset \cP$.  Now fix $\dbP\in \cP$. Apply the classical results of Wong \cite[Theorem 4.2]{wong1971representation}, we obtain the existence of another $\P-$Brownian motion $\widetilde W^\P$ such that
\[
    X_t= x_0+\int_0^t\widetilde b_s^\P \mathrm{d}s+\int_0^t\widetilde\sigma_s^\P \mathrm{d}\widetilde W_s^\P,\;  \P-{\rm a.s.},\; \text{and}\; J_w(\dbP) := \mathbb E^\mathbb P\Big[g(X_\cdot) + \int_0^T \widetilde f_t^\P  \mathrm{d}t\Big]
\]
where $\widetilde b_s^\P:=\mathbb E^\P\big[\ol b_s\big(X_\cdot,\phi_s^\P\big)\big|\Fc_s^X\big]$, $\widetilde\sigma_s^\P:=\mathbb E^\P\big[\ol\sigma_s\big(X_\cdot,\phi_s^\P\big)\big|\Fc_s^X\big]$, and $\widetilde f_s^\P:=\mathbb E^\P\big[ \ol f_s\big(X_\cdot,\phi_s^\P\big)\big|\Fc_s^X\big]$. Using the fact that range of $(\ol b, \ol \si, \ol f)$ is assumed to be convex, there exists an $\F^X-$progressively measurable process $\widetilde\alpha^\P$ such that $\widetilde b_s^\P=\ol b_s\big(X_\cdot, \tilde \a_s^\P\big)$, $\widetilde \sigma_s^\P=\ol\sigma_s\big(X_\cdot, \tilde \a_s^\P\big)$, and $\widetilde f_s^\P=\ol f_s\big(X_\cdot, \tilde \a_s^\P\big)$. This implies that $\dbP\in \Pc^0_0(\a^0)$ and $J_w(\dbP) = J(\a^0, \tilde \a^\dbP)$, thus inducing the required result. 
\end{proof}
}

\subsection{Proof of Theorem \ref{thm:2bsde}}  
We will only prove the result for the 2BSDE \eqref{eq:2bsde4}, the remaining proof is similar. We first address the well--posedness by verifying the conditions of Possama\"i, Tan and Zhou \cite{possamai2015stochastic}. We introduce the dynamic version $\Pc^0_0(\a^1)(t,\omega)$ of the set $\Pc^0_0(\a^1)$, by considering the same {\rm SDE} on $[t,T]$ starting at time $t$ from the path $\omega\in\Omega$.

\vspace{0.5em}
We first verify that the family $\{\Pc^0_0(\a^1)(t,\omega),\; (t,\omega)\in[0,T]\times\Omega\}$ is saturated, in the terminology of \cite[Definition 5.1]{possamai2015stochastic}, {\it i.e.} for all $\P^1\in\Pc^0_0(\a^1)(t,\omega)$, and $\P^2\sim\P^1$ under which $X$ is an $\P^2-$martingale, we must have $\P^2\in\Pc^0_0(\a^1)(t,\omega)$. To see this, notice that the equivalence between $\P^1$ and $\P^2$ implies that the quadratic variation of $X$ is not changed by passing from $\P^1$ to $\P^2$. Hence the required result.

\vspace{0.5em}
Since $\sigma$ and $b$ are bounded, it follows from the definition of admissible controls that $\ul F$ satisfies the integrability and Lipschitz continuity assumptions required in \cite{possamai2015stochastic}. We also directly check from the fact that $f$ is bounded, together with \cite[Lemma 6.2]{soner2011martingale} that $\sup_{\P\in\Pc^0_0(\a^1)}
 \E^{\P}\Big[{\rm essup}^{\P}_{0\leq t\leq T}
                 \big(\E^{\P}\big[\int_0^T\big|\ul F(0,\widehat\sigma_s^2)\big|^\kappa \mathrm{d}s
                                        \big| \Fc_{t}^+
                                 \big]
                 \big)^{p/\kappa}
         \Big]
 < \infty$, for some $p>\kappa\ge1$.

\vspace{0.5em}
Then, the dynamic programming requirements of \cite[Assumption 2.1]{possamai2015stochastic} follow from the more general results given in El Karoui and Tan \cite{karoui2013capacities,karoui2013capacities2} (see also Nutz and van Handel \cite{nutz2013constructing}), thanks to Lemma \ref{lem-weak}. Finally, since $\xi$ is bounded, the required well--posedness result is a direct consequence of \cite[Lemma 6.2]{soner2011martingale} together with \cite[Theorems 4.1 and 5.1]{possamai2015stochastic}.

\vspace{0.5em}
Now, the representation for $\ul V_0$ is immediate, see for instance the similar proof in \cite[Proposition 4.6]{cvitanic2015dynamic}.
\qed

\begin{remark}
\label{rem-2BSDE3}
Let us investigate the {\rm 2BSDEs} further under additional regularity of the solution. In particular, this will provide a formal justification of the fact that \reff{K0}  implies \eqref{Isaacs}.  For this purpose, we extend and abuse slightly our earlier notations. Omitting $\o$ as usual, we define
\begin{eqnarray}
\label{Aaextend}
&\Sigma_t(a):=\big(\sigma_t\sigma_t^\top\big)(a),\; {\bf \Sigma}^1_t(a_0):=\big\{\Sigma_t(a_0,a_1),\; a_1\in A_1\big\},\; {\bf \Sigma}^0_t(a_1):=\big\{\Sigma_t(a_0,a_1),\; a_0\in A_0\big\},&\nonumber\\[0.3em]
&A_0(t,\Sigma,a_1):=\big\{a_0\in A_0: \big(\sigma_t\sigma_t^\top\big)(a_0,a_1)=\Sigma\big\},\; A_1(t,\Sigma,a_0):=\big\{a_1\in A_1: \big(\sigma_t\sigma_t^\top\big)(a_0,a_1)=\Sigma\big\},&\\[0.3em]
&\ol F_t(z,\Sigma,a_0):=\sup_{a_1\in A_1(t,\Sigma,a_0)} F_t(z, a_0,a_1),\; \ul F_t(z,\Sigma,a_1):=\inf_{a_0\in A_0(t,\Sigma,a_1)}F_t(z, a_0,a_1).&\nonumber
\end{eqnarray}
 Then one can check straightforwardly that
\begin{equation}
\label{eq:hamil}
\ol H_t(z,\gamma)=\inf_{a_0\in A_0}\sup_{\Sigma\in\mathbf{\Sigma}^1_t(a_0)}\Big\{\frac12{\rm Tr}\big[\Sigma\gamma\big]+\ol F_t(z,\Sigma,a_0)\Big\},\;
 \ul H_t(z,\gamma)=\sup_{a_1\in A_1}\inf_{\Sigma\in\mathbf{\Sigma}^0_t(a_1)}\Big\{\frac12{\rm Tr}\big[\Sigma\gamma\big]+\ul F_t(z,\Sigma,a_1)\Big\}.
 \end{equation}
Assume that the processes $K$ in the definition of the {\rm2BSDEs} is absolutely continuous with respect to the Lebesgue measure (see the formal discussion in {\rm\cite[pp. 21-22]{possamai2015stochastic}}, as well as rigorous arguments in the simpler setting of $G-$expectations in {\rm\cite{peng2014complete}}), and can be written as
\begin{align*}
\frac{\mathrm{d}\ul K^{\a^1}_t}{\mathrm{d}t}&= \frac12{\rm Tr}\big[\widehat{\sigma}_t^2\ul\Gamma_t^{\a^1}\big]+\ul F_t\big(\ul Z_t^{ \a^1},\widehat{\sigma}_t^2,\a^1_t\big)-\inf_{\Sigma \in {\mathbf{\Sigma}^0_t}(\a^1)}\bigg\{\frac12{\rm Tr}\big[\Sigma\ul\Gamma_t^{ \a^1}\big]+\ul F_t\big(\ul Z_t^{ \a^1},\Sigma,\a^1_t\big)\bigg\},\\
\frac{\mathrm{d}\ol K^{\a^0}_t}{\mathrm{d}t}&= \sup_{\Sigma \in {\mathbf{\Sigma}^1_t}(\a^0)}\bigg\{\frac12{\rm Tr}\big[\Sigma\ol\Gamma_t^{ \a^0}\big]+\ol F_t\big(\ol Z_t^{\a^0},\Sigma,\a^0_t\big)\bigg\}-\frac12{\rm Tr}\big[\widehat{\sigma}_t^2\ol\Gamma_t^{\a^0}\big]-\ol F_t\big(\ol Z_t^{ \a^0},\widehat{\sigma}_t^2, \a^0_t\big),
\end{align*}
for some predictable processes $\ul\Gamma^{\a^1}$ and $\ol\Gamma^{\a^0}$. Now given \reff{K0}, \reff{eq:2bsde4} and \reff{eq:2bsde5} reduce to the same {\rm BSDE} under $\widehat \dbP$. Then $\ul Y^{\widehat \a^1} = \ol Y^{\widehat \a^0} =: \widehat Y$,  $\ul Z^{\widehat \a^1} = \ol Z^{\widehat \a^0} =: \hat Z$,  $\widehat\dbP-${\rm a.s.} This would imply further that $\ul \Gamma^{\widehat \a^1} = \ol \Gamma^{\widehat \a^0} =: \widehat \Gamma$.  Then by \reff{K0} again we have
\begin{align*}
\ul H_t({\widehat Z}_t, {\widehat \Gamma}_t)  
&\ge \inf_{\Sigma \in {\mathbf{\Sigma}^0_t}(\widehat\a^1)}\bigg\{\frac12{\rm Tr}\big[\Sigma\ul {\widehat \Gamma}_t\big]+\ul F_t\big({\widehat Z}_t,\Sigma,{\widehat \a^1}_t\big)\bigg\}= \frac12{\rm Tr}\big[\widehat{\sigma}_t^2{\widehat \Gamma}_t\big]+\ul F_t\big({\widehat Z}_t,\widehat{\sigma}_t^2,{\widehat \a^1}_t\big)\\
&= \frac12{\rm Tr}\big[\widehat{\sigma}_t^2{\widehat \Gamma}_t\big]+\ol F_t\big({\widehat Z}_t,\widehat{\sigma}_t^2,{\widehat \a^1}_t\big) = \sup_{\Sigma \in {\mathbf{\Sigma}^1_t}(\widehat\a^0)}\bigg\{\frac12{\rm Tr}\big[\Sigma{\widehat \Gamma}_t\big]+\ol F_t\big({\widehat Z}_t,\Sigma,\widehat\a^0_t\big)\bigg\} \ge \ol H_t({\widehat Z}_t, {\widehat \Gamma}_t).
\end{align*}
This implies \reff{Isaacs} at $({\widehat Z}_t, {\widehat \Gamma}_t)$, and we see that $(\widehat \a^0, \widehat \a^1)$ is a saddle point of the Hamiltonian. 
\end{remark}

\begin{remark}
\label{rem-mixed}
In the spirit of relaxed controls, we may reformulate our game problem by using the notion of mixed strategies, exactly as in S\^irbu {\rm\cite{sirbu2014martingale}}. For $i=0, 1$, let $\cP(A_i)$ denote the set of probability measures on $A_i$, and $m_i: \Th \longrightarrow \cP(A_i)$ be $\dbF-$measurable, $i=0,1$. Let $\dbP$ be a weak solution of the {\rm SDE}
\begin{equation}
\label{Xm}
 X_t =\dis \int_0^t [mb]_s(X_\cd)  \mathrm{d}s  +  \int_0^t [m(\si\si^\top)]^{1/2}_s(X_\cd) \mathrm{d}W_s.
\end{equation}
where $[m \f]_t(\o,\l) := \int_A \f_t(\o, \l, a) m_0(t, \o, da_0) m_1(t, \o, \mathrm{d}a_1)$, for any function $\f_t(\o, \l, a)$ with $\l\in\R^d$. Denote
\begin{equation*}
J_0(m, \dbP) := \dbE^\dbP\bigg[ \xi(X_\cd)+ \int_0^T [m f]_t(X_\cd)  \mathrm{d}t\bigg].
\end{equation*}
Then we can introduce the zero--sum game in the setting in an obvious manner. The advantage of this formulation is that Isaacs's condition always holds $\ol H'_t(\o, z, \g) = \ul H'_t(\o, z, \g)$, where
\begin{equation*}
\dis \ol H'_t(\o, z, \g)  := \inf_{m_0\in \cP(A_0)} \sup_{m_1\in \cP(A_1)} [m h]_t(\o, z, \g),~
\dis  \ul H'_t(\o, z, \g)  := \sup_{m_1\in \cP(A_1)}\inf_{m_0\in \cP(A_0)} [m h]_t(\o, z, \g),
\end{equation*}
are the randomised versions of the upper and lower Hamiltonians. It would be interesting to extend our results to this formulation which does not require Isaacs's condition to hold. See also the contribution of Buckdahn, Li and Quincampoix {\rm\cite{buckdahn2014value}}, who considered a setting similar to Buckdahn and Li {\rm\cite{buckdahn2008stochastic}}, but where the players see each other’s actions with a delay relative to a fixed time grid, and both play mixed delayed strategies. Notice that we always have $\ul H \le \ul H' = \ol H' \le \ol H$, so that when the standard Isaacs's condition \eqref{Isaacs} holds, they are all equal.
\end{remark}

\subsection{Proof of Theorem \ref{thm:2bsde2}}
The wellposedness and the representation for $\ol V_0$ are proved as in the previous section. For $\a^1\in\ol\Ac^1$, the 2BSDE
\[ \ul Y^{\a^1}_t
 =
 \xi+\int_t^T\ul g_s\big(\ul Z_s^{\a^1},\widehat{\sigma}^2_s,\a^1_s\big)\mathrm{d}s
 -\int_t^T\ul Z^{\a^1}_s\cdot \mathrm{d}X_s-\int_t^T\mathrm{d} \ul K^{\a^1}_s,
 ~\P-\mbox{a.s. for all}~\P\in\Pc.\]
induces the following representation 
\[\inf_{\P\in\Pc}\E^{\P}\Big[\ul Y_0^{\a^1}\Big]=\inf_{\a^0\in \cA^0_0}\ul J_0(\alpha).\]
Then, the comparison theorem for 2BSDEs, see \cite[Theorem 4.3]{possamai2015stochastic} implies that $\ul Y_t^{\a_1}\leq Y_t$. By an obvious extension of the argument of El Karoui, Peng and Quenez \cite[Corollary 3.1]{el1997backward} to 2BSDEs, we deduce the desired result. Finally, the existence of a value is now immediate when $\ul G=\ol G$.

\section{Appendix: proof of Theorem \ref{thm-qsSDE}}
\label{sect:Appendix}
\setcounter{equation}{0}

We start with the continuous coefficients setting, where the result looks standard. We nevertheless provide a detailed proof for completeness.

\begin{lemma}
\label{lem-contSDE}
Assume $b:\Th \longrightarrow \dbR^d$ and $\si: \Th\longrightarrow \dbS^d$ are bounded, $\dbF-$measurable, and for each $t$, $b(t,\cd)$ and $\si(t,\cd)$ are continuous. Then the {\rm SDE} of Theorem \ref{thm-qsSDE} has a weak solution
\end{lemma}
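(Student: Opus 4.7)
The plan is to build an Euler-type time-discretisation, extract a weak limit by tightness, and identify the limit as a solution of the martingale problem associated to $(b,\sigma)$. On a reference probability space carrying a $d$-dimensional Brownian motion $W$ under $\dbP_0$, for each $n$ set $t^n_k := kT/n$, $X^n_0 := 0$, and extend recursively on $[t^n_k, t^n_{k+1}]$ by
\[
X^n_t := X^n_{t^n_k} + b\bigl(t^n_k, X^n_{\cdot\wedge t^n_k}\bigr)(t - t^n_k) + \sigma\bigl(t^n_k, X^n_{\cdot\wedge t^n_k}\bigr)\bigl(W_t - W_{t^n_k}\bigr).
\]
Since at every step the coefficients are frozen on the already-constructed past, $X^n$ is a well-defined continuous adapted process; the uniform bound $|b|, |\sigma| \le L$ and the Burkholder--Davis--Gundy inequality yield $\dbE^{\dbP_0}[|X^n_t - X^n_s|^4] \le C_L (t-s)^2$ uniformly in $n$. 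Kolmogorov's criterion forces the laws $\dbP^n := \dbP_0 \circ (X^n)^{-1}$ to be tight on $\Omega$, so that along a subsequence $\dbP^n \Rightarrow \dbP$ for some probability measure $\dbP$ on $\Omega$.

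To identify $\dbP$ as a weak solution, I would show that for every $\varphi \in C^2_b(\dbR^d)$,
\[
M^\varphi_t := \varphi(X_t) - \int_0^t\!\Bigl(b_s(X_\cdot)\cdot D\varphi(X_s) + \tfrac{1}{2}{\rm Tr}\bigl[\sigma^2_s(X_\cdot) D^2\varphi(X_s)\bigr]\Bigr)\mathrm{d}s
\]
is a $\dbP$-martingale. Denoting $\eta_n(s) := \lfloor ns/T\rfloor T/n$, It\^o's formula applied to $X^n$ shows that the frozen analogue $M^{\varphi,n}$, obtained by replacing $s$ with $\eta_n(s)$ inside $b_s,\sigma^2_s$ and $X_\cdot$ with $X^n_{\cdot \wedge \eta_n(s)}$, is a $\dbP^n$-martingale. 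Hence it suffices to verify that, for all $s<t$ and every bounded continuous $\cF_s$-measurable functional $\Psi$,
\[
\dbE^{\dbP^n}\bigl[(M^{\varphi,n}_t - M^{\varphi,n}_s)\Psi\bigr] \longrightarrow \dbE^{\dbP}\bigl[(M^\varphi_t - M^\varphi_s)\Psi\bigr].
\]

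This splits into two ingredients. By the hypotheses, for each fixed $s$ the map $\omega \mapsto b_s(\omega)\cdot D\varphi(\omega_s) + \tfrac{1}{2}{\rm Tr}[\sigma^2_s(\omega)D^2\varphi(\omega_s)]$ is bounded and continuous on $\Omega$; bounded convergence promotes this to continuity of the integrated functional $\omega \mapsto \int_0^t[\cdots]\mathrm{d}s$, so the Portmanteau theorem handles the weak-convergence passage once the frozen arguments have been unfrozen. The remaining discretisation error is controlled by the estimate $\|X^n_{\cdot\wedge s} - X^n_{\cdot \wedge \eta_n(s)}\|_\infty \le L(s - \eta_n(s)) + L\sup_{\eta_n(s)\le r\le s}|W_r - W_{\eta_n(s)}|$, which tends to zero in $\dbP_0$-probability uniformly in $s$; combined with the pathwise continuity of $b(s,\cdot)$, $\sigma(s,\cdot)$ and the uniform boundedness (delivering uniform integrability), the difference between frozen and unfrozen integrands vanishes in $L^1(\dbP^n)$ as $n\to\infty$. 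Once $\dbP$ solves the martingale problem, a classical extension argument produces a Brownian motion $W^{\dbP}$ on a possibly enlarged space such that $X_t = \int_0^t b_s(X_\cdot)\mathrm{d}s + \int_0^t \sigma_s(X_\cdot)\mathrm{d}W^{\dbP}_s$, $\dbP$-a.s., which is the desired weak solution.

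The principal obstacle is the joint time/path limit in the martingale identity: no quantitative modulus of regularity in time is assumed for $b$ and $\sigma$, and their dependence on $X$ is merely pointwise-continuous in the path, so the argument cannot rely on any Lipschitz or H\"older estimate and must instead combine weak convergence of paths with pointwise pathwise continuity of the coefficients, controlling everything through dominated convergence. All other steps --- tightness, extraction of a subsequence, passage to the limit in bounded continuous functionals, and recovery of the Brownian motion from the martingale problem --- are routine consequences of the boundedness of $b$ and $\sigma$.
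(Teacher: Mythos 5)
Your overall strategy (Euler scheme, tightness, identification of the limit via the martingale problem) is the same as the paper's, but there is a genuine gap in the way you discretise. Your scheme freezes \emph{both} the path and the time argument of the coefficients: on $[t^n_k,t^n_{k+1}]$ you use $b(t^n_k, X^n_{\cd\wedge t^n_k})$ and $\si(t^n_k, X^n_{\cd\wedge t^n_k})$. The lemma, however, assumes no regularity of $b$ and $\si$ in the time variable beyond joint measurability; continuity is only assumed in $\o$ for each \emph{fixed} $t$. Consequently $b(\eta_n(s),\cd)$ need not converge to $b(s,\cd)$ in any sense, and the "frozen minus unfrozen" error you claim vanishes in $L^1$ does not vanish in general. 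A concrete counterexample: take $b(t,\o):=\1_{\dbQ}(t)\,v$ for a fixed vector $v$ and $\si$ constant, with $T$ rational. This $b$ is bounded, progressively measurable, and (trivially) continuous in $\o$ for each $t$, and the true SDE has zero drift since $\dbQ$ is Lebesgue-null; yet every grid point $t^n_k=kT/n$ is rational, so your scheme produces $X^n_t=vt+\si W_t$, whose limit solves the wrong equation. The paper avoids this by freezing only the path and letting the time argument run, i.e.\ it uses $\int_{t_i}^t b(s, X^n_{\cd\wedge t_i})\,\mathrm{d}s$, so that the approximating coefficient $b_n(s,\o)=b(s,\o_{\cd\wedge t_i})$ converges to $b(s,\o)$ for each $s$ purely by continuity in $\o$.

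A secondary, more reparable issue: even after correcting the scheme, your passage from "$\|X^n_{\cd\wedge s}-X^n_{\cd\wedge\eta_n(s)}\|_\infty\to0$ in probability" plus "pointwise continuity of $b(s,\cd)$" to $L^1$-convergence of the integrands is not automatic, because the paths at which you evaluate $b(s,\cd)$ move with $n$; mere continuity at each fixed path does not control $|b(s,\o^n)-b(s,\widetilde\o^n)|$ along two varying sequences. One needs to localise on compact subsets of $\O$ (supplied by tightness, and stable under stopping $\o\mapsto\o_{\cd\wedge t}$), where continuity upgrades to uniform continuity; this is exactly the role of the sets $E_m$ and the moduli $\rho_{r,m}$ in the paper's proof, and your write-up should make this step explicit rather than attributing the convergence to dominated convergence alone.
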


\proof Assume $W$ is an $\dbP_0-$Brownian motion. For $n\ge 1$, denote $t_i:= {\frac in}T$, $i=0,\cds, n$, and define
\begin{equation*}
X^n_{t_0}:= 0,\; X^n_t := X^n_{t_i} + \int_{t_i}^t b(s, X^n_{\cd\wedge t_i}) \mathrm{d}s +  \int_{t_i}^t \si(s, X^n_{\cd\wedge t_i}) \mathrm{d}W_s, \; t\in [t_i, t_{i+1}],\; i=0,\cds, n-1.
\end{equation*}
Define $\dbP_n:= \dbP_0 \circ (X^n)^{-1}$.  Then $\dbP_n \subset \cP_L$ for $L$ large enough, and $\dbP_n$ is a weak solution to the SDE of Theorem \ref{thm-qsSDE} with the coefficients $b_n(t,\o) :=  b(t, \o_{t^n_i\wedge \cd}),$ and $\si_n(t,\o) :=  \si(t, \o_{t^n_i\wedge \cd})$, $t\in [t_i, t_{i+1}]$, $i=0,\cds, n-1$.
Note that $b_n,\; \si_n$ are uniformly bounded. By Zheng \cite{zheng1985tightness}, 
$\{\dbP_n\}_{n\ge 1}$ has a weakly convergent subsequence, and for notational simplicity we assume $\dbP_n \longrightarrow \dbP$ weakly. Now it suffices to verify that $\dbP$ is a weak solution to the SDE of Theorem \ref{thm-qsSDE}. For this purpose, we first recall that by Zhang \cite[Lemmata 9.2.4 (i) and (9.2.18)]{zhang2017backward}
\begin{equation}
\label{compact}
\left.\begin{array}{c}
\dis \mbox{there exist $\{E_m\}_{m\ge 1} \subset \cF_T$ such that each $E_m$ is compact and $\sup_{\dbP\in \cP_L} \dbE[E_m^c] \le 2^{-m}$,}\\[0.6em]
\dis \mbox{and for each $m$ and each $\o\in E_m$, we have $\o_{t\wedge \cd}\in E_m$ for all $t\in [0, T]$.}
\end{array}\right.
\end{equation}
 Denote 
\begin{equation}
\label{MN}
M_t := X_t - \int_0^t b(s, X_\cd) \mathrm{d}s,\; N_t := M_t M_t^\top - \int_0^t \si^2(s, X_\cd) \mathrm{d}s,
\end{equation}
and define $M^n, N^n$ by replacing $(b, \si)$ above with $(b_n, \si_n)$. Then it is equivalent to prove that $\P$ is a weak solution to the SDE of Theorem \ref{thm-qsSDE}, and that $M$ and $N$ are $\dbP-$martingales.  First, for any $s<t$ and any $\eta\in C^0_b(\cF_s)$, by the definition of $\dbP_n$, we have $\dbE^{\dbP_n}[ (M^n_t-M^n_s)\eta_s] = 0$. Note that
$M_t-M_s = X_t - X_s - \int_s^t b(r, X_\cd) \mathrm{d}r$. Since $b(r, \cd)$ is continuous for each $r$, by the weak convergence of $\dbP_n$, together with the bounded convergence theorem (under the Lebesgue measure), we have
\begin{equation*}
\lim_{n\to \infty} \dbE^{\dbP_n}\bigg[\eta_s \int_s^t b(r, X_\cd) \mathrm{d}r\bigg] =  \dbE^{\dbP}\bigg[\eta_s\int_s^t b(r, X_\cd)\mathrm{d}r\bigg].
\end{equation*}
Moreover, for any $R >0$, denote by $I_R(x)$ the truncation of $x$ by $R$ and $X_{s,t}:=X_t-X_s$, we have
\begin{align*}
&\lim_{n\to \infty} \dbE^{\dbP_n}\Big[ I_R(X_{s,t}) \eta_s\Big] =  \dbE^{\dbP}\Big[ I_R(X_{s,t}) \eta_s\Big],
\sup_{\dbP'\in \cP_L} \dbE^{\dbP'} \Big[|(X_{s,t}) - I_R(X_{s,t})|^2\Big] 
\le 
{\frac1R} \sup_{\dbP'\in \cP_L} \dbE^{\dbP'} \Big[|X_{s,t}|^3\Big] \le {\frac CR}. 
\end{align*}
Denoting similarly $M_{s,t}:=M_t-M_s$ and $M^n_{s,t}:=M^n_t-M^n_s$, wee see that
\begin{align}
\label{EPM}
 \dbE^{\dbP}\big[ M_{s,t}\eta_s\big] 
   &
   = 
   \lim_{n\to \infty} \dbE^{\dbP_n}\big[ M_{s,t}\eta_s\big] 
   = 
   \lim_{n\to \infty} \dbE^{\dbP_n}\big[ \big(M_{s,t}-M^n_{s,t}\big)\eta_s\big]
   \nonumber\\
 & =\lim_{n\to \infty}  \dbE^{\dbP_n}\Big[ \eta_s \int_s^t [b(r, X_\cd) - b_n(r, X_\cd)]\mathrm{d}r\Big] =  \lim_{n\to \infty} \int_s^t \dbE^{\dbP_n}\big[ \eta_s  \big(b(r, X_\cd) - b_n(r, X_\cd)\big)\big]\mathrm{d}r .
\end{align}
Now for each $r\in [s, t]$ and $m\ge 1$, since $b(r,\cd)$ is continuous and $E_m$ is compact, $b(r,\cd)$ is uniformly continuous on $E_m$ with a certain modulus of continuity function $\rho_{r,m}$. Then, by the definition of $b_n$ and \eqref{compact}
\begin{align*}
&\Big|\dbE^{\dbP_n}\Big[ \eta_s \big(b(r, X_\cd) - b_n(r, X_\cd)\big)\Big]\Big| \le C\dbE^{\dbP_n}\Big[ |b(r, X_\cd) - b_n(r, X_\cd)|\1_{E_m}\Big] + C\dbP_n[E_m^c]\\
&\le  C\dbE^{\dbP_n}\Big[\rho_{r,m}\big(\mbox{OSC}_{2^{-n}}(X)\big)\Big] + C2^{-m} \le  C\dbE^{\dbP_n}\Big[\rho_{r,m}\big(\mbox{OSC}_{2^{-k}}(X)\big)\Big] + C2^{-m},
\end{align*}
for any $k \le n$, where $\mbox{OSC}_\delta (X) := \sup_{t_1, t_2: |t_1-t_2|\le \delta} |X_{t_1}-X_{t_2}|$. Fix $m, k$ and send $n\longrightarrow \infty$, we deduce
\begin{equation*}
\limsup_{n\to\infty}\Big|\dbE^{\dbP_n}\Big[ \eta_s \big(b(r, X_\cd) - b_n(r, X_\cd)\big)\Big]\Big|\le C\dbE^{\dbP}\Big[\rho_{r,m}\big(\mbox{OSC}_{2^{-k}}(X)\big)\Big] + C2^{-m}.
\end{equation*}
By first sending $k\longrightarrow\infty$ and then $m\longrightarrow \infty$, we have $\lim_{n\to\infty}\dbE^{\dbP_n}\big[ \eta_s \big(b(r, X_\cd) - b_n(r, X_\cd)\big)\big] =0$, and by the bounded convergence theorem, it follows from \eqref{EPM} that $\dbE^{\dbP}[ M_{s,t}\eta_s]=0$, i.e. $M$ is an $\dbP-$martingale. Similarly one can show that $N$ is an $\dbP-$martingale. Therefore, $\dbP$ is a weak solution to the SDE of Theorem \ref{thm-qsSDE}.
\qed

\vspace{0.5em}

\no {\bf Proof of Theorem \ref{thm-qsSDE}}
For $\eps>0$, let $\O^\eps_t$ be a common set for $(b, \si)$ as in Definition \ref{qscont}.  By Zhang \cite[Problem 10.5.3]{zhang2017backward}, there exists $(b_\eps, \si_\eps)$ such that $(b_\eps, \si_\eps)(t,\cd)$ agree with $(b, \si)(t, \cd)$ on $\O^\eps_t$ and are continuous for each $t$. Moreover, by the construction in \cite[Problem 10.5.3]{zhang2017backward}, it follows from the progressive measurability in Definition \ref{qscont} $(i)$, that $b_\eps,$ $\si_\eps$ are $\dbF-$progressively measurable. By Lemma \ref{lem-contSDE}, let $\dbP_\eps$ be a weak solution to the SDE of Theorem \ref{thm-qsSDE} with coefficients $(b_\eps, \si_\eps)$. Similarly to Lemma \ref{lem-contSDE}, there exists $\eps_n \longrightarrow 0$ such that $\dbP_{\eps_n}$ converges to some $\dbP\in \cP$ weakly.  Recall \eqref{MN} and define $M^{\eps}$ and $N^{\eps}$ by replacing $(b, \si)$ above with $(b_\eps, \si_\eps)$. Then, for any $s<t$ and $\eta_s \in C^0_b(\cF_s)$, following similar arguments as in Lemma \ref{lem-contSDE} we have, for any $m\ge 1$,
$$
\dbE^{\dbP}\big[M^{\eps_m}_{s,t}\eta_s\big] 
= 
\lim_{n\to \infty} \dbE^{\dbP_{\eps_n}}\big[M^{\eps_m}_{s,t}\eta_s\big] 
= \lim_{n\to \infty} \dbE^{\dbP_{\eps_n}}\big[\big(M^{\eps_m}_{s,t} -M^{\eps_n}_{s,t}\big) \eta_s\big]
=  \lim_{n\to \infty} \dbE^{\dbP_{\eps_n}}\Big[\eta_s \int_s^t [b_{\eps_m} - b_{\eps_n}](r, X_\cd)dr\Big].
$$
Thus
\begin{align*}
\big| \dbE^{\dbP}\big[M_{s,t}\eta_s\big]\big| 
&\le 
\big| \dbE^{\dbP}\big[M^{\eps_m}_{s,t}\eta_s\big]\big| 
+ \big| \dbE^{\dbP}\big[\eta_s\big(M_{s,t}- M^{\eps_m}_{s,t}\big)\big]\big|\\
&\le  C\liminf_{n\to\infty} \dbE^{\dbP_{\eps_n}}\bigg[\int_s^t |[b_{\eps_m} - b_{\eps_n}](r, X_\cd)|\mathrm{d}r\bigg] + C\dbE^{\dbP}\bigg[\int_s^t |[b_{\eps_m} - b](r, X_\cd)|\mathrm{d}r\bigg] \\
&\le C\liminf_{n\to\infty} \dbE^{\dbP_{\eps_n}}\bigg[\int_s^t [\1_{(\O^{\eps_n}_r)^c}+\1_{(\O^{\eps_m}_r)^c}]\mathrm{d}r\bigg]  + C\dbE^{\dbP}\bigg[\int_s^t \1_{(\O^{\eps_m}_r)^c}\mathrm{d}r\bigg]
\;\le\; C\eps_m.
\end{align*}
Since $m$ is arbitrary, we have $\dbE^{\dbP}\big[M_{s,t}\eta_s\big]=0$ for all $\eta_s\in C^0_b(\cF_s)$. That is, $M$ is an $\dbP-$martingale. Similarly, $N$ is an $\dbP-$martingale, so that $\dbP$ is a weak solution to the SDE of Theorem \ref{thm-qsSDE}.
\qed

  \bibliographystyle{plain}
  \small
\bibliography{bibliographyDylan}

\end{document}